\DeclareMathOperator{\rk}{rank}
\theoremstyle{plain}
\newtheorem{theorem}{Theorem}[section]
\newtheorem*{theorem*}{Theorem}
\newtheorem{proposition}[theorem]{Proposition}
\newtheorem{corollary}[theorem]{Corollary}
\newtheorem{lemma}[theorem]{Lemma}
\theoremstyle{definition}
\newtheorem{definition}[theorem]{Definition}
\newtheorem{remark}[theorem]{Remark}
\newtheorem{example}[theorem]{Example}
\newcommand{\enm}[1]{\ensuremath{#1}}          %
\newcommand{\cal}[1]{\mathcal{#1}}
\renewcommand{\bar}[1]{\overline{#1}}
\newcommand{\CC}{\enm{\mathbb{C}}}
\newcommand{\NN}{\enm{\mathbb{N}}}
\newcommand{\RR}{\enm{\mathbb{R}}}
\newcommand{\ZZ}{\enm{\mathbb{Z}}}
\newcommand{\FF}{{\enm{\mathbb{F}}}}
\newcommand{\JJ}{\enm{\mathbb{J}}}
\newcommand{\Cc}{\enm{\cal{C}}}
\newcommand{\Dd}{\enm{\cal{D}}}
\newcommand{\Ll}{\enm{\cal{L}}}
\newcommand{\Oo}{\enm{\cal{O}}}
\newcommand{\Pp}{\enm{\cal{P}}}
\newcommand{\Rr}{\enm{\cal{R}}}
\newcommand{\Ss}{\enm{\cal{S}}}
\newcommand{\Vv}{\enm{\cal{V}}}
\DeclareMathOperator{\bdeg}{\mathrm{bdeg}}
\renewcommand{\phi}{\varphi}
\renewcommand{\theta}{\vartheta}
\renewcommand{\epsilon}{\varepsilon}
\def\8{\infty}
\def\OF{\Oo_\FF}
\def\Of{\Oo}
\def\TT{T}
\def\es{\varnothing}
\def\ge{\geqslant}
\def\le{\leqslant}
\def\cd{\cdot}
\def\si{\sigma}
\def\De{\Delta}
\def\PP{\mathbb{P}}
\def\CP{{\PP^2}}
\def\CPv{{\PP^{2\vee}}}
\def\CPCPv{{\CP\times\CPv}}
\def\Gr{\mathbb{G}\mathrm{r}}
\def\PIqm{\pi_1^{-1}(q)\cap\pi_2^{-1}(m)}
\def\PUqm{\pi_1^{-1}(q)\cup\pi_2^{-1}(m)}
\def\gFS{g_\mathrm{FS}}
\def\Re{\mathop{\mathrm{Re}}}
\def\Im{\mathop{\mathrm{Im}}}
\def\ds{\displaystyle}
\def\Hm{H_m}
\def\qH{{}_qH}
\def\la{\lambda}
\def\PGL{\mathop{\mathit{PGL}}}
\def\SL{\mathop{\mathit{SL}}}
\def\SU{\mathop{\mathit{SU}}}
\def\sm{\setminus}
\def\ts{\textstyle}
\def\vs{\vskip10pt}
\def\qbox#1{\quad\hbox{#1}\quad}
\def\lra{\longrightarrow}
\let\oldtocsubsection=\tocsubsection
\renewcommand{\tocsubsection}[2]{\hskip15pt\oldtocsubsection{#1}{#2}}
\begin{document}
\parskip1pt

\title[Twistor geometry of the Flag manifold]{\Large Twistor geometry of the Flag manifold}

\author[A. Altavilla]{Amedeo Altavilla}\address{Dipartimento di Matematica,
  Universit\`a degli Studi di Bari `Aldo Moro', via Edoardo Orabona, 4, 70125,
  Bari, Italia}\email{amedeo.altavilla@uniba.it}

\author[E. Ballico]{Edoardo Ballico}\address{Dipartimento Di Matematica,
  Universit\`a di Trento, Via Sommarive 14, 38123, Povo, Trento, Italia}
\email{edoardo.ballico@unitn.it}

\author[M. C. Brambilla]{Maria Chiara Brambilla}\address{
Universit\`a Politecnica delle Marche, via Brecce Bianche, I-60131 Ancona, Italia}
\email{brambilla@dipmat.univpm.it}

\author[S. Salamon]{Simon Salamon}\address{Department of Mathematics, King's
  College London, Strand, London, WC2R~2LS, UK} \email{simon.salamon@kcl.ac.uk}

\thanks{{\it The authors were partially supported as follows:}\\ \indent AA
  by GNSAGA and the INdAM project `Teoria delle funzioni ipercomplesse
  e applicazioni',\\ \indent MCB by GNSAGA and by the PRIN project `Geometria delle
  variet\`a algebriche',\\ \indent SS by the Simons Foundation
  (\#488635, Simon Salamon)}

\date{\today}

\subjclass[2010]{Primary: 32L25, 14M15; Secondary: 53C15, 53C28, 14J10, 15A21}
\keywords{Twistor space, flag manifold, del Pezzo surface, unitary equivalence}

\begin{abstract} 
A study is made of algebraic curves and surfaces in the flag manifold
$\mathbb{F}=SU(3)/T^2$, and their configuration relative to the
twistor projection $\pi$ from $\mathbb{F}$ to the complex projective
plane $\mathbb{P}^{2}$, defined with the help of an anti-holomorphic
involution $j$. This is motivated by analogous studies of algebraic
surfaces of low degree in the twistor space $\mathbb{P}^3$ of the
4-dimensional sphere $S^4$. Deformations of twistor fibers project to
real surfaces in $\mathbb{P}^{2}$, whose metric geometry is
investigated. Attention is then focussed on toric del Pezzo surfaces
that are the simplest type of surfaces in $\mathbb{F}$ of bidegree
$(1,1)$. These surfaces define orthogonal complex structures on
specified dense open subsets of $\mathbb{P}^{2}$ relative to its
Fubini-Study metric. The discriminant loci of various surfaces of
bidegree $(1,1)$ are determined, and bounds given on the number of
twistor fibers that are contained in more general algebraic surfaces
in $\mathbb{F}$.
\end{abstract}

\maketitle
\setcounter{tocdepth}{1} 
\tableofcontents

\section{Introduction}

The purpose of this paper is to study the complex 3-dimensional flag manifold
$\FF$ and some of the associated geometrical structures arising from its
description as the homogeneous space $SU(3)/T^2$. If we fix an invariant
complex structure on $\FF$ then there are three natural projections from $\FF$
to the complex projective plane $\CP$, one of which (call it $\pi$) is neither
holomorphic nor anti-holomorphic. The resulting three fibrations play an
implicit role in the classification of harmonic maps of surfaces into the
complex projective planes $\CP$ \cite{es,ew}, though in this paper, we shall
be more concerned with real branched coverings of $\CP$ defined by the choice
of an algebraic surface in $\FF$.

Let $p$ be a point of $\CP$ and $\ell$ a line in $\CP$. The pair $(p,\ell)$
defines a point of $\FF$ if $p\in\ell$. We can regard a line $\ell$ in $\CP$ as
a point in the dual complex projective plane $\CPv$, so that $\FF$ is naturally
an algebraic subvariety of $\CPCPv$. This is the standpoint that we adopt in
the early sections of this paper, in which our notation exploits to a maximum
the underlying elementary linear algebra. For example, the relation $p\in\ell$
is equivalent to the vanishing of the pairing $p\ell$, and the line through two
distinct points $p,q$ can be represented by a cross product $p\times
q$. Section \ref{prelim} uses the resulting double fibration to compute Hodge
numbers defined by line bundles $\Oo(a,b)$. This also enables us to associate a
bidegree to both curves and surfaces in $\FF$.

Section \ref{c&s} focusses attention on the most basic families of curves and
surfaces in the flag variety $\FF$. A family $\Vv$ of curves $L_{q,m}$ of
bidegree $(1,1)$ is parametrized by the complement of $\FF$ in $\CPCPv$, and
realizes each element of $\Vv$ as the intersection $\qH\cap\Hm$ of two
Hirzebruch surfaces of type $1$. Each of these surfaces can, merely by their
description as a subvariety of $\FF$, be viewed simultaneously as a $\PP^1$
bundle over $\PP^1$ and as the blowup of $\CP$ at one point. An arbitrary
smooth curve of bidegree $(1,1)$ has the form $L_{q,m}$ for some $(q,m)$ with
$qm\ne0$. We study intersections between members of this family and various
Hirzebruch surfaces.
    
Section \ref{Kronecker} deals with the classification of surfaces $S$ in $\FF$
of bidegree $(1,1)$, each of which corresponds to a complex $3\times3$ matrix
$A$ up to the addition of a scalar multiple of the identity and
rescaling. There is an analogy with the simultaneous diagonalization of
quadratic forms, but in general $A$ will not be diagonalizable, which leads to
singular and reducible examples. Indeed, the classification of all such
surfaces provides a geometrical illustration of Jordan canonical form in the
simplest of cases. We then proceed to study the equivalence of such surfaces
under unitary transformations.

We begin to examine the twistor picture in Section \ref{Twy}. The
Hermitian structure allows one to associate to $p$ a line
$p^*\in\CPv$, and to $\ell$ a point $\ell^*\in\CP$. The anti-linear
involution $j\colon(p,\ell)\mapsto (\ell^*,p^*)$ of $\FF$ has no fixed
points. We can then define $\pi$ by mapping $(p,\ell)$ to the point of
$\CP$ determined by $p^*\times\ell$. Then $\pi$ commutes with $j$, and
exhibits $\FF$ as the twistor space of the complex projective plane
$\CP$ with its standard (self-dual) Fubini-Study metric. The
distinction between $\CP$ and $\CPv$ is now less important, and we
obtain a triple $\pi_1,\pi,\pi_2^*$ of fibrations of $\FF\to\CP$. They
are permuted by means of outer automorphisms (of which $j$ is one)
arising from the Weyl group of $\SU(3)$, but are distinguished by our
choice of complex structure on $\FF$.

Local sections $s$ of $\pi$ parametrize almost complex structures $J$ on open
sets of $\CP$, and a fundamental property of a twistor space asserts that the
image of $s$ is holomorphic if and only if $J$ is complex, i.e., its Nijenhuis
tensor vanishes. The involution $j$ maps $J$ to $-J$ and, in the twistor
context, $j$-invariant objects are called `real'. In the analogous situation of
the Penrose fibration $\PP^3\to S^4$, there has been extensive study of
algebraic surfaces in $\PP^3$ and their associated orthogonal complex
structures in domains of $S^4$ \cite{sv1,altavillaballico1, altavillaballico2, altavillaballico3, altavillasarfatti, chirka, armstrong, APS, gensalsto, sv2}.

Section \ref{Twn} explains the relevance of the basic geometry of curves and
surfaces to the twistor theory. The fibers of the twistor fibration $\pi$ form
a real subfamily of $\Vv$, whereas a generic smooth curve $L_{q,m}$ projects to
a surface of revolution, whose first fundamental form (induced from the
Fubini-Study metric) we identify. As $q$ approaches the line $m$, the image
predictably acquires a dumbbell shape, reflecting the degeneration of $L_{q,m}$
to two lines. The underlying $U(1)$ symmetry enables us to visualize this in
Figure \ref{rot}, and other relevant surfaces of revolution are constructed in
Section \ref{ramf} and displayed in Figure \ref{disc}.

If a complex $3\times3$ matrix $A$ has three distinct eigenvalues, then the
associated surface $S$ in $\FF$ is a smooth del Pezzo surface of degree 6. Any
such surface is invariant by the action of the maximal torus $T^2$ of diagonal
matrices in $SU(3)$, and this allows us to use toric methods to describe its
behaviour relative to the twistor projection. Such surfaces $S$ have bidegree
$(1,1)$, and are the analogues of quadrics in $\PP^3$. For example, $S$ is
$j$-invariant if and only if $A$ is Hermitian, and in this case $S$ contains a
family of twistor fibers parametrized by a circle. It then becomes a natural
problem to understand the configuration of such a surface relative to $\pi$,
and to determine how many fibers of $\pi$ it can contain.

The problem of determining the branch locus of certain (real or toric) surfaces
of bidegree $(1,1)$ relative to $\pi$ is considered in Section \ref{Tw1}. In
Section \ref{infinity}, we learn that a $(1,1)$ surface can contain zero, one,
two, or (if real) infinitely many twistor fibers. This is first proved by
B\'ezout-type methods, and then more explicitly. All these cases are realized
by various examples. The final Section \ref{ramf} describes the twistor fibers
and branch loci of smooth but non-real $(1,1)$ surfaces.\smallskip

We conclude with some observations that will not be pursued in this
paper, but which suggest alternative approaches to, and
generalizations of, our work.

There are close analogues of our results with those of \cite{sv1} on
the Penrose fibration $\PP^3\to S^4$. This is to be expected since the
twistor spaces $\PP^3$ and $\FF$ incorporate an open orbit of a
complex Heisenberg group, and are birationally equivalent, a fact that
extends to any two Wolf spaces of the same dimension \cite{Bur}. From
the twistor viewpoint, surfaces of bidegree $(1,1)$ in $\FF$ evidently
correspond to quadrics in $\PP^3$; this is the conclusion of the
investigation in Section \ref{ramf}. We expect surfaces of bidegree
$(2,1)$ or $(1,2)$ (`del Pezzo double planes') to relate to cubic
surfaces in $\PP^3$. The latter can contain at most 5 twistor lines
(and some do) \cite{sv2}, and according Corollary
\ref{number-twistorlines} the former can contain at most 6 twistor
lines (but this may not be optimal). On the other hand, the flag
variety is a richer environment in which to study surfaces, owing to
its natural fibrations to planes. For example, surfaces of bidegree
$(2,2)$ in $\FF$ are K3 surfaces for which the non-commuting
involutions arising from $\pi_1$ and $\pi_2$ give rise to non-trivial
dynamics \cite{Weh}.

The sphere $S^4$ admits an infinite series of $SO(3)$-invariant
self-dual Einstein metrics $g_k$ ($k\ge3$) on $S^4$ with an orbifold
singularity with cone angle $2\pi/(k-2)$ along an embedded Veronese
surface. These metrics, along with their twistor spaces $Z_k$, were
described by Hitchin \cite{hitchin4}. In the case $k=4$, the orbifold
is the global quotient of $\CP$ (equipped with its Fubini-Study
metric) by complex conjugation, and $Z_4$ is a secant variety in
$\PP^4$, see Remark \ref{secant}.

For a generalization of the approach of Section \ref{Twy} and triple
fibrations in the context of $\mathit{Spin}(7)$ and triality, we cite
\cite{MW}.

\section{Preliminaries}\label{prelim}

In this section, we set up notation that will allow us to work with the flag
manifold $\FF$.

Throughout the paper, we denote by $\PP^n$ the complex projective space
$\PP(\CC^{n+1})$, and by $\PP^{n\vee}$ its dual $\PP((\CC^{n+1})^\vee)$. For
the most part, $n$ will equal $2$. We shall always regard elements of $\CC^3$
as row vectors, and elements of the dual space $(\CC^3)^\vee$ as column
vectors, with transpose also indicated by the superscript ${}^\vee$. We shall
retain this distinction at the level of homogeneous coordinates, so that the
row vector $p=(p_0,p_1,p_2)$ defines $[p_0:p_1:p_2]$ in $\CP$, and the column
vector $\ell=(\ell_0,\ell_1,\ell_2)^\vee$ defines $[\ell_0:\ell_1:\ell_2]^\vee$
in $\CPv$. Assuming our vectors are non-zero, we shall abuse notation by
writing $p\in\CP$ and $\ell\in\CPv$, so that the assertions $p\in\ell$
(geometry) and $p\ell=0$ (algebra) can be used interchangeably.

We study now the bi-projective space $\CPCPv$. Its Segre embedding into $\PP^8$
is induced by the map $(p,\ell)\mapsto \ell p$, in homogeneous coordinates
\[\big(\,[p_0:p_1:p_2],\>[\ell_0:\ell_1:\ell_2]^\vee\big) \longmapsto
\left[\begin{matrix} 
p_0\ell_0 & p_1\ell_0 &p_2\ell_0\\
p_0\ell_1 & p_1\ell_1 &p_2\ell_1\\
p_0\ell_2 & p_1\ell_2 &p_2\ell_2\\
\end{matrix}\right].\]
Its image (the Segre variety) is a fourfold of degree $6$ in $\PP^8$, see
for example \cite{h}. We have a diagram

\[
\begindc{\commdiag}[3]
\obj(0,0)[A]{$\CP$}
\obj(260,0)[Aa]{$\CPv$}
\obj(130,130)[B]{$\CPCPv$}
\mor{B}{A}{$\Pi_1$}[\atright,\solidarrow]
\mor{B}{Aa}{$\Pi_2$}[\atleft,\solidarrow]
\enddc\]\vs

\noindent in which $\Pi_1$ and $\Pi_2$ are the standard projections with
$\Pi_1(p,\ell)=p$ and $\Pi_2(p,\ell)=\ell$. Their fibers are linear sections of
the Segre variety of codimension $2$.

What follows is some relevant algebra. Let $\Rr:=\CC[p_0, p_1, p_2]$ be the
complex vector space of all homogeneous polynomials in the variables $p_0, p_1,
p_2$. Analogously, set $\Rr^{\vee}:=\CC[\ell_0, \ell_1, \ell_2]$.  The spaces
$\Rr$ and $\Rr^{\vee}$ are graded in the usual way, so that $\Rr_{a}$ denotes
the vector space of homogeneous polynomials of degree $a$ in the variables
$p_0, p_1, p_2$.
In view of the Segre embedding, we consider
$\Pp:=\Rr\otimes_\CC\Rr^{\vee}$. Then $\Pp$ is a polynomial ring in the
variables $p_i,\ell_j$, bigraded in the following way. Set $\Pp_{a,b}:=
\Rr_a\otimes_\CC\Rr^\vee_b$ for $(a,b)\in\NN^2$, so that
\begin{equation}\label{Pp}\ts
  \Pp= \bigoplus\limits_{(a,b)\in \NN^2} \Pp_{a,b},\quad \dim \Pp_{a,b}
  =\binom{a+2}2\binom{b+2}2
\end{equation}
Multiplication of polynomials induces a bilinear map $\Pp_{a,b}\times \Pp_{c,d}
\to \Pp_{a+c,b+d}$.

For any $a,b\in\ZZ$, we set $\Oo_\CPCPv(a,b)=\Pi_1^*\Oo_\CP(a)\otimes
\Pi_2^*\Oo_\CPv(b)$. The Leray-Hirsch theorem (or the K\"unneth formula)
implies that $\mathrm{Pic}(\CPCPv)\cong \ZZ^2$ is freely generated as an
abelian group by $\Oo_\CPCPv(1,0)$ and $\Oo_\CPCPv(0,1)$. Since the canonical
line bundle $\omega_\CP$ is isomorphic to $\Oo_\CP(-3)$, we have the expression
\[ \omega_\CPCPv\cong \Oo_\CPCPv(-3,-3).\]
for the canonical bundle of $\CPCPv$.

Next, we compute Hodge numbers using the K\"unneth formula. This gives
\begin{equation}\label{eqp1}
h^i(\Oo_\CPCPv(a,b))=\sum_{j=0}^{i} h^j(\Oo_\CP(a))\,h^{i-j}(\Oo_{\CPv}(b))
\end{equation}
for all $i\in \NN$ and $(a,b)\in \ZZ^2$. From (\ref{eqp1}), Serre duality and
the cohomology of line bundles on $\CP$, we deduce

\begin{lemma}\label{p1}
\begin{enumerate}
\item[(i)] 
$\ds h^0(\Oo_\CPCPv(a,b))=\begin{cases}
\binom{a+2}{2}\binom{b+2}{2}&\mbox{if}\quad(a,b)\in \NN^2\\
\;0&\mbox{if either $a<0$ or $b<0$};
\end{cases}$
\item[(ii)] $h^1(\Oo_\CPCPv(a,b))=0$ for all $(a,b)\in \ZZ^2$;
\item[(iii)] $h^2(\Oo_\CPCPv(a,b))=0$ if either $a\ge 0$ and $b\ge -2$,
  or $a=-1$, or $a\le -2$ and
$b<0$;
\item[(iv)] $h^3(\Oo_\CPCPv(a,b))=0$ for all $(a,b)\in \ZZ^2$;
\item[(v)] $h^4(\Oo_\CPCPv(a,b))=0$ if either $a\ge -2$, or $b\ge -2$.
\end{enumerate}
\end{lemma}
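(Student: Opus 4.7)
The proof is essentially a bookkeeping exercise built on two inputs: the Künneth formula (\ref{eqp1}) (already stated), and the well-known cohomology of line bundles on $\CP$, namely
\[
h^0(\Oo_\CP(n))=\tbinom{n+2}{2}\ \text{if}\ n\ge0,\qquad h^1(\Oo_\CP(n))=0\ \text{for all}\ n,\qquad h^2(\Oo_\CP(n))=\tbinom{-n-1}{2}\ \text{if}\ n\le-3,
\]
with the remaining values zero. The latter follow from the Euler sequence computation, or equivalently from Serre duality $h^2(\Oo_\CP(n))=h^0(\Oo_\CP(-n-3))$. The same holds with $\CP$ replaced by $\CPv$.

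I would then treat the five items in order. Part (i) is immediate: only $j=0$ contributes in (\ref{eqp1}), and the formula collapses to $h^0(\Oo_\CP(a))\cdot h^0(\Oo_{\CPv}(b))$. Part (ii) is equally quick: each term in the Künneth sum contains either an $h^1$ of a line bundle on $\CP$ or on $\CPv$, and both vanish. Part (iv) is similar: the $\CP$ factors have cohomological dimension $2$, so $h^j(\Oo_\CP(a))=0$ for $j=3$, and writing out the sum one sees every term is zero. Part (v) reduces to a single surviving summand, $h^4(\Oo_\CPCPv(a,b))=h^2(\Oo_\CP(a))\,h^2(\Oo_{\CPv}(b))$; by the vanishing criterion above this is zero unless both $a\le-3$ and $b\le-3$, which is exactly the stated range.

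The only item requiring a small case analysis is part (iii). After dropping the vanishing $h^1$ terms, Künneth gives
\[
h^2(\Oo_\CPCPv(a,b))=h^0(\Oo_\CP(a))\,h^2(\Oo_{\CPv}(b))+h^2(\Oo_\CP(a))\,h^0(\Oo_{\CPv}(b)).
\]
The first summand is zero iff $a<0$ or $b\ge-2$; the second is zero iff $a\ge-2$ or $b<0$. I would then check that each of the three regions listed in (iii)---(a) $a\ge0,\ b\ge-2$; (b) $a=-1$; (c) $a\le-2,\ b<0$---forces both of these conditions. For (a), $b\ge-2$ kills the first summand and $a\ge-2$ kills the second; for (b), $a<0$ kills the first and $a\ge-2$ kills the second; for (c), $a<0$ kills the first and $b<0$ kills the second. (The same conclusion holds by symmetry with the roles of $a$ and $b$ swapped, though this is not needed for the stated ranges.)

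There is no real obstacle here: the only thing to watch is the asymmetric formulation of (iii), which on first reading looks like it might miss a region, but in fact covers exactly the set where both Künneth summands simultaneously vanish. I would close by remarking that outside the ranges stated in (iii) and (v), the cohomology is generally nonzero, so the lemma is sharp in the sense needed for later bidegree computations.
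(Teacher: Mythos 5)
Your proof is correct and follows exactly the route the paper intends: the paper gives no explicit argument beyond citing the K\"unneth formula \eqref{eqp1}, Serre duality, and the cohomology of line bundles on $\CP$, and your write-up simply fills in those omitted bookkeeping details. The case analysis in part (iii), verifying that each of the three stated regions kills both K\"unneth summands, is accurate, and the observations for (ii), (iv), (v) about which factors vanish for degree reasons are precisely the computations the paper implicitly relies on.
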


These results will be refined in the next subsection.

\subsection{The flag manifold}
We next define the main object of study.

\begin{definition}
The \textit{flag manifold} is the algebraic subvariety of $\CPCPv$ given by 
\[\FF:=\big\{(p,\ell)\in\CPCPv\;|\;p\in\ell\big\}.\]
\end{definition}

\noindent Since the condition $p\in\ell$ is equivalent to $p\ell=0$, we
have the coordinate description
\[ \FF:=\{([p_0:p_1:p_2],\>[\ell_0:\ell_1:\ell_2]^\vee)\in\CPCPv\;\big|\;
p_0\ell_0+p_1\ell_1+p_2\ell_2=0\}.\] In future, we shall favour the algebraic
way of expressing incidence. We shall denote the restrictions of the standard
projections to the flag manifold in lower case: $\pi_i:=\Pi_i|_\FF$ for
$i=1,2$. The two maps $\pi_1:\FF\to\CP$ and $\pi_2:\FF\to\CPv$ are locally
trivial $\PP^1$-bundles. In particular, $\FF=\PP(\Omega^1_\CP(1))$ with
$\pi_1:\PP(\Omega^1_\CP(1))\to\CP$ the natural projection as a $\PP^1$-bundle.
 
The fibers of $\pi_1$ and $\pi_2$ can easily be described explicitly. Let
$q\in\CP$ and $m\in\CPv$. Then
\begin{align*}
  \pi_1^{-1}(q) &= \{(q,\ell)\in \FF\mid q\ell=0\},\\
  \pi_2^{-1}(m) &= \{(p,m)\in \FF\mid pm=0\}
\end{align*}
are linear sections of codimension $3$ and so smooth rational curves. Observe
that $\PIqm\ne\es$ if and only if $qm=0$.

With notation from the previous subsection, we can regard $p\ell =
p_0\ell_0+p_1\ell_1+p_2\ell_2$ as an element of $\Pp_{1,1}$. Set
$\Ss:=\Pp/(p\ell)$. Since $\Ss$ is the quotient of a bigraded polynomial ring
by a principal ideal generated by a bi-homogeneous polynomial,
\begin{equation}\label{Ss}\ts
  \Ss = \bigoplus\limits_{a,b} \Ss_{a,b}
\end{equation}
is also bi-graded. Here, $\Ss_{a,b}$ is a complex vector space, and
multiplication in the ring $\Ss$ induces a bilinear map $\Ss_{a,b}\times
\Ss_{c,d} \to \Ss_{a+c,b+d}$.

Set
\[\OF(a,b):= \pi_1^*\Oo_\CP(a)\otimes\pi_2^*\Oo_\CPv(b).\]
The Leray-Hirsch theorem applied to the map
$\pi_1:\PP(\Omega^1_\CPv(1)) \to \CPv$ implies that any line bundle on
$\FF$ is isomorphic to $\OF(a,b)$ for a unique $(a,b)\in\ZZ^2$.  Since
$\FF$ is an effective divisor of $\CPCPv$, we can apply this statement
to the line bundle that generates $\FF$. Indeed, since the fibers of
$\Pi_i$ are linear sections of the Segre embedding of $\CPCPv$, we
must have $\FF\in |\Oo_\CPCPv(1,1)|$.

Since $\omega_\CPCPv\cong\Oo_\CPCPv(-3,-3)$, the adjunction formula gives the
expression
\begin{equation}\label{can}
  \omega_\FF\cong\OF(-2,-2)
\end{equation}
for the canonical bundle of the flag manifold.
 
From the exact sequence
\begin{equation}\label{eqp2}
0 \to\Oo_\CPCPv(a-1,b-1)\to \Oo_\CPCPv(a,b)\to\OF(a,b)\to 0
\end{equation}
and Lemma \ref{p1} we get:

\begin{lemma}\label{p2}
\begin{enumerate}
\item[(i)]
$\ds h^0(\OF(a,b))=\begin{cases}
\binom{a+2}{2}\binom{b+2}{2} - \binom{a+1}{2}\binom{b+1}{2}
&\mbox{if}\quad(a,b)\in \NN^2\\
\;0&\mbox{if either $a<0$ or $b<0$}
\end{cases}$
\item[(ii)] $h^1(\OF(a,b)) =0$ if either $a\ge 0$ and $b\ge 0$,
  or $a\le 0$ and $b\le 0$;
\item[(iii)] $h^2(\OF(a,b)) =0$ if either $a\ge 0$ and $b\ge -2$,
  or $a=-1$, or $a\le -2$ and $b<0$.
\end{enumerate}
\end{lemma}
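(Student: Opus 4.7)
My plan is to extract all three statements from the long exact sequence in cohomology associated with the short exact sequence
\[
0 \to \Oo_\CPCPv(a-1,b-1) \to \Oo_\CPCPv(a,b) \to \OF(a,b) \to 0
\]
(which is (\ref{eqp2})), combined with the vanishing results in Lemma~\ref{p1}. The observation that drives everything is Lemma~\ref{p1}(ii): the $H^1$ of \emph{every} line bundle on $\CPCPv$ vanishes, so the connecting and restriction maps in the long exact sequence are always very constrained.

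For (i), the leftmost portion of the long exact sequence reads
\[
0 \to H^0(\Oo_\CPCPv(a-1,b-1)) \xrightarrow{\cdot p\ell} H^0(\Oo_\CPCPv(a,b)) \to H^0(\OF(a,b)) \to H^1(\Oo_\CPCPv(a-1,b-1)) = 0,
\]
and the first map is just multiplication by the defining polynomial $p\ell\in\Pp_{1,1}$, which is injective because $\Pp$ is a domain. Hence $h^0(\OF(a,b)) = h^0(\Oo_\CPCPv(a,b)) - h^0(\Oo_\CPCPv(a-1,b-1))$, and the formula follows by substituting Lemma~\ref{p1}(i). When $a<0$ or $b<0$, the same two terms both vanish, giving $h^0(\OF(a,b))=0$ (alternatively, restrict $\OF(a,b)$ to a fiber of $\pi_1$ or $\pi_2$ to see there are no global sections).

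For (ii), the relevant portion is
\[
0 = H^1(\Oo_\CPCPv(a,b)) \to H^1(\OF(a,b)) \to H^2(\Oo_\CPCPv(a-1,b-1)),
\]
so $h^1(\OF(a,b))$ is bounded by $h^2(\Oo_\CPCPv(a-1,b-1))$; I then apply Lemma~\ref{p1}(iii) to the shifted indices $(a-1,b-1)$. If $a\ge0$ and $b\ge0$, then either $a=0$ so $a-1=-1$, or $a\ge1$ so $a-1\ge0$ and $b-1\ge-1\ge-2$; in both sub-cases Lemma~\ref{p1}(iii) gives the vanishing. If $a\le0$ and $b\le0$, then either $a=0$ (hence $a-1=-1$) or $a\le-1$ and $b\le0$ (hence $a-1\le-2$ and $b-1<0$); again Lemma~\ref{p1}(iii) applies.

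For (iii), the relevant portion is
\[
H^2(\Oo_\CPCPv(a,b)) \to H^2(\OF(a,b)) \to H^3(\Oo_\CPCPv(a-1,b-1)) = 0,
\]
using Lemma~\ref{p1}(iv) for the right-hand vanishing. So $h^2(\OF(a,b))$ is a quotient of $h^2(\Oo_\CPCPv(a,b))$, and the hypothesis of (iii) is exactly the hypothesis of Lemma~\ref{p1}(iii) \emph{applied to the unshifted pair} $(a,b)$. The only real task is verifying that the three sub-cases in the statement match the three alternatives in Lemma~\ref{p1}(iii) verbatim, which is immediate. The most error-prone step is the bookkeeping of which case analysis uses the shifted pair and which uses the unshifted pair — that is where I would double-check, especially at the boundaries $a,b\in\{-2,-1,0\}$.
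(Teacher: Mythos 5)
Your proof is correct and is exactly the argument the paper has in mind (the paper merely states that Lemma~\ref{p2} follows from the exact sequence \eqref{eqp2} and Lemma~\ref{p1}, without spelling out the details). Your bookkeeping of shifted versus unshifted indices in the long exact sequence, and the case analysis against Lemma~\ref{p1}(iii), is accurate throughout.
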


\noindent Since $\Ss_{a,b} = H^0(\OF(a,b))$, we have that $\Ss_{a,b}$ has
dimension $\binom{a+2}{2}\binom{b+2}{2}-\binom{a+1}{2}\binom{b+1}{2}$, for all
$(a,b)\in \NN^2$. In particular $\dim(\Ss_{1,1})=8$.\smallbreak

Basic results regarding the flag manifold from a related point of view
can also be found in \cite[\S1.1]{malaspina}.
  
\subsection{Automorphisms}\label{Aut}
This subsection briefly describes the projective and unitary automorphisms of
the flag manifold $\FF$. We shall only need the latter from Subsection \ref{U3}
onwards, so we start from the ambient bi-projective space.

The family of automorphisms of $\CPCPv$ is generated by pairs $(B_1,B_2)$ in
$\SL(3,\CC)\times\SL(3,\CC)$, acting via matrix multiplication as
\[ (B_1,B_2)\cdot(p,\ell)=(pB_1^\vee,\>B_2\ell),\]
together with the involution
\begin{equation}\label{kappa}
  \kappa\colon(p,\ell)\longmapsto(\ell^\vee,p^\vee),
\end{equation}
in accordance with \cite{tango}. To obtain the family of
automorphisms of $\FF$ it is sufficient to consider those
transformations that preserve the equation $p\ell=0$. Applying
$(B_1,B_2)$, one gets
\[ p\ell = (pB_1^\vee)(B_2\ell) = p(B_1^\vee B_2)\ell,\]
so $B_1=(B_2^{-1})^\vee$. We deduce that the automorphisms of $\FF$ are generated by
matrices $B\in\SL(3,\CC)$ acting as
\begin{equation}\label{PGL}
  B\cdot(p,\ell)=(pB^{-1},B\ell),
\end{equation}
together with $\kappa$. Of course, $e^{2\pi i/3}I$ acts trivially on
$\CC^3$, so the connected Lie group acting effectively is
$\PGL(3,\CC)=\SL(3,\CC)/\ZZ_3$.

The subgroup of projective unitary transformations is defined by imposing a
reduction from $SL(3,\CC)$ to the special unitary group $\SU(3)$. For the
purposes of calculation, it will be more convenient to allow $B\in U(3)$, given
that the centre of $U(3)$ will always act trivially. The realization of the
flag manifold as the twistor space $\FF$ of $\CP$ will require us to restrict
to this compact subgroup.

Having fixed an origin in $\FF$, one can further reduce $\SU(3)$ to
its standard maximal torus $\TT$ consisting of diagonal
matrices. Because the resulting isotropy representation of
$\FF=\SU(3)/\TT$ has three irreducible real components
\cite[\S12]{es}, there is (up to homothety) a 2-parameter family of
$\SU(3)$-invariant Riemannian metrics on $\FF$. This family includes
two Einstein metrics, a K\"ahler and a nearly-K\"ahler one. Both can
be constructed as submersions over $\CP$ endowed with the Fubini-Study
metric $\gFS$, see for example \cite{Musk}.

The choice of the maximal torus $\TT$ gives rise to the Weyl group
$W=N(\TT)/\TT\cong S_3$, where $N(\TT)$ is the normalizer of $\TT$ in $SU(3)$.
An element of $W$ can be represented by a matrix in $\SU(3)$ permuting the
coordinates. This projective action will be relevant in the classification of
canonical forms for surfaces, see Remark \ref{Xratio}. There is however a
different representation of $W$ that acts non-trivially on cohomology and is
especially relevant to the twistor geometry of Section \ref{Twy}, see Remark
\ref{JJ}.

\section{Some curves and surfaces in the flag manifold}\label{c&s}
          
We start with the concept of bidegree for a curve. An algebraic
submanifold is said to be \textit{integral} if it is reduced and
irreducible. The symbol $\simeq$ will be used throughout this paper to
indicate biholomorphism.

\begin{definition}
Let $C\subset \FF$ be an integral curve. We define its bidegree
$\bdeg(C)=(d_1,d_2)$ as follows: we say that $d_i=0$ if $\pi_i(C)$ is a point;
otherwise $d_i=a_ib_i$, where $a_i=\deg(\pi_i(C))$ and $b_i=\deg(\pi|_{C})$.
\end{definition}

\noindent If a curve $D$ has irreducible components $C_1,\ldots,C_s$
then the bidegree $\bdeg(D)$ is taken to be the sum of the bidegrees
$\bdeg(C_1),\ldots,\bdeg(C_s)$.\smallbreak

The fibers of $\pi_1$ and $\pi_2$ provide the most obvious examples. For any
$p\in\CP$ and $m\in\CPv$, we have
\[\begin{array}{l}
\bdeg(\pi_1^{-1}(p))=(0,1),\\
\bdeg(\pi_2^{-1}(m))=(1,0).
\end{array}\]
Moreover, we can identify
\begin{equation}\label{rk1001}
N_{\pi_1^{-1}(p),\FF}=\pi_1^{*}(N_{p,\CP})=\pi_1^{*}(\Oo_\CP\oplus\Oo_\CP) =
\OF\oplus\OF.
\end{equation}
as the normal bundle of a fiber of $\pi_1$ in $\FF$.

\begin{remark}\label{rk2001}
  Let $C\subset \FF$ be an integral projective curve with $\bdeg(C)=(d_1,d_2)$.
  Then, for $i\in\{1,2\}$, we have the following:
\begin{itemize}
\item
If $d_i=1$, then $C$ is rational. Indeed, since $C$ is integral, the
map $\pi_{i|C}\colon C\to\PP^1$ is birational. Let $u\colon C'\to C$
be the normalization map, so that $\pi_{i|C}\circ u\colon C'\to\PP^1$
is a degree-one morphism between smooth curves. Thus $\pi_{i|C}\circ u$
and $\pi_{i|C}$ are isomorphisms, and $\deg(\pi_i(C))=1$.
\item If $d_i=0$, then $d_{3-i}=1$. Indeed, if $\pi_i(C)=\{p\}$, then
  $C\subseteq \pi_i^{-1}(p)\simeq \PP^1$ and hence, since $C$ is
  integral, $C= \pi_i^{-1}(p)$. In particular, by \eqref{rk1001},
  $N_{C,\FF}=\Oo^2_\FF$.
\end{itemize}
This concludes an analysis of the easy cases.
\end{remark}

\subsection{Curves of bidegree (1,1)}\label{Lqm}
We next define the fundamental family of curves of bidegree $(1,1)$.
See~\cite[p. 438, Example 3]{ahs}, \cite[p. 147]{hitchin},
\cite[\S4.3]{gauduchon}  and~\cite[Section 1.1]{malaspina}.

\begin{definition}\label{def rette}
  Fix $(q,m)\in\CPCPv$ such that $(q,m)\in(\CPCPv)\sm\FF$, so $qm\ne0$.
The formula
\[ L_{q,m}:=\{(p,\ell)\in\FF\mid p\in m,\ \ell\ni q\} = 
\{(p,\ell)\in\CPCPv\mid  p\ell=0,\ q\ell=0,\ pm=0\}\]
defines a family $\Vv$ of curves in $\FF$.
\end{definition}

\begin{figure}
\vspace{-15pt}
\includegraphics[scale=0.7]{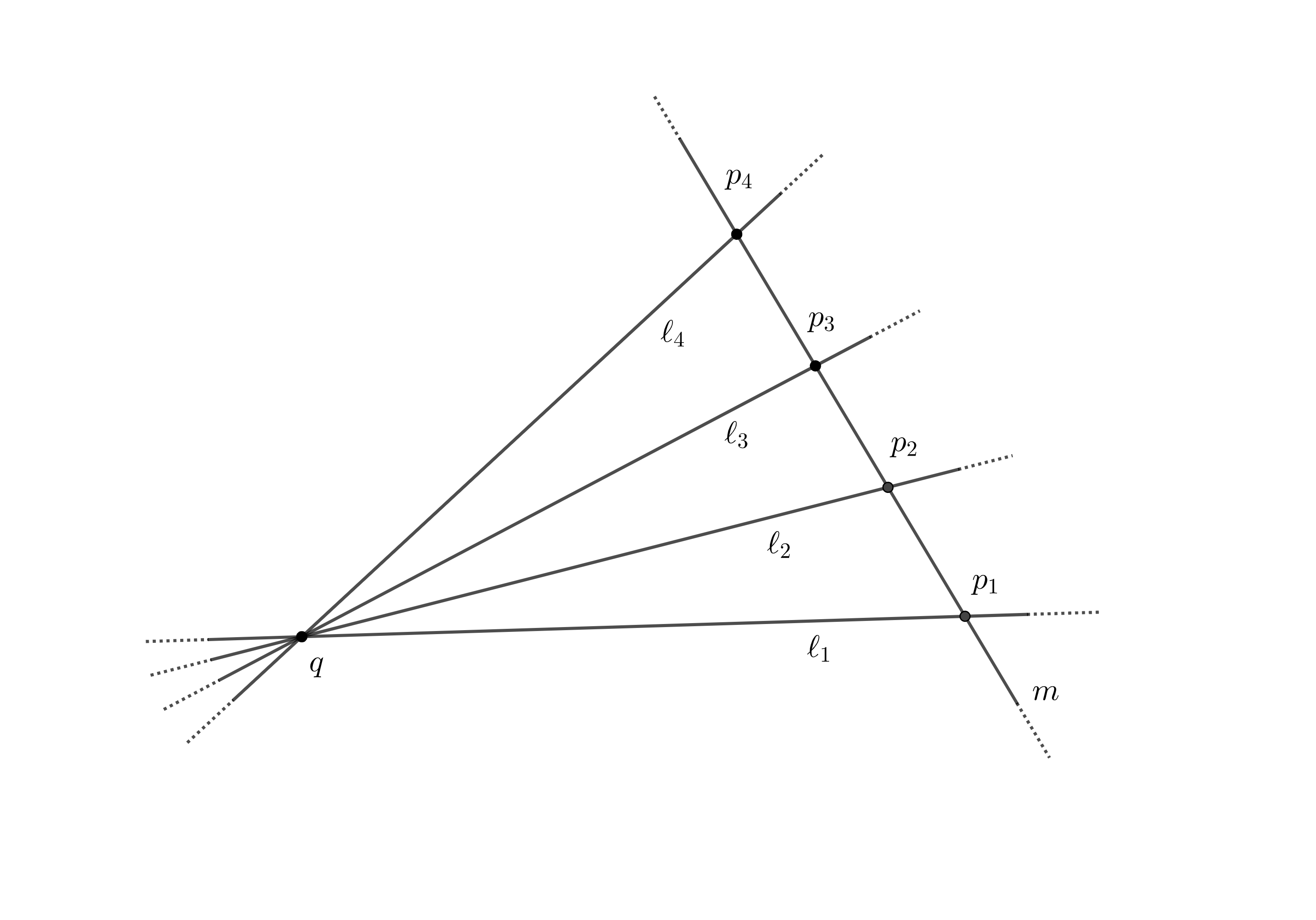}
\vspace{-25pt}
\caption{Suppose that $q\notin m$. Then $L_{q,m}$ consists of pairs $(p,\ell)$
  such that $q\in\ell$ and $p=\ell\cap m$. In the figure, $(p_i,\ell_i)$ are
four elements of $L_{q,m}$.}
\end{figure}

Given $p,q\in\CP$, the line passing through $p$ and $q$ is represented by the column
vector
\[ (p_1 q_2-p_2 q_1:p_2 q_0-p_0 q_2:p_0 q_1-p_1 q_0)^\vee, \]
which we shall denote by $p\times q$. The cross product is the natural
isomorphism
\begin{equation}\label{cross}
  \Lambda^2(\CC^3)\stackrel\cong\lra(\CC^3)^\vee,
  \end{equation}
induced by an $\SL(3,\CC)$ structure on $\CC^3$. In the same way, the
intersection $\ell\cap m$ of two lines is the point represented by the row
vector
\[ (\ell_1 m_2-\ell_2 m_1,\>\ell_2 m_0-\ell_0 m_2,\>\ell_0 m_1-\ell_1 m_0),\]
which we denote by $\ell\times m$. We further abuse notation by writing
$p\times q\in\CPv$ and $\ell\times m\in\CP$.

The cross product formulae are suggestive of computation, and we shall use them
below in preference to the equivalent set theoretic statements.

\begin{remark}\label{intersectwithfbire}
It is easy to see that $\bdeg(L_{q,m})=(1,1)$. Indeed, $L_{q,m}\cap
\pi_1^{-1}(q')\ne\es$ if and only if $q'm=0$, so $\pi_1(L_{q,m})=m$ and 
$$L_{q,m}\cap \pi_1^{-1}(q') = \{(q',q\times q')\}.$$
Note that $q\times q'$ is defined because $q$ does not lie on $m$. Similarly,
$L_{q,m}\cap \pi_2^{-1}(m')\ne\es$ if and only if $qm'=0$, in which case
$$L_{q,m}\cap \pi_2^{-1}(m')=\{(m\times m',m')\}.$$
By Remark~\ref{rk2001}, we have $L_{q,m}\simeq\PP^1$ for any $(q,m)\in
(\CPCPv)\sm\FF$.

The geometry of such curves is described further in Lemma~\ref{tutte11} below.
\end{remark}

Our next result describes the possible intersections of two elements in $\Vv$.

\begin{lemma}\label{lemmaintersection11}
  Let $(q,m), (q',m')$ be distinct points of $(\CPCPv)\sm\FF$.
\begin{enumerate}
\item[(i)] If $q\ne q'$ and $m\ne m'$, then $L_{q,m}\cap L_{q',m'}\ne \es$ if
  and only if the point $m\cap m'$ lies on the line $qq'$, in which case
  $L_{q,m}\cap L_{q',m'} = \{(m\times m',\>q\times q')\}$.
\item[(ii)] If $q= q'$ and $m\ne m'$, then $L_{q,m}\cap L_{q',m'} =
  \{(m\times m',q\times(m\times m'))\}$.
\item[(iii)] If $q\ne q'$ and $m= m'$, then $L_{q,m}\cap
  L_{q',m'} = \{((q\times q')\times m,\>q\times q')\}$.
\end{enumerate}
\end{lemma}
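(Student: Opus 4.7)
The approach is direct and algebraic. A point $(p,\ell)\in \FF$ belongs to $L_{q,m}\cap L_{q',m'}$ if and only if the five equations
\[p\ell=0,\quad q\ell=0,\quad q'\ell=0,\quad pm=0,\quad pm'=0\]
all hold; so $\ell$ must pass through both $q$ and $q'$, and $p$ must lie on both $m$ and $m'$. I will handle each case by determining $\ell$ and $p$ from these constraints using the cross-product formalism set up just before Remark \ref{intersectwithfbire}, and then imposing the flag condition $p\ell=0$ as the final check.

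In case (i), since $q\ne q'$ and $m\ne m'$, the conditions on $\ell$ and on $p$ each force a unique value: $\ell=q\times q'$ and $p=m\times m'$. The remaining condition $p\ell=0$ reads $(m\times m')(q\times q')=0$, which is exactly the statement that the point $m\cap m'$ lies on the line $qq'$. When this incidence holds, the intersection consists of the single point $(m\times m',\,q\times q')$.

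In case (ii), with $q=q'$ and $m\ne m'$, the point $p=m\times m'$ is still forced, while $\ell$ must pass through $q$ and $p$. For this line to be well-defined we need $p\ne q$; if instead $p=q$ then $q\in m$, giving $qm=0$ and contradicting $(q,m)\notin\FF$. Hence $\ell=q\times(m\times m')$ and the unique intersection point is as stated. Case (iii) is entirely symmetric to (ii), either by a parallel argument (swap the roles of points and lines, using $(q',m)\notin\FF$ to guarantee $q\times q'\ne m$) or by applying the involution $\kappa$ of \eqref{kappa}.

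The only point requiring vigilance is that every cross product we form is well-defined, i.e.\ that the pairs of vectors being combined are projectively distinct; in each case this is immediate from the hypothesis that neither $(q,m)$ nor $(q',m')$ lies on $\FF$. No serious obstacle arises.
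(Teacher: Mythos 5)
Your proof is correct and follows essentially the same route as the paper's: set up the five incidence equations, solve for $p$ and $\ell$ via cross products, and impose the remaining flag condition $p\ell=0$. The only cosmetic difference is that you spell out the well-definedness of the cross products slightly more explicitly (noting $p\ne q$ in case (ii) and $q\times q'\ne m$ in case (iii), each by appeal to the hypothesis that the parametrizing pairs avoid $\FF$), whereas the paper records the same fact in passing; your remark that $\kappa$ interchanges (ii) and (iii) is a nice alternative shortcut but is not needed.
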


\begin{proof}
The results rely on an analysis of the following system, representing a point
$(p,\ell)$ of intersection of the $(1,1)$-curves $L_{q,m}$ and $L_{q',m'}$:
\begin{equation}\label{systemLqm}
\begin{cases}
\ p_0\ell_0+p_1\ell_1+p_2\ell_2=0\\[2pt]
\ q_0\ell_0+q_1\ell_1+q_2\ell_2=0\\
\ p_0m_0+p_1m_1+p_2m_2=0\\[2pt]
\ q_0'\ell_0+q_1'\ell_1+q_2'\ell_2=0\\
\ p_0m_0'+p_1m_1'+p_2m_2'=0
\end{cases}
\end{equation}
Assume first that $q\ne q'$ and $m\ne m'$. By considering the third and fifth
equations, we get that the first component $p$ of the intersection must lie on
$m$ and $m'$, i.e.\ $p=m\times m'$. Similarly, by from the second and fourth
equation, the second component $\ell$ equals $q\times q'$. Given the first
equation (characterizing $\FF$), this solution is admissible if and only
$(m\times m')(q\times q')=0$.

Assume next that $q=q'$ and $m\ne m'$. As before, we get that $p=m\times
m'$. The latter is distinct from $q$ (for otherwise $q$ would lie in $m\cap
m'$), hence we see that $\ell$ must be the line $pq$. If follows that
$\ell=q\times(m\times m')$, and (ii) is established.

Case (iii) is completely analogous.
\end{proof}

The following is an immediate consequence of the previous lemma:

\begin{corollary}\label{condition-intersection}
Let $(q,m),(q',m')\in(\CPCPv)\sm\FF$. Then $L_{q,m}$ intersects $L_{q',m'}$ if
and only if $(m\times m')(q\times q')=0$.
\end{corollary}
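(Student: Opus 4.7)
The plan is to derive the corollary as a direct case-by-case consequence of Lemma \ref{lemmaintersection11}, using the fact that the expression $(m\times m')(q\times q')$ degenerates automatically in the two ``diagonal'' cases.

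First I would separate the hypothesis into the three mutually exclusive possibilities $\{q\ne q',\ m\ne m'\}$, $\{q=q',\ m\ne m'\}$, $\{q\ne q',\ m=m'\}$, which are exhaustive because $(q,m)\ne(q',m')$. In the first case, I would invoke part (i) of Lemma \ref{lemmaintersection11}: the intersection is nonempty precisely when the point $m\cap m'$ lies on the line $qq'$, and translating this incidence into the algebraic language of the preliminaries, the point $m\cap m'$ is represented by the row vector $m\times m'$ and the line $qq'$ by the column vector $q\times q'$, so the incidence $p\in\ell$ becomes exactly the pairing $(m\times m')(q\times q')=0$.

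In the second case, parts (ii) of the lemma guarantees $L_{q,m}\cap L_{q',m'}\ne\es$ unconditionally, and simultaneously $q\times q'=q\times q=0$ in $(\CC^3)^\vee$, so the pairing $(m\times m')(q\times q')$ vanishes trivially; thus both sides of the claimed equivalence hold. The third case is symmetric: by part (iii) the intersection is nonempty, and $m\times m'=0$ forces the pairing to vanish. Assembling the three cases proves both implications of the ``if and only if''.

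There is no serious obstacle: the only small care needed is to verify that the geometric statement ``$m\cap m'$ lies on $qq'$'' coincides with the algebraic statement $(m\times m')(q\times q')=0$, which is just an instance of the identification \eqref{cross} together with the convention $p\in\ell\iff p\ell=0$ adopted at the start of Section \ref{prelim}. Nothing else is required beyond restating the outcomes of Lemma \ref{lemmaintersection11} in a uniform notation.
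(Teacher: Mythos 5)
Your argument is correct and is precisely the route the paper intends: Corollary \ref{condition-intersection} is presented as an immediate consequence of Lemma \ref{lemmaintersection11}, and your case split on $q=q'$, $m=m'$, together with the observation that the cross-product pairing vanishes automatically in the two degenerate cases (since $q\times q'=0$ or $m\times m'=0$ there), is exactly the derivation being invoked. The translation of ``$m\cap m'$ lies on $qq'$'' into $(m\times m')(q\times q')=0$ is likewise the identification already used inside the proof of Lemma \ref{lemmaintersection11}.
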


\begin{example}
Take $L_{q,m},\,L_{q',m'}\in\Vv$ such that
\[ q=[i:0:0],\quad m=[1:1:0],\qbox{and} q'=[1:1:0],\ m'=[i:0:0].\]
Then $L_{q,m}\cap L_{q',m'}=\es$. For $q\times q'=[0:0:i]^\vee$ and $m\cap
m'=[0:0:i]$.
\end{example}

We have seen that $(\CP\times\CPv)\sm \FF$ parametrizes a complex
$4$-dimensional family $\Vv$ of rational curves in $\FF$. If we extend the
definition of $L_{q,m}$ to the case $q\in m$, we find that
  \begin{equation}\label{Lsing}
    L_{q,m} = \PUqm
    \end{equation}
is the union of the respective fibers meeting in
$(q,m)\in\FF$. Referring to Definition \ref{def rette}, these two
fibers correspond to the respective possibilities that $p=q$ (which
forces $q\ell=0$) or $\ell=m$ (which forces $pm=0$). The algebraic
closure $\bar\Vv$ is therefore $\CPCPv$, formed by adjoining these
reducible curves. In other words, $\bar\Vv$ is the Hilbert scheme that
parametrizes all closed subschemes of $\FF$ having Hilbert polynomial
$2t+1$ with respect to the Segre embedding. See also~\cite[Lemma
  1.5]{malaspina}).

\subsection{Surfaces of bidegree (1,0) and (0,1)}
As for the case of curves, but perhaps more naturally, we can consider a notion
of bidegree. Subsequently, we shall focus mainly on the cases of `low'
bidegree.

Let $d_1,d_2\in\NN$ be any pair of natural numbers. We set
$\OF(d_1,d_2)=\pi_1^*\Oo_\CP(d_1)\otimes \pi_2^*\Oo_{\CPv}(d_2)$, and use
$|\OF(d_1,d_2)|$ to denote the projective space $\PP(H^0(\OF(d_1,d_2)))$. From
now on we denote $\OF$ by $\Of$ if no confusion will arise.

\begin{definition}
  Let $S\subset \FF$ be an algebraic surface in $|\Of(d_1,d_2)|$.  Then we
  say that $S$ has bidegree $(d_1,d_2)$, and we write $\bdeg(S)=(d_1,d_2)$.
\end{definition}

Fix $m\in\CPv$ and $q\in\CP$. The former represents a line in $\CP$, which
set-theoretically equals $\pi_1(\pi_2^{-1}(m))$. The latter defines a line in
$\CPv$ (corresponding to a pencil of lines in $\CP$), which is of course
$\pi_2(\pi_1^{-1}(q))$. These objects pull back to surfaces in $\FF$:

\begin{definition}\label{qHm}
Given $m\in\CPv$ and $q\in\CP$, set
\begin{align*}
  \Hm&:=\pi_1^{-1}(\pi_1(\pi_2^{-1}(m)))=\{(p,\ell)\in \FF\mid p\in m\}\\
  \qH&:=\pi_2^{-1}(\pi_2(\pi_1^{-1}(q)))=\{(p,\ell)\in \FF\mid \ell\ni q\}.
\end{align*}
\end{definition}

In words, $\Hm$ is the set of pairs $(p,\ell)\in \FF$ such that $p$ moves on
$m$, while $\qH$ is the set of pairs $(p,\ell)\in \FF$ such that $\ell$
contains $q$. Using the cross product, we also have
\begin{align}\label{Hcross}
  \Hm&:=\{(p,\ell)\mid p\ell=0,\ pm=0\}
  = \{(\ell\times m,\ell)\mid \ell\in\CPv\}\\
  \qH&:=\{(p,\ell)\mid p\ell=0,\ q\ell=0\}
  = \{(p,p\times q)\mid p\in\CP\}.
\end{align}

If $p,q$ are distinct points of $\CP$, then
\[ \pi_1^{-1}(p)\cap\qH = \{(p,p\times q)\}\]
consists of a single point. Similarly, if $l\ne m$ then
\[ \pi_2^{-1}(\ell)\cap\Hm = \{(\ell\times m,\ell)\}.\]
It follows that $\bdeg(\qH)=(0,1)$ and $\bdeg(\Hm)=(1,0)$. We can also deduce
this from \eqref{Pp}, since $\qH$ is defined by the equation $q\ell=0$ that is
linear (of degree one) in the $\ell_j$ and $\Hm$ is defined by one linear in
the $p_i$.

On the other hand, it is easy to see that any surface $S\in|\Of(0,1)|$ has
the form $\qH$ for some $q\in\CP$, and any surface $S\in|\Of(1,0)|$ has
the form $\Hm$ for some $m\in\CPv$. Hence the family of surfaces
$S\in|\Of(0,1)|$ is parametrized by $\CP$ and has complex dimension
$2$. Any two elements of the family are projectively equivalent because
$\SL(3,\CC)$ acts transitively on $\CP$. The same considerations hold for
elements in $|\Of(1,0)|$.

Since $\pi_{1}^{-1}(q)\subset \qH$ and $\pi_{2}^{-1}(m)\subset\Hm$, we obtain the
following result.

\begin{corollary}\label{fritz}
Both $\Hm$ and $\qH$ are Hirzebruch surfaces of type $1$. Indeed,
$\pi_1$ bestows upon $\qH$ the structure of a blow-up of $\CP$ at $q$,
and $\pi_2$ realizes $\Hm$ as a blow-up of $\CPv$ at $m$.
\end{corollary}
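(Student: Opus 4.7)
The plan is to analyze the two natural projections $\pi_1,\pi_2$ restricted to $\qH$ (the case of $\Hm$ being symmetric), using the explicit cross-product descriptions given in \eqref{Hcross}.

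First, I would examine $\pi_2|_{\qH}\colon\qH\to\CPv$. Its image is $\pi_2(\pi_1^{-1}(q))$, which is the pencil of lines through $q$, a projective line $\PP^1\subset\CPv$. The fiber over any $\ell$ in this pencil is $\{(p,\ell)\in\FF\mid p\in\ell\}=\pi_2^{-1}(\ell)\simeq\PP^1$. Thus $\pi_2|_{\qH}$ realizes $\qH$ as a $\PP^1$-bundle over $\PP^1$, so $\qH$ is a Hirzebruch surface $\Sigma_n$ for some $n\ge0$.

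To pin down $n$, I would analyze $\pi_1|_{\qH}\colon\qH\to\CP$. For $p\ne q$, there is a unique line $\ell=p\times q$ through $p$ and $q$, and the description $\qH=\{(p,p\times q)\mid p\in\CP\}$ from \eqref{Hcross} shows $\pi_1^{-1}(p)\cap\qH$ is the single point $(p,p\times q)$. On the other hand, over $p=q$ the constraint $q\ell=0$ is automatic, so $\pi_1^{-1}(q)\cap\qH=\pi_1^{-1}(q)\simeq\PP^1$. Hence $\pi_1|_{\qH}$ is a birational morphism, an isomorphism on $\CP\setminus\{q\}$, that contracts the smooth rational curve $E:=\pi_1^{-1}(q)$ to $q$. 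Since $\qH$ is smooth (it is the complete intersection in $\CPCPv$ cut out by the two linear equations $p\ell=0$ and $q\ell=0$, so smoothness follows from the Jacobian criterion applied at any point $(p,\ell)\in\qH$), and $\CP$ is smooth, the universal property of blow-ups applied to the single reduced contracted curve $E$ identifies $\pi_1|_{\qH}$ with the blow-up $\mathrm{Bl}_q(\CP)\to\CP$. The blow-up of $\CP$ at a point is precisely $\Sigma_1$, the Hirzebruch surface of type $1$, so $\qH\simeq\Sigma_1$.

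The argument for $\Hm$ is completely parallel: $\pi_1|_{\Hm}\colon\Hm\to m\simeq\PP^1$ is a $\PP^1$-bundle (fibers are the curves $\pi_1^{-1}(p)$ for $p\in m$), while $\pi_2|_{\Hm}\colon\Hm\to\CPv$ is an isomorphism over $\CPv\setminus\{m\}$ (the fiber over $\ell\ne m$ being the single point $(\ell\times m,\ell)$ by \eqref{Hcross}) and contracts $\pi_2^{-1}(m)$ to $m$, exhibiting $\Hm$ as the blow-up of $\CPv$ at $m$.

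The only point that requires any care is the smoothness of $\qH$ and $\Hm$, needed to invoke the universal property of blow-ups; this is immediate because each is a codimension-two linear section of the smooth Segre variety along a transverse pair of linear forms, or equivalently a smooth divisor in the smooth $\PP^1$-bundle $\FF\to\CP$ (respectively $\FF\to\CPv$). I do not expect any real obstacle here; the proof is essentially an unwinding of the cross-product parametrizations already recorded in \eqref{Hcross}.
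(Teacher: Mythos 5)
Your argument is correct and follows the same route the paper takes implicitly: Corollary \ref{fritz} is stated as an immediate consequence of the observations just before it, namely that $\pi_1^{-1}(p)\cap\qH$ is a single point for $p\ne q$ while $\pi_1^{-1}(q)\subset\qH$, so $\pi_1|_{\qH}$ contracts exactly one rational curve and is bijective elsewhere. You supply details the paper leaves implicit (smoothness of $\qH$ as a linear section, the $\PP^1$-bundle structure under $\pi_2$); the only slight imprecision is that identifying a birational morphism of smooth projective surfaces contracting a single irreducible curve with a blow-up rests on the factorization theorem for such morphisms (every such morphism is a composition of contractions of $(-1)$-curves) rather than on the universal property of blow-ups per se, though the conclusion is the same.
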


Any two Hirzebruch surfaces of `like' type intersect in a fiber of $\pi_1$ or
$\pi_2$. Indeed, for $m\ne m'$ and $q\ne q'$, we have
\[  \Hm\cap H_{m'}=\pi_1^{-1}(m\times m'),\qquad
\qH\cap {}_{q'}H=\pi_2^{-1}(q\times q').\] It follows that any two bidegree
$(1,0)$ or $(0,1)$ surfaces always meet, while the intersection three generic
$(1,0)$ (respectively $(0,1)$) surfaces is empty. Moreover, for any $q\in\CP$
and $m\in\CPv$ such that $q\not\in m$, we have
$$
\qH\cap \Hm=L_{q,m}
$$
while, if $q\in m$, we get
$$\qH\cap \Hm=\PIqm.$$
It follows that the triple intersection
$$\Hm\cap H_{m'}\cap \qH = \pi_1^{-1}(m\times m')\cap\qH =
\{(m\times m',q\times(m\times m'))\}$$
is a single point. In conclusion,

\begin{proposition}\label{intersezione}
The intersection of Hirzebruch surfaces can be summarized by the products
\begin{align*}
\Of(1,0)\cd \Of(1,0)\cd\Of(1,0)=0,\\
\Of(1,0)\cd \Of(0,1)\cd\Of(1,0)=1,\\
\Of(0,1)\cd \Of(1,0)\cd\Of(0,1)=1,\\
\Of(0,1)\cd \Of(0,1)\cd\Of(0,1)=0.
\end{align*}
\end{proposition}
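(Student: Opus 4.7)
\medskip

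\noindent\textbf{Proof plan.} My strategy is to bypass case-by-case transversality checks by pulling the computation up from $\FF$ to the ambient bi-projective space $\CPCPv$. Since $\FF\in|\Oo_\CPCPv(1,1)|$ and $\Of(a,b)$ is the restriction of $\Oo_\CPCPv(a,b)$ to $\FF$, the projection formula gives
\[\Of(a_1,b_1)\cd\Of(a_2,b_2)\cd\Of(a_3,b_3)\;=\;H_1^{a_1+a_2+a_3}H_2^{b_1+b_2+b_3}(H_1+H_2)\]
computed in the Chow ring of $\CPCPv$, where $H_1,H_2$ denote the two hyperplane pullbacks. Because this ring is $\ZZ[H_1,H_2]/(H_1^3,H_2^3)$ with $H_1^2H_2^2$ the class of a point, the four triples in the statement evaluate immediately: $H_1^3(H_1+H_2)=0$, $H_1^2H_2(H_1+H_2)=H_1^2H_2^2=1$, $H_1H_2^2(H_1+H_2)=H_1^2H_2^2=1$, and $H_2^3(H_1+H_2)=0$.

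For a more geometric flavour, consistent with the preceding discussion, the same numbers can be extracted from the set-theoretic formulas derived just above the statement. For three generic lines $m,m',m''\in\CPv$, the paragraph above gives $\Hm\cap H_{m'}=\pi_1^{-1}(m\times m')$, and this fiber intersects $H_{m''}$ only when $(m\times m')m''=0$, which fails for generic $m''$; hence $\Of(1,0)^3=0$. For the mixed product, the text already exhibits $\Hm\cap H_{m'}\cap\qH$ as the single reduced point $(m\times m',\,q\times(m\times m'))$, so $\Of(1,0)\cd\Of(0,1)\cd\Of(1,0)=1$. The remaining two cases follow by the involution $\kappa$ of~\eqref{kappa}, which preserves $\FF$ and exchanges $\Of(1,0)\leftrightarrow\Of(0,1)$, or equally by a symmetric direct computation swapping the roles of $\pi_1$ and $\pi_2$.

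The only technical obstacle in the geometric approach is verifying that the unique set-theoretic intersection point in the mixed case is actually transverse, so the scheme-theoretic multiplicity really is $1$. This would be checked by observing that the three linear forms $pm$, $pm'$, $q\ell$ restrict to three independent covectors on $T_{(m\times m',\,q\times(m\times m'))}\FF$ for generic $m,m',q$. The Chow-ring calculation in the first paragraph sidesteps this bookkeeping entirely, which is why I would take it as the main proof and relegate the geometric version to a parenthetical remark.
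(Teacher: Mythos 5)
Your Chow-ring argument is correct and gives a genuinely different route from the paper's. The paper derives the proposition as a summary of the set-theoretic intersections computed in the preceding paragraphs: $\Hm\cap H_{m'}=\pi_1^{-1}(m\times m')$, the observation that this fiber meets $H_{m''}$ only when $(m\times m')m''=0$ (vacuous for generic $m''$), and the explicit identification of $\Hm\cap H_{m'}\cap\qH$ as a single point. That is exactly your ``geometric flavour'' second paragraph, so your secondary argument coincides with the paper's. Your primary argument --- using $\FF\in|\Oo_\CPCPv(1,1)|$, the projection formula, and the ring $\ZZ[H_1,H_2]/(H_1^3,H_2^3)$ --- does not appear in the paper, and it is arguably tighter: it delivers the intersection numbers as scheme-theoretic degrees without any appeal to genericity or to the transversality of the single intersection point, a point the paper leaves implicit. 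The trade-off is that your algebraic version sits less naturally next to the explicit cross-product formulas the authors have just developed and intend to reuse (e.g., in the proof of Lemma~\ref{bideg0d}), whereas their set-theoretic description makes those later applications immediate. Your concluding remark correctly identifies the only gap in the geometric version --- checking that the mixed triple intersection is reduced --- and correctly notes that the Chow-ring computation absorbs it.
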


\begin{remark}\label{formulac2}
Thanks to the previous proposition, we are able to compute $c_{1}^{2}$ for a
generic surface $S$ of bidegree $(a,b)$. Indeed, given that
$\omega_{\FF}=\Oo(-2,-2)$, we also have that $\omega_{S}=\Oo(a-2,b-2)$, and
hence $c_{1}^{2}=\Oo(a-2,b-2)\cdot \Oo(a-2,b-2)\cdot \Oo(a,b)$. Therefore
\begin{align*}
c_{1}^{2}&=[(a-2)\Oo(1,0)+(b-2)\Oo(0,1)]\cdot[(a-2)\Oo(1,0)+(b-2)\Oo(0,1)]\cdot[a\Oo(1,0)+b\Oo(0,1)]\\
&=3a^{2}b+3ab^{2}-4a^{2}-4b^{2}-16ab+12a+12b.
\end{align*}
\end{remark}

We now show an important interplay between curves of bidegree $(1,1)$
and surfaces of bidegree $(0,1)$ and $(1,0)$.

\begin{lemma}\label{tutte11} 
  Let $C\subset \FF$ be a connected curve of bidegree $(1,1)$.
Then $C=\qH\cap \Hm$ for some $(q,m)\in\CPCPv$. In particular, if
  $C$ is smooth then $qm\ne0$ and $C=L_{q,m}$.
\end{lemma}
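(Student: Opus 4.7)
The plan is to extract a candidate pair $(q,m)\in\CPCPv$ from the projections of $C$, establish the inclusion $C\subseteq\qH\cap\Hm$, and then conclude equality by matching bidegrees componentwise.

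I would start by decomposing $C=C_1\cup\cdots\cup C_s$ into its integral components. Bidegrees of integral curves lie in $\NN^2$, are additive over irreducible components, and by Remark \ref{rk2001} no integral curve has bidegree $(0,0)$ (since $d_i=0$ forces $d_{3-i}=1$). Because the total bidegree is $(1,1)$, only two decompositions are arithmetically possible: either $s=1$ and $C$ is itself an integral curve of bidegree $(1,1)$, or $s=2$ with the two components of respective bidegrees $(1,0)$ and $(0,1)$.

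In the integral case, $d_1=d_2=1$ forces $\pi_1|_C$ and $\pi_2|_C$ to be birational with images of degree one. Writing $\pi_1(C)=\{p\in\CP:pm=0\}$ for a unique $m\in\CPv$ and $\pi_2(C)=\{\ell\in\CPv:q\ell=0\}$ for a unique $q\in\CP$, I get $C\subseteq\Hm$ and $C\subseteq\qH$, hence $C\subseteq\qH\cap\Hm$. From the intersection analysis of Hirzebruch surfaces preceding Proposition \ref{intersezione}, $\qH\cap\Hm$ equals $L_{q,m}$ if $qm\ne0$ and equals $\pi_1^{-1}(q)\cup\pi_2^{-1}(m)$ if $qm=0$. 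An integral curve of bidegree $(1,1)$ cannot sit inside a single fiber (whose bidegree is $(0,1)$ or $(1,0)$), so the case $qm=0$ is excluded; then $C$ is an integral subcurve of the irreducible curve $L_{q,m}$, forcing $C=L_{q,m}=\qH\cap\Hm$.

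In the reducible case, Remark \ref{rk2001} identifies the two components as $C_1=\pi_2^{-1}(m)$ and $C_2=\pi_1^{-1}(q)$ for unique $q,m$. Connectedness of $C$ demands $C_1\cap C_2\ne\es$, which is equivalent to $qm=0$. Hence $C=\pi_1^{-1}(q)\cup\pi_2^{-1}(m)=\qH\cap\Hm$, matching the extended $L_{q,m}$ defined in \eqref{Lsing}. For the smoothness clause, a connected smooth curve is automatically irreducible, placing us back in the integral case where $qm\ne0$ has already been established and $C=L_{q,m}$. The main obstacle is the case analysis for reducible $C$: one must combine additivity of the bidegree with the rigid classification of low-bidegree integral curves in Remark \ref{rk2001} to rule out spurious decompositions and pin down a unique $(q,m)$.
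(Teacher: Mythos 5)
Your proof is correct and follows essentially the same strategy as the paper's: establish $C\subseteq\qH\cap\Hm$ and then conclude equality by matching bidegrees. The paper's own argument is terser, asserting directly that $\pi_1(C)$ and $\pi_2(C)$ are lines and leaving the reducible case implicit, whereas your explicit decomposition into integral components and the $(1,0)+(0,1)$ versus irreducible $(1,1)$ dichotomy (justified via Remark \ref{rk2001} and connectedness) makes that step fully transparent; this is a welcome sharpening rather than a different route.
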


\begin{proof}
Since $\pi_1(C)$ is a line, it equals $m$ for some $m\in\CPv$, and
$C\subset\Hm$. Similarly, $C\subset\qH$ for some point
$q\in\CP$. Therefore
$$C\subseteq \qH\cap \Hm$$ and, since $\bdeg(C)=(1,1)$, we conclude
that $C\simeq \qH\cap \Hm$.
\end{proof}

\noindent Note that the union $\PUqm$ with $q\not\in m$ consists of two skew
lines, in contrast to \eqref{Lsing}. It is therefore a reducible $(1,1)$-curve,
given by two disjoint components of bidegree $(1,0)$ and $(0,1)$.\smallbreak

We now pass to work on the normal bundle. Recall \cite{OSS} that any
rank two vector bundle on $\PP^1$ is a direct sum of line bundles
$\Oo_{\PP^1}(a_1)\oplus\Oo_{\PP^1}(a_2)$.

\begin{lemma}\label{f1}
Let $C\subset \FF$ be any smooth rational curve. Then $C$ has bidegree $(1,1)$
if and only if its normal bundle $N_C$ is isomorphic to the direct sum of two
line bundles of degree $1$.
\end{lemma}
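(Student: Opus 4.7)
The plan is to prove both implications, relying on the complete-intersection description of smooth $(1,1)$-curves already established in Lemma \ref{tutte11} together with standard adjunction.

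For the forward direction I would assume $\bdeg(C)=(1,1)$. By Lemma \ref{tutte11} one may write $C=\qH\cap \Hm$ with $(q,m)\in(\CPCPv)\sm\FF$. Since $C$ is smooth and is the scheme-theoretic intersection of two smooth divisors of the threefold $\FF$, the two surfaces meet transversally along $C$, so the conormal exact sequences split off and the standard complete-intersection computation yields
\[
N_{C,\FF}\;\cong\;N_{\qH,\FF}|_C\oplus N_{\Hm,\FF}|_C\;\cong\;\Of(0,1)|_C\oplus \Of(1,0)|_C.
\]
To identify the two summands, note that the numerical class of $C=\qH\cap\Hm$ in the Chow ring of $\FF$ is $c_1(\Of(0,1))\cdot c_1(\Of(1,0))$, hence by Proposition \ref{intersezione}
\[
\deg\Of(1,0)|_C=\Of(1,0)\cdot\Of(0,1)\cdot\Of(1,0)=1,\qquad \deg\Of(0,1)|_C=1,
\]
and since $C\simeq\PP^1$ both summands must equal $\Oo_{\PP^1}(1)$.

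For the converse, I would set $(d_1,d_2):=\bdeg(C)$ and first record the general formula
\[
\deg\Of(a,b)|_C=ad_1+bd_2,
\]
obtained by factoring $\pi_i|_C$ through its image and multiplying $a_i$ by $b_i$. Combining this with \eqref{can} and adjunction gives
\[
-2\;=\;\deg K_C\;=\;\deg K_\FF|_C+\deg\det N_C\;=\;-2(d_1+d_2)+\deg N_C.
\]
Under the hypothesis $N_C\cong\Oo_{\PP^1}(1)\oplus\Oo_{\PP^1}(1)$ one has $\deg N_C=2$, forcing $d_1+d_2=2$. The possibilities $(0,2)$ and $(2,0)$ are ruled out by Remark \ref{rk2001}, which shows that any integral curve with some $d_i=0$ must be a fiber of $\pi_i$ of bidegree $(0,1)$ or $(1,0)$. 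Hence $(d_1,d_2)=(1,1)$.

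The main obstacle will be cleanly justifying the direct-sum decomposition of $N_{C,\FF}$ in the forward direction: one has to argue that smoothness of $C$ at each of its points implies that $\qH$ and $\Hm$ meet transversally there, so that the two conormal line bundles inject as complementary summands of $N^\vee_{C,\FF}$. A minor secondary point is that the formula $\deg\Of(a,b)|_C=ad_1+bd_2$ has to be verified in the degenerate cases $d_i=0$, but this reduces immediately to the explicit description of fibers of $\pi_i$.
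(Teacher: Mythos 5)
Your proof is correct, and your forward direction takes a genuinely different route from the paper's. The paper proves both implications by a single degree count: from the sequence $0\to T_C\to T_{\FF}|_C\to N_C\to 0$, using $\det T_\FF\cong\Of(2,2)$ and $T_C\cong\Oo_{\PP^1}(2)$, it derives $a_1+a_2=2d_1+2d_2-2$ (where $N_C\cong\Oo(a_1)\oplus\Oo(a_2)$), with global generation of $T_\FF$ giving $a_i\ge0$. For the backward implication this forces $d_1+d_2=2$ and Remark~\ref{rk2001} eliminates $(2,0)$ and $(0,2)$, exactly as you do via adjunction. For the forward implication the paper needs $a_1\ne0\ne a_2$ and disposes of this with a rather terse appeal to Remark~\ref{rk2001}, whereas you invoke Lemma~\ref{tutte11} to realize $C$ as the complete intersection $\qH\cap\Hm$ and read off $N_C\cong\Of(0,1)|_C\oplus\Of(1,0)|_C$ directly, computing the two degrees by Proposition~\ref{intersezione}. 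Your version makes the forward direction structurally transparent and bypasses the $a_i\ne0$ step; the paper's version has the virtue of deriving both implications from the one numerical identity and the exact sequence, without needing the complete-intersection description. As for the transversality point you flag: it is handled by noting that the cycle $[\qH]\cdot[\Hm]$ has bidegree $(1,1)$, and since the irreducible curve $C$ of bidegree $(1,1)$ is contained in the scheme-theoretic intersection, the latter must equal $C$ and hence be reduced, which is exactly transversality of the two divisors along $C$.
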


\begin{proof}
 Let $(d_1,d_2)$ be the bidegree of $C$ and let $N_{C}\simeq
 \Oo_{\PP^1}(a_1)\oplus\Oo_{\PP^1}(a_2)$.
  
Since $C$ is smooth, we have the exact sequence
\begin{equation}\label{eqf1}
0 \to T_C \to T_{\FF|C} \to N_C \to 0,
\end{equation}
where the first non-trivial map is the inclusion and the second is the
projection on the quotient.  Notice that since ${\FF}$ is homogeneous, then
$T_\FF$ is globally generated, and the same holds for $N_C$. Hence
$a_1,a_2\ge0$.

Since $\omega_\FF\cong \Of(-2,-2)$, we have $\det (T_\FF)\cong \Oo
_\FF(2,2)$. Thus $\deg (T_{\FF|C}) =2d_1+2d_2$. Moreover, since $C$ is
rational, then $T_C\simeq \Oo_{\PP^1}(2)$. Hence $\deg (T_C)=2$ and
$\deg(N_{C})=a_1+a_2$.  Therefore, from the exact sequence in \eqref{eqf1}, we
have
$$
a_1+a_2=2d_1+2d_2-2.
$$

Now if $a_1=a_2=1$ we have $d_1+d_2=2$ and we conclude by Remark \ref{rk2001}.

On the other hand, if $d_1=d_2=1$ we have $a_1+a_2=2$, with
$a_1,a_2\ge0$. Moreover $a_1\ne0\ne a_2$, since integral curves of
bidegree $(0,d)$ or $(d,0)$ have $d=1$, again by Remark \ref{rk2001}.
\end{proof}

This lemma is relevant to the deformation of twistor fibers, see
forward to Remark \ref{remarkintersectionj}.

\subsection{Surfaces of bidegree (0,d)} We add this subsection for
  completeness.  Let $C$ be a curve of degree $d$ in $\CP$ and consider the
  surface $S=\pi_1^{-1}(C)$.  The surface $S$ has bidegree $(0,d)$. Then the
  restriction
\[\pi_2|_S\colon S\lra \CPv\]
is a cover of degree $d$ of $\CP$, branched over the dual curve
$C^\vee$. Indeed, $\pi_2|_S^{-1}(\ell)=\{(p,\ell):p\in\ell\cap C\}.$ The following
lemma states that any $(0,d)$ surface in $\FF$ arises in this way:

\begin{lemma}\label{bideg0d} Given an integer $d>0$, if $S$ is an
  integral surface in $|\Of(0,d)|$, then $S=\pi_1^{-1}(C)$, where $C\subset\CP$
  is a degree $d$ {integral} curve.
\end{lemma}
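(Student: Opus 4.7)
The plan is to identify the defining section of $S$ with a homogeneous polynomial of degree $d$ on $\CP$, and then to recover $C$ as its vanishing locus, transferring integrality across the pullback.

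First, I would note that $S \in |\Of(0,d)|$ means $S = V(g)$ for some non-zero $g \in H^0(\Of(0,d))$. From Lemma \ref{p2}(i) one reads $h^0(\Of(0,d)) = \binom{d+2}{2}$, which coincides with $h^0(\Oo_\CP(d))$. Via the natural identification coming from $\pi_1 : \FF \to \CP$, the section $g$ is determined (up to a nonzero scalar) by a homogeneous polynomial $f$ of degree $d$ in the coordinates $p_0, p_1, p_2$. Setting $C := V(f) \subset \CP$, by construction $C$ is a plane curve of degree $d$ and $S = \pi_1^{-1}(C)$ as subschemes of $\FF$.

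To pass from integrality of $S$ to integrality of $C$: the map $\pi_1$ is surjective with connected fibers (each a $\PP^1$ by Remark \ref{rk2001}), so pullback along $\pi_1$ preserves irreducibility and reducedness of divisors. Concretely, any factorization $f = f_1 f_2$ in $\Rr_d$ would give $g = \pi_1^{*}f_1\cdot\pi_1^{*}f_2$ and hence $S = \pi_1^{-1}(V(f_1))\cup \pi_1^{-1}(V(f_2))$, contradicting the integrality of $S$; the same argument rules out repeated factors in $f$. Hence $C$ is reduced and irreducible, i.e.\ integral.

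The main technical point is the identification of $H^0(\Of(0,d))$ with a space of degree-$d$ polynomials in Step 1. I would verify this by feeding the exact sequence \eqref{eqp2} in bidegree $(0,d)$ into Lemma \ref{p1}: the cohomology of the intermediate twist $\Oo_\CPCPv(-1,d-1)$ vanishes in degrees $0$ and $1$ by parts (i) and (ii) of that lemma, so the long exact sequence yields the desired isomorphism and closes the argument.
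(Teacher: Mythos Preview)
Your strategy---identify $H^0(\Of(0,d))$ with a space of degree-$d$ forms and read off $S$ as a pullback---is sound, but the step ``via the natural identification coming from $\pi_1$'' fails. By definition $\Of(0,d)=\pi_1^*\Oo_\CP(0)\otimes\pi_2^*\Oo_\CPv(d)=\pi_2^*\Oo_\CPv(d)$, so $\pi_1^*$ maps $H^0(\Oo_\CP(d))$ into $H^0(\Of(d,0))$, the wrong bidegree. The dimension match $h^0(\Of(0,d))=\binom{d+2}2=h^0(\Oo_\CP(d))$ is genuine but is realised by $\pi_2^*$: sections of $\Of(0,d)$ are polynomials in $\ell_0,\ell_1,\ell_2$, and one obtains $S=\pi_2^{-1}(C)$ for an integral degree-$d$ curve $C\subset\CPv$, not $\pi_1^{-1}$ of a curve in $\CP$. (The printed lemma and its ambient subsection carry the same $\pi_1/\pi_2$ slip; the $(0,1)$ prototype $\qH$ from Definition~\ref{qHm} is a $\pi_2$-preimage, not a $\pi_1$-preimage.) Your own computation via~\eqref{eqp2} in fact yields $H^0(\Of(0,d))\cong H^0(\Oo_\CPCPv(0,d))\cong H^0(\Oo_\CPv(d))$ by K\"unneth, which already points to the correct projection.

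With the indexing corrected, your argument is complete and genuinely different from the paper's. The paper argues geometrically: it asserts that one projection collapses $S$ to a curve while the other is a $d:1$ cover, sets $C$ equal to that image, and then recovers $\deg C=d$ from the triple intersection number $\Of(0,d)\cdot\Of(0,1)\cdot\Of(1,0)=d$ via Proposition~\ref{intersezione}. Your cohomological route is more direct: once the section space is identified, both $C$ and the pullback description of $S$ are immediate, and integrality transfers because factorisations of $f$ and of its pullback correspond bijectively.
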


\begin{proof}
Since $S\in|\Of(0,d)|$, we have $\dim\pi_1(S)\le1$, while $\pi_{2|S}$ is a
$d:1$ covering.  Moreover, it is easy to see that $\pi_1(S)$ is a plane
integral curve $C$ of degree $\tilde{d}$. Clearly, for a general line
$\Ll_m\subset\CP$, $C\cap\Ll_m=\{p_1,\ldots,p_{\tilde d}\}$.  Now $$S\cap
\Hm=\pi_1^{-1}(C\cap\Ll_m)= \bigcup_{k=1}^{\tilde d}\pi_1^{-1}(p_k).$$ Hence,
by Proposition \ref{intersezione}, for a general $q\in\CP$, we get $$\tilde
d=|(S\cap \Hm)\cap \qH|= \Oo_{F}(0,d)\cd \Oo_{F}(0,1)\cd\Oo_{F}(1,0)=d,$$ where
the last equality follows from Remark \ref{intersezione}.
\end{proof}

We omit the proof of the following result.

\begin{proposition}
Fix $d>0$ and let $S_1, S_2$ be integral surfaces of bidegree $(0,d)$. Let $C_i
= \pi_2(S_i)$ for $i=1,2$. Then $S_1$ and $S_2$ are isomorphic projective
varieties if and only if $C_1$ and $C_2$ are isomorphic projective varieties.
\end{proposition}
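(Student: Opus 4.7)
The strategy is to use the fibration $\pi_2|_{S_i}\colon S_i\to C_i$ (cf.\ Lemma~\ref{bideg0d}) as a canonical object attached to $S_i$, and to exploit the $\PGL(3,\CC)$-action on $\FF$ recalled in Subsection~\ref{Aut} to transport $S_1$ to $S_2$ once a projective equivalence of the base curves is in hand.

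For the direction $S_1\simeq S_2\Rightarrow C_1\simeq C_2$, let $\phi\colon S_1\to S_2$ be an isomorphism. When $d\ge3$ and $C_i$ is smooth, $S_i$ is a geometrically ruled surface over a curve of arithmetic genus $(d-1)(d-2)/2\ge1$, and the ruling $\pi_2|_{S_i}$ is intrinsic: its fibers are the only smooth rational curves $F$ satisfying $F^2=0$ and $-K_{S_i}\cdot F=2$, equivalently they are the fibers of the Albanese map. Hence $\phi$ sends ruling to ruling and descends to an isomorphism $C_1\simeq C_2$. When $C_i$ is singular one works with the minimal resolution of $S_i$ and tracks the ruling through the contraction. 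The cases $d=1,2$ are immediate, since every integral plane curve of degree at most $2$ is a line or a smooth conic and all such curves are projectively equivalent.

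For the converse $C_1\simeq C_2\Rightarrow S_1\simeq S_2$, the plan is to promote any abstract isomorphism $\psi\colon C_1\to C_2$ to a projective transformation $B\in\PGL(3,\CC)$ with $B\cdot C_1=C_2\subset\CPv$. Its action on $\FF$ via \eqref{PGL} then commutes with $\pi_2$ and carries $S_1=\pi_2^{-1}(C_1)$ to $\pi_2^{-1}(C_2)=S_2$. The existence of such $B$ is classical for smooth $C_i$: for $d\le3$ one uses direct classification (all smooth conics are projectively equivalent, and smooth plane cubics are classified by the $j$-invariant), while for $d\ge4$ one invokes adjunction $\omega_{C_i}\simeq\Oo_{C_i}(d-3)$ to distinguish the very ample line bundle $\Oo_{C_i}(1)$ intrinsically, so that $\psi$ must preserve the plane embeddings up to $\PGL(3,\CC)$.

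The main obstacle, in both directions, is the singular case, where the preceding arguments must be executed on the normalization $\widetilde C_i$ and the minimal resolution $\widetilde S_i$. One needs to verify that the plane model of $C_i$ (encoded by a specific $g^2_d$ on $\widetilde C_i$) and the ruling on $\widetilde S_i$ are preserved by the induced maps, after which the $\PGL(3,\CC)$-equivariance of $\pi_2$ concludes the argument.
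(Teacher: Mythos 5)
The paper explicitly omits the proof of this proposition, so there is no reference argument to compare against; I can only assess your proposal on its own merits.

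Your outline is essentially sound for \emph{smooth} base curves. For $d\le2$ both directions are trivial; for $d\ge3$ the $\Rightarrow$ direction correctly uses that a geometrically ruled surface over a curve of positive genus has an intrinsic ruling (any smooth rational curve with $F^2=0$ and $K_S\cdot F=-2$ must be a fiber, since $\PP^1$ cannot map nonconstantly to the base); and the $\Leftarrow$ direction correctly reduces to promoting an abstract isomorphism $C_1\simeq C_2$ to a projective equivalence, after which the $\PGL(3,\CC)$-equivariance of the projection does the rest.

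Two concrete gaps remain. First, in the $\Leftarrow$ direction for $d\ge5$, you attribute the intrinsicness of $\Oo_{C_i}(1)$ to adjunction $\omega_{C_i}\simeq\Oo_{C_i}(d-3)$, but adjunction alone determines $\Oo_{C_i}(1)$ only up to a $(d-3)$-torsion line bundle (a smooth degree-$d$ plane curve has genus $\binom{d-1}{2}\ge6$ here, so the torsion group is large). What you actually need is the classical theorem that a smooth plane curve of degree $\ge4$ carries a unique $g^2_d$; for $d=4$ the argument $\Oo_C(1)=\omega_C$ is exactly right, but for $d\ge5$ uniqueness is a stronger statement than adjunction plus degree bookkeeping. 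Second, and more seriously, the singular case is genuinely open in your write-up: you correctly identify that one must track a specific $g^2_d$ on the normalization and the ruling on a resolution, but you stop at ``one needs to verify.'' For singular integral plane curves, abstractly isomorphic curves of the same degree need not be projectively equivalent (distinct $g^2_d$'s on the normalization with matching singular fibers can exist), so there is real work to do --- one must either show that the resulting $\PP^1$-bundles $\PP(\Omega^1_\CP(1)|_{C_i})$ are nonetheless isomorphic, or restrict the proposition's scope. As stated, the proposition applies to all integral $(0,d)$-surfaces, so this case cannot be waved away.

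A smaller point: the proposition as printed sets $C_i=\pi_2(S_i)$, but since $\pi_2|_{S_i}$ is the $d{:}1$ cover of the full dual plane, $\pi_2(S_i)=\CPv$; the intended curve is $\pi_1(S_i)\subset\CP$ from Lemma~\ref{bideg0d}. You implicitly read the corrected version, which is the right thing to do, but it is worth noting.
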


\section{Classification of surfaces of bidegree $(1,1)$}\label{Kronecker}

This section provides a description of all $(1,1)$-surfaces contained in the
flag manifold, and a classification of them up to projective equivalence. We
then proceed to classify the \emph{smooth} surfaces up to unitary equivalence,
in preparation for the twistor geometry that is introduced in the next
section. Recall that, by Remark \ref{p2}, the set of the $(1,1)$-surfaces has
complex dimension $7$.

Given a $3\times 3$ complex non-scalar matrix $A$, we define
\begin{equation}\label{SA}
  S_A=\{(p,\ell)\in \FF\mid pA\ell = 0\}.
\end{equation}  
This is a surface of bidegree $(1,1)$, since its intersection with a generic
fiber of each projection is one point. Indeed, being the zero locus of an
element of $\Ss_{1,1}$ (cf.\ \eqref{Ss}), it belongs to
$|\Oo(1,1)|$. Conversely, any surface of bidegree $(1,1)$ in $\FF$ will be
defined by an element of $\Pp_{1,1}$, equivalently by a suitable matrix $A$.
The surfaces of bidegree $(1,1)$ are therefore parametrized by the matrices of
the Kronecker pencil (see e.g.\ \cite[\S10.3]{Landsberg}) of the form $sA+tI$,
where $A$ is a complex $3\times 3$ non-scalar matrix and $s,t\in\CC$, with
$s\ne0$.

For each class, we can (i) choose a representative with a zero eigenvalue, and
(ii) if there is a non-zero eigenvalue we can assume it equals $1$. This can be
achieved with suitable choices $s$ and $t$. By listing the resulting Jordan
canonical forms, we deduce

\begin{lemma}\label{matricesA}
Any $(1,1)$-surface is projectively equivalent to $S_A$, where $A$ is one
of the following matrices:
\begin{align*}
&A_1=\left(\begin{array}{ccc}
  0&0&0\\
  0&1&0\\
  0&0&\la
\end{array}\right),\quad
A_2=\left(\begin{array}{ccc}
  0&0&0\\
  0&0&0\\
  0&0&1
  \end{array}\right),\quad
A_3=\left(\begin{array}{ccc}
  0&1&0\\
  0&0&0\\
  0&0&1
\end{array}\right),\\
\nonumber
& \hphantom{mmmmmm}
A_4=\left(\begin{array}{ccc}
  0&1&0\\
  0&0&0\\
  0&0&0
\end{array}\right),\quad
A_5=\left(\begin{array}{ccc}
  0&1&0\\
  0&0&1\\
  0&0&0
\end{array}\right),
\end{align*}
where  $\la\in\CC\sm\{0,1\}$.
\end{lemma}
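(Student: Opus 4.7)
The strategy is to translate the classification of $(1,1)$-surfaces into a standard linear-algebra problem about $3\times 3$ matrices modulo conjugation, scalar shift and rescaling, and then to run through the list of Jordan canonical forms.

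First I would record the precise equivalence being used. Write $f_A(p,\ell):=pA\ell\in\Pp_{1,1}$. Since $\dim\Ss_{1,1}=8$ and $\Ss_{1,1}=\Pp_{1,1}/(p\ell)$, the map $A\mapsto f_A\bmod(p\ell)$ identifies the $8$-dimensional quotient of $3\times 3$ matrices modulo $\CC I$ with $\Ss_{1,1}$; in particular every $S\in|\Of(1,1)|$ is $S_A$ for a non-scalar $A$, and $S_A=S_{A'}$ iff $A'=tA+sI$ for some $t\in\CC^\ast$, $s\in\CC$. Next, for $B\in\SL(3,\CC)$ the automorphism \eqref{PGL} sends $(p,\ell)\mapsto(pB^{-1},B\ell)$, and the identity $pA\ell=(pB^{-1})(BAB^{-1})(B\ell)$ shows that $S_A$ is carried to $S_{BAB^{-1}}$. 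Combining these, two surfaces $S_A$ and $S_{A'}$ are projectively equivalent whenever
\[
A'=t\,BAB^{-1}+sI,\qquad B\in\GL(3,\CC),\ t\in\CC^\ast,\ s\in\CC.
\]

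With this equivalence in hand, the classification reduces to listing Jordan canonical forms up to the additional freedom of adding a scalar and rescaling. I would conjugate $A$ to its Jordan form over $\CC$ and split into cases by eigenvalue multiplicities. If $A$ has three distinct eigenvalues $\alpha_1,\alpha_2,\alpha_3$, subtracting $\alpha_1 I$ and dividing by $\alpha_2-\alpha_1$ produces $\mathrm{diag}(0,1,\lambda)$ with $\lambda=(\alpha_3-\alpha_1)/(\alpha_2-\alpha_1)\notin\{0,1\}$, giving $A_1$. If there are two eigenvalues with the double one diagonalizable, one normalizes to $\mathrm{diag}(0,0,1)$, giving $A_2$. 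If the double eigenvalue sits in a $2\times 2$ Jordan block while the other eigenvalue is distinct, one first normalizes to $\mathrm{diag}(0,0,1)$ plus a nilpotent entry, obtaining $A_3$. If $A$ has a single triple eigenvalue, the diagonalizable case is excluded because $A$ is non-scalar, leaving either one $2\times 2$ plus one $1\times 1$ Jordan block (giving $A_4$ after subtracting the eigenvalue) or a single $3\times 3$ Jordan block (giving $A_5$).

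The one mildly delicate step, and the place where a careful check is needed, is the normalization of the superdiagonal entries in the non-diagonal Jordan cases. For $A_3$ and $A_4$ the relevant entry is a single scalar $c$ that can be rescaled to $1$ by conjugating with a diagonal matrix $\mathrm{diag}(1,c,1)$; for $A_5$ the two superdiagonal entries $c_1,c_2$ can similarly be rescaled simultaneously to $(1,1)$ by conjugation with a suitable diagonal matrix $\mathrm{diag}(1,a,b)$. Combined with the freedom to subtract a scalar so that one eigenvalue vanishes and to rescale so that a nonzero eigenvalue equals $1$, this exhausts all Jordan types and yields exactly the five canonical representatives $A_1,\dots,A_5$ listed in the statement. \hfill$\square$
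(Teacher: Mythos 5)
Your proposal is correct and follows exactly the route the paper itself (tersely) indicates: parametrize $(1,1)$-surfaces by matrices modulo $\CC I$ and rescaling, note that conjugation by $B\in\GL(3,\CC)$ via \eqref{PGL} sends $S_A$ to $S_{BAB^{-1}}$, then shift and rescale to put one eigenvalue at $0$ and a second (if it exists) at $1$, and enumerate Jordan forms. The extra normalization of the superdiagonal entries by a diagonal conjugation is exactly the right detail to spell out, and the remaining generator $\kappa$ of $\mathrm{Aut}(\FF)$ from \eqref{kappa} only replaces $A$ by $A^\vee$, which is always similar to $A$, so it does not change the list.
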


We proceed to give geometric descriptions of some of the resulting classes of
surfaces.

\subsection{Smooth (1,1)-surfaces}
This class is a natural one to distinguish. Up to projective equivalence, it
can only arise from matrices of type $A_1$.

\begin{proposition}\label{DelP}
Let $S$ be a smooth $(1,1)$-surface in $\FF$. Then $S$ is a del Pezzo surface
of degree $6$, and is unique up to biholomorphism.
\end{proposition}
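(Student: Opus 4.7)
The plan is to combine the classification in Lemma \ref{matricesA} with the adjunction formula and the intersection calculus of Remark \ref{formulac2}, and then invoke the classical uniqueness of del Pezzo surfaces of degree $6$.

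First I would reduce to a specific canonical form. By Lemma \ref{matricesA}, up to projective equivalence $S = S_{A_i}$ for some $i\in\{1,\ldots,5\}$. A direct Jacobian check (or the explicit geometric descriptions that the paper intends to spell out next) eliminates the matrices $A_2,\ldots,A_5$: each of them produces either a reducible surface or a singular one (for instance $S_{A_2}$ contains the fiber $\pi_1^{-1}([0{:}0{:}1])$ with multiplicity issues, and the nilpotent/degenerate Jordan blocks in $A_3,A_4,A_5$ force singularities along coordinate subvarieties). So a smooth $S$ must be projectively equivalent to $S_{A_1}$ for some $\lambda\in\CC\setminus\{0,1\}$.

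Next I would verify that $-K_S$ is ample and compute its square. Since $S\in|\Of(1,1)|$, adjunction together with \eqref{can} yields
\[
\omega_S \;\cong\; \bigl(\omega_\FF\otimes\Of(1,1)\bigr)\big|_S \;\cong\; \Of(-1,-1)\big|_S,
\]
so $-K_S\cong\Of(1,1)|_S$. Because $\Of(1,1)$ is the restriction of the very ample Segre line bundle $\Oo_\CPCPv(1,1)$, its further restriction to $S$ is very ample, and in particular $-K_S$ is ample. Specializing Remark \ref{formulac2} to $(a,b)=(1,1)$ gives
\[
K_S^2 \;=\; 3+3-4-4-16+12+12 \;=\; 6.
\]
Therefore $S$ is a smooth del Pezzo surface of degree $6$.

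Finally, I would appeal to the classical fact that any smooth del Pezzo surface of degree $6$ is biholomorphic to the blow-up of $\CP$ at three non-collinear points, and that this surface is unique up to biholomorphism. This immediately gives the uniqueness assertion independently of the parameter $\lambda$ appearing in $A_1$ (so that although different values of $\lambda$ can produce projectively inequivalent surfaces, they all yield the same abstract complex surface). The main obstacle I anticipate is the first step: verifying cleanly that $S_{A_2},\ldots,S_{A_5}$ are indeed singular or reducible, since each requires a separate (though elementary) argument; everything afterwards is a formal consequence of adjunction, the intersection numbers of Proposition \ref{intersezione}, and the classical uniqueness of the sextic del Pezzo.
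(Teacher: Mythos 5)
Your proof is correct and, in its essential content, follows the paper's own route: apply adjunction together with $\omega_\FF\cong\Of(-2,-2)$ to get $-K_S\cong\Of(1,1)|_S$, compute $K_S^2=6$ from Remark~\ref{formulac2}, and invoke the classical uniqueness of the smooth degree-$6$ del Pezzo surface (the paper cites Dolgachev, \S8.4.2). The one genuine difference is that your opening step --- passing to a Jordan canonical form $A_i$ via Lemma~\ref{matricesA} and then ruling out $A_2,\ldots,A_5$ by Jacobian analysis --- is superfluous: the hypothesis that $S$ is smooth is all that the adjunction and intersection-number argument needs, and nothing downstream uses the Jordan type of the defining matrix. (In fact the paper establishes the singularity/reducibility dichotomy for $A_2,\ldots,A_5$ only \emph{after} this proposition, in Proposition~\ref{classification-singular}, so building your proof on that classification would be circular in the paper's logical order.) On the plus side, you are a bit more careful than the paper in noting explicitly that $-K_S$ is ample, being the restriction of the very ample Segre bundle $\Oo_\CPCPv(1,1)$; the paper leaves this implicit when it declares $S$ a del Pezzo, but it is a necessary part of the definition and worth spelling out.
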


\begin{proof}
By the adjunction formula we have $\omega_S \cong\Of(-1,-1)_{|S}.$ Hence $S$ is
a del Pezzo surface of degree 6 (by Remark~\ref{formulac2}). From the
classification in \cite[\S8.4.2]{dolg} we get the uniqueness up to
biholomorphism.
\end{proof}

We can describe these del Pezzo surfaces more explicitly:

\begin{proposition}
If $A$ is a $3\times 3$ complex matrix $A$ that admits three distinct
eigenvalues, then the associated surface $S_A$ is smooth. It can be realized as
the blowup (i) of $\CP$ via $\pi_1$ at three points corresponding to the left
eigenvectors of $A$, or (ii) of $\CPv$ via $\pi_2$ at three points
corresponding to right eigenvectors of $A$.
\end{proposition}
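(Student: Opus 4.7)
Since $A$ has three distinct eigenvalues it is diagonalizable, so I first reduce to the diagonal case: write $BAB^{-1}=D=\mathrm{diag}(\alpha_1,\alpha_2,\alpha_3)$ for some $B\in\GL(3,\CC)$, and observe that under the $\PGL(3,\CC)$-action \eqref{PGL} the element $B$ maps $S_A$ biholomorphically onto $S_D$, since $pA\ell=(pB^{-1})D(B\ell)$; moreover, $B$ carries the left (resp.\ right) eigenvectors of $A$ to those of $D$. It therefore suffices to work with $A=D$, where $S_A\subset\FF$ is cut out by $\alpha_1 p_0\ell_0+\alpha_2 p_1\ell_1+\alpha_3 p_2\ell_2=0$, the left eigenvectors are the coordinate points $e_1,e_2,e_3\in\CP$, and the right eigenvectors are $e_1^\vee,e_2^\vee,e_3^\vee\in\CPv$.

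For smoothness I would apply the Jacobian criterion inside $\CPCPv$ to the defining pair $f_1=p\ell$ and $f_2=pA\ell$. In affine coordinates the gradients read $(\ell,p)$ and $(A\ell,pA)$, and they are linearly dependent at $(p,\ell)$ exactly when $A\ell=\mu\ell$ and $pA=\mu p$ for a common scalar $\mu$. In the diagonal model this forces $p=e_i$ and $\ell=e_i^\vee$ for some $i$, but then $p\ell=e_ie_i^\vee=1\ne0$, so $(p,\ell)\notin\FF$; hence $S_A$ has no singular points.

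Next I would describe $\pi_1|_{S_A}\colon S_A\to\CP$. For $p\in\CP$ the fiber is cut out of $\pi_1^{-1}(p)\simeq\PP^1$ by the single additional linear condition $pA\ell=0$. If $p$ is not a left eigenvector of $A$, the covectors $p$ and $pA$ are independent and the fiber collapses to one point; if $p=e_i$, then $pA=\alpha_i p$, so $pA\ell=0$ is implied by $p\ell=0$ and the entire fiber $\pi_1^{-1}(e_i)\simeq\PP^1$ lies in $S_A$. Thus $\pi_1|_{S_A}$ is a proper birational morphism between smooth projective surfaces whose exceptional locus consists of three disjoint smooth rational curves, each contracted to one of the three distinct points $e_1,e_2,e_3$. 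The structure theorem for birational morphisms of smooth surfaces (equivalently, the universal property of blowing up combined with Zariski's Main Theorem) then identifies $S_A$ with $\mathrm{Bl}_{\{e_1,e_2,e_3\}}\CP$, proving (i); statement (ii) follows by the mirror argument applied to $\pi_2|_{S_A}$ and the right eigenvectors.

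The main obstacle is the smoothness step: the crucial input is the biorthogonality $e_ie_j^\vee=\delta_{ij}$ between left and right eigenvectors of $D$, which is exactly what ensures that every potentially singular point of $S_A$ fails the incidence relation $p\ell=0$ and so sits outside $\FF$.
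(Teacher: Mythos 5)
Your proof is correct, and the fiber analysis of $\pi_1|_{S_A}$ — one point over generic $p$, the whole fiber $\pi_1^{-1}(p)\simeq\PP^1$ when $p$ is a left eigenvector — is exactly the paper's argument. The main difference is that you make explicit two steps the paper leaves tacit: you verify smoothness directly via the Jacobian criterion (reducing to the diagonal model and using the biorthogonality $e_ie_j^\vee=\delta_{ij}$ to show no critical point of $(p\ell,pA\ell)$ lies on $\FF$), and you explicitly invoke the structure theorem for birational morphisms of smooth surfaces to pass from ``birational with three exceptional $\PP^1$s'' to ``blowup''. The paper, by contrast, simply asserts the blowup conclusion once the fibers are identified, with smoothness really being established elsewhere (the Jacobian computation appears later, in the singularity classification Proposition \ref{classification-singular}, without the diagonal reduction). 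Your version is therefore logically more self-contained: the smoothness step is genuinely needed before one can legitimately conclude the map is a blowup, and you order the steps so that this is addressed first. The preliminary reduction to $A=D$ is harmless and streamlines the eigenvector bookkeeping, though the paper gets by without it.
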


\noindent By left (respectively, right) eigenvector, we mean a row
(respectively, column) vector satisfying the obvious equation. Of course, the
left eigenvectors are transposes of the (more usual) right eigenvectors of
$A^\vee$. But the left-right formalism is more in keeping with our approach.

\begin{proof}
For (i), let $p_1,p_2,p_3$ be points in $\CP$ corresponding to three linearly
independent left eigenvectors. We prove that the restriction of $\pi_1$ to
$S_A$ is a blowup of $\CP$ in such points. This is indeed a degree $6$ Del
Pezzo surface.

Given $q\in\CP$, we have
\[ \pi_1^{-1}(q)\cap S_A = \big\{(q,m)\in\CPCPv \mid qm=0,\ qAm=0\big\}.\]
The two conditions $qm=0$ and $qAm=0$ are dependent if and only if $q$ is a
left eigenvector of $A$. In this case, we have $\pi_1^{-1}(q)\subset
S_A$. Since the set $\pi_1^{-1}(q)\cap S_A$ is a point for generic $q$, we
conclude that $(\pi_1)_{|S_A}$ is the blowup of $\CP$ at $p_1,p_2,p_3$.

Case (ii) is similar.
\end{proof}

\begin{remark}\label{Xratio}
If the three left eigenvectors of $A$ correspond to points $p_1,p_2,p_3$ in
$\CP$, then the right eigenvectors of $A$ correspond to the lines $p_2\times
p_3,\>p_3\times p_1,\>p_1\times p_2$ in $\CPv$; see Figure~\ref{fig.A1}). A
triple of eigenvalues $(\alpha_1,\alpha_2,\alpha_3)$ is projectively equivalent
to $(0,1,\la)$, where $\la=(\alpha_3-\alpha_1)(\alpha_2-\alpha_1)$. Let
\begin{equation}\label{Lambda}
\Lambda = \Big\{\la,\ \frac1\la,\  1-\la,\  \frac1{1-\la},\ 
\frac\la{\la-1},\ \frac{\la-1}\la\Big\}.
\end{equation}
By permuting the eigenvalues, we see that the sets $\{0,1,\la'\}$ with
$\la'\in\Lambda$ are all projectively equivalent. We may regard $\la$ as a real
cross ratio.
\end{remark}

\begin{figure}
  \includegraphics[scale=1]{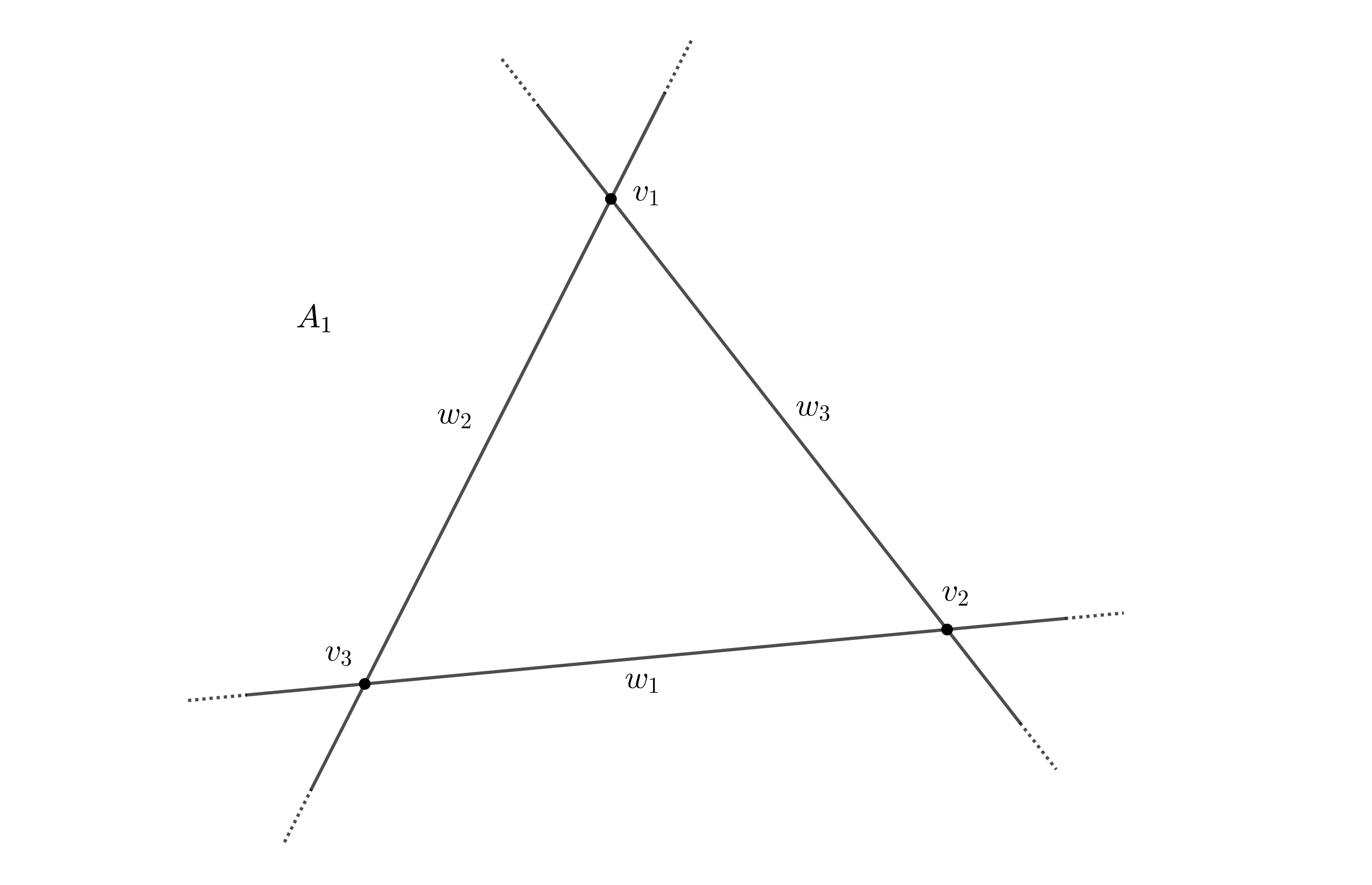}
  \vspace{-10pt}
\caption{A smooth $(1,1)$-surfaces of type $A_1$ can be seen as the blowup at
  three points in general position in either $\CP$ or $\CPv$.}
  \label{fig.A1}
\end{figure}

\subsection{Singular (1,1)-surfaces}
We now describe the set of singular surfaces of bidegree $(1,1)$. We shall
prove the next proposition directly using linear algebra, and then reconcile
its statements with Lemma \eqref{matricesA}.

\begin{proposition}\label{classification-singular}
The family of singular $(1,1)$-surfaces contained in $\FF$ is parametrized by
an irreducible variety of dimension $6$ and each irreducible singular
$(1,1)$-surface has exactly one singular point. The family of reducible
$(1,1)$-surfaces in $\FF$ has dimension $4$ and each reducible $(1,1)$-surface
is of the form $\qH\cup \Hm$, for some $(q,m)\in\CPCPv$.
\end{proposition}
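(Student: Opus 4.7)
The plan is in three stages: (a) classify reducible $(1,1)$-surfaces and their parameter space; (b) show that the singular locus $\Delta \subset |\Of(1,1)|$ is irreducible of dimension $6$; (c) count singular points on irreducible singular surfaces. For (a), if $S \in |\Of(1,1)|$ is reducible then by additivity of bidegree its components must have bidegrees $(1,0)$ and $(0,1)$, and the characterization of surfaces of these bidegrees given earlier forces $S = \qH \cup \Hm$ for unique $(q,m) \in \CPCPv$. Hence the reducible family is parametrized by $\CPCPv$ of dimension $4$. In matrix terms these are the $S_A$ with $A = mq$ of rank one, i.e.\ the classes $A_2$ (when $qm \ne 0$) and $A_4$ (when $qm = 0$) of Lemma~\ref{matricesA}.

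For (b), I first record a linear-algebraic criterion: a point $(p,\ell) \in \FF$ is singular on $S_A$ if and only if $pA = \mu p$ and $A\ell = \mu \ell$ for a common $\mu \in \CC$. Writing $F = pA\ell$, the ambient differential $dF = (A\ell)^\vee dp + (pA)\, d\ell$ restricts to zero on $T\FF = \ker d(p\ell)$ iff it is proportional to $d(p\ell) = \ell^\vee dp + p\, d\ell$, which is exactly the eigenvector pair condition; the relation $p\ell = 0$ is automatic since $(p,\ell) \in \FF$. In particular, when $A$ has three distinct eigenvalues the standard bi-orthogonality $p_i\ell_j = \delta_{ij}$ of left/right eigenvector bases rules out singular points, so $\Delta$ consists precisely of the images of matrices with a repeated eigenvalue. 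To obtain dimension and irreducibility I introduce
\[ I = \{(S,x) \in |\Of(1,1)| \times \FF \mid x \in \op{Sing}(S)\}, \]
with projections $\sigma\colon I \to |\Of(1,1)|$ and $\tau\colon I \to \FF$. Since $\Of(1,1)$ is very ample (it furnishes the Segre-type embedding $\FF \hookrightarrow \PP^8$), the $1$-jet evaluation $H^0(\Of(1,1)) \to J^1_x\Of(1,1)$ is surjective at every $x \in \FF$ onto a $(1+3)$-dimensional space, whence $\tau^{-1}(x) \cong \PP^3$. Thus $I$ is a $\PP^3$-bundle over the irreducible variety $\FF$, itself irreducible of dimension $6$, and $\Delta = \sigma(I)$ is irreducible. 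The criterion above yields $\op{Sing}(S_{A_3}) = \{([0{:}1{:}0],\,[1{:}0{:}0]^\vee)\}$, a single point, so $\sigma$ is generically finite and $\dim \Delta = 6$.

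For (c), any irreducible singular $(1,1)$-surface is projectively equivalent to $S_A$ for some $A \in \{A_3, A_5\}$: Lemma~\ref{matricesA} enumerates the classes, and we have already ruled out the smooth class $A_1$ together with the rank-one reducible classes $A_2, A_4$. Solving the eigenvector equations together with $p\ell = 0$ directly for $A_3$ and $A_5$ produces in each case a unique point of $\FF$; since projective equivalence preserves the singular locus, the same holds for $S$.

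The main obstacle is the irreducibility of $\Delta$: a direct orbit-by-orbit analysis from Lemma~\ref{matricesA} would uncover two apparently distinct six-dimensional $\PGL(3)$-orbits (those of $A_3$ and $A_5$), and would then require checking that their closures in $|\Of(1,1)|$ coincide through a delicate family-of-matrices degeneration. The incidence-variety approach above sidesteps this by realizing $\Delta$ directly as the image of an explicit irreducible bundle over $\FF$.
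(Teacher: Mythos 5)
Your argument is correct, and it establishes all three assertions, but the route to irreducibility and dimension of the singular family is genuinely different from the paper's. The paper proceeds by linear algebra on the matrix $A$: it establishes the eigenvector-pair criterion for a singular point (your same criterion), argues that $S_A$ is singular precisely when $A$ has a repeated eigenvalue, concludes that the singular family is a codimension-$1$ discriminant-type locus in the $7$-dimensional $|\Of(1,1)|$ (appealing implicitly to irreducibility of the discriminant of the characteristic polynomial), and then runs a case analysis on the Jordan type of the repeated eigenvalue: a $1$-dimensional eigenspace gives exactly one singular point via a generalized eigenvector, and a $2$-dimensional eigenspace forces the rank-one (hence reducible) case. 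Your incidence-variety argument replaces that dimension/irreducibility step with the $\PP^3$-bundle
\[ I = \{(S,x) : x \in \op{Sing}(S)\} \lra \FF, \]
which is more self-contained (no appeal to irreducibility of the characteristic discriminant) and, as you note, sidesteps the worry that the $\PGL(3)$-orbits of $A_3$ and $A_5$ might not lie in a common irreducible closed set. What the paper's Jordan analysis buys, that your incidence argument does not, is the ``if and only if'' characterization of $\op{Sing}(S_A) \ne \es$ by a repeated eigenvalue: you prove only the easy direction (distinct eigenvalues $\Rightarrow$ smooth, via bi-orthogonality), so your sentence ``$\Delta$ consists \emph{precisely} of the images of matrices with a repeated eigenvalue'' overstates what you have shown — fortunately it is not load-bearing, since the dimension count comes from $I$ and the exactly-one-singular-point count comes from direct inspection of $A_3$ and $A_5$. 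Your treatment of the reducible case via bidegree additivity ($(1,1) = (1,0) + (0,1)$ being the only decomposition in $\op{Pic}(\FF) \cong \ZZ^2$) is cleaner than the paper's, which derives $\qH \cup \Hm$ out of the Jordan case (2). One small note: your singular point $([0{:}1{:}0],[1{:}0{:}0]^\vee)$ for $S_{A_3}$ is correct; the coordinates listed in the paper's post-proof summary for $A_3$ and $A_5$ have $p$ and $\ell$ transposed and the stated points do not in fact lie on the respective surfaces.
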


\begin{proof}
Let $S_A$ be a $(1,1)$-surface defined by a matrix $A$. The Jacobian of the map
$\CC^3\times(\CC^3)^\vee\to\CC^2$ defined by
\begin{equation}
  \big((p_0,p_1,p_2),\>(\ell_0,\ell_1,\ell_2)\big)\longmapsto
  \left(\begin{array}{c}pA\ell\\p\ell\end{array}\right)
    \label{surface11}\end{equation}
can be represented by the matrix
\[ \left(\!\begin{array}{cc}A\ell & Ap\\\ell&p\end{array}\!\right).\]
It has rank less than $2$ at the point $(p,\ell)\in\FF$ if and only if $p$ is a
left eigenvector and $\ell$ is a right eigenvector of $A$, with the same
eigenvalue. Imposing these conditions, given that $p\ell=0$, we automatically
get $(p,\ell)\in S_A$.

Thus $S_A$ is singular if and only if the system
\begin{equation}\label{system-sing}
  \left\{\begin{array}{l}
pA=\la p\\
A\ell=\la\ell\\
p\ell=0
\end{array}\right.
\end{equation}
admits a non-trivial solution $(p,\ell)\in\CPCPv$. We want to prove that this
happens if and only if $\la$ has algebraic multiplicity greater than one. This
would imply that the family of singular $(1,1)$-surfaces has codimension $1$ in
the $7$-dimensional family of $(1,1)$-surfaces,  and (with reference to
Lemma \ref{matricesA}) is irreducible.

Suppose that $p$ is a left eigenvector of $A$ with eigenvalue $\la$ of
algebraic multiplicity $1$, and that $\mu$ is an eigenvalue of $A$ distinct
from $\la$. If $(A-\mu I)^2\ell=0$ then
\[ 0=p(A-\mu I)^2\ell= (\la-\mu)^2p\ell\]
so $p\ell=0$. It follows that the 2-dimensional annihilator of the row vector
$p$ is spanned by right eigenvectors or generalized eigenvectors with
eigenvalues distinct from $\la$. Hence, there is no non-zero column vector
$\ell$ that solves \eqref{system-sing}.

Suppose that $\la$ is an eigenvalue of $A$ of algebraic multiplicity
at least $2$. Denote by $V_\la$ and $W_\la$ the respective left and
right eigenspaces for $\la$. We have the following two cases.\\

\noindent(1) Suppose that $V_\la=\{p\}$ has dimension $1$, and set
$W_\la=\{\ell\}$.  Then there exists a generalized eigenvector $\tilde p$,
satisfying $\tilde p(A-\la I)=p$, and
\begin{equation}\label{genei}
  p\ell = \tilde p(A-\la I)\ell = 0.
\end{equation}  
It follows that $(p,\ell)$ is a solution of \eqref{system-sing}. The argument
above shows that there are no more solutions (projectively speaking).\\

\noindent(2) Suppose that $V_\la$ has dimension 2. In this case, $\PP(V_\la)$
represents an element $m\in\CPv$, and $\PP(W_\la)$ represents an element
$q\in\CP$. There are two subcases.
 \begin{enumerate}
 \item[(i)] Firstly, assume that there is an eigenvalue $\mu$ distinct from
   $\la$, and that $q$ is a left eigenvector for $\mu$. Since $A(p\times q)$ is
   a multiple of $(pA)\times(qA)$ by \eqref{cross}, it follows that $p\times
   q\in W_\la$, and $(p,p\times q)$ solves \eqref{system-sing} for any $p\in
   V_\la$. These solutions give rise to the curve
 \[ L_{q,m} = \{(p,\ell):p\in m,\ \ell\ni q\}\]
of singular points in $S_A$. It follows from \eqref{Hcross} that $S_A$ is the
reducible surface $\qH\cup\Hm$, where $qm\ne0$.

\item[(ii)] Secondly, assume that $A$ has a unique eigenvalue $\la$ (which we
  could take to be $0$). Then the solutions to \eqref{system-sing} are given by
  \[ \{(q,\ell): \ell\in W_\la\}\cup\{(p,m): p\in V_\la\} = 
  \pi_1^{-1}(q)\cup\pi_2^{-1}(m).\] We again have $S_A=\qH\cup \Hm$, but in
  this subcase $mq=0$.
 \end{enumerate}
Since we have seen that each reducible $(1,1)$-surface is of the form $\qH\cup
\Hm$ for $(q,m)\in \CP\times\CPv$, we conclude that the set of reducible
$(1,1)$-surfaces has complex dimension $4$.
  \end{proof}

\begin{figure}
\includegraphics[scale=.7]{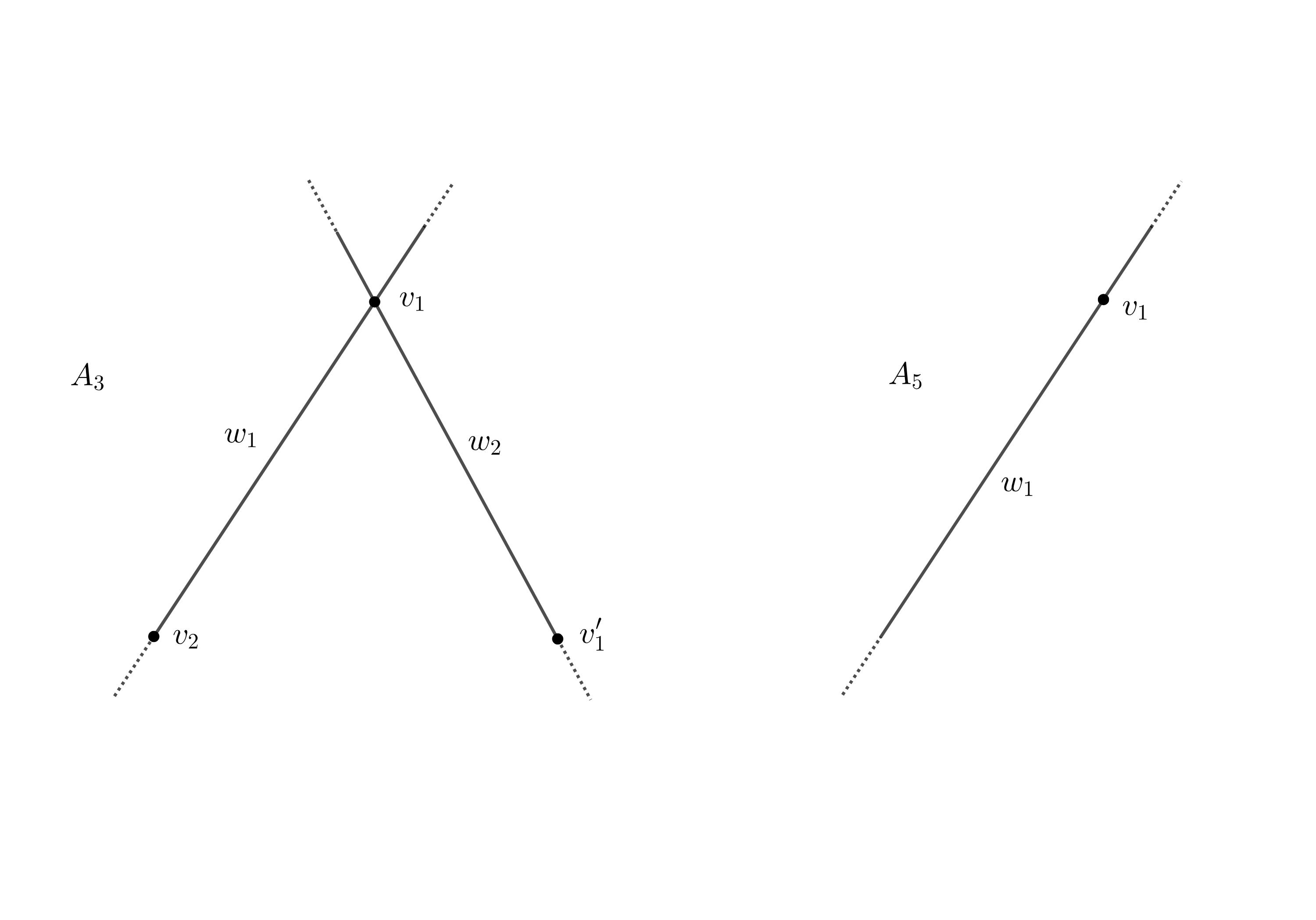}
\vspace{-50pt}
\caption{Non-smooth non-reducible $(1,1)$-surfaces $S_{A}$ have only one
  singular point. The disposition of the eigenvectors of $A$ and $A^\vee$ is
  described here.}
\end{figure}

\begin{figure}
  \includegraphics[scale=.7]{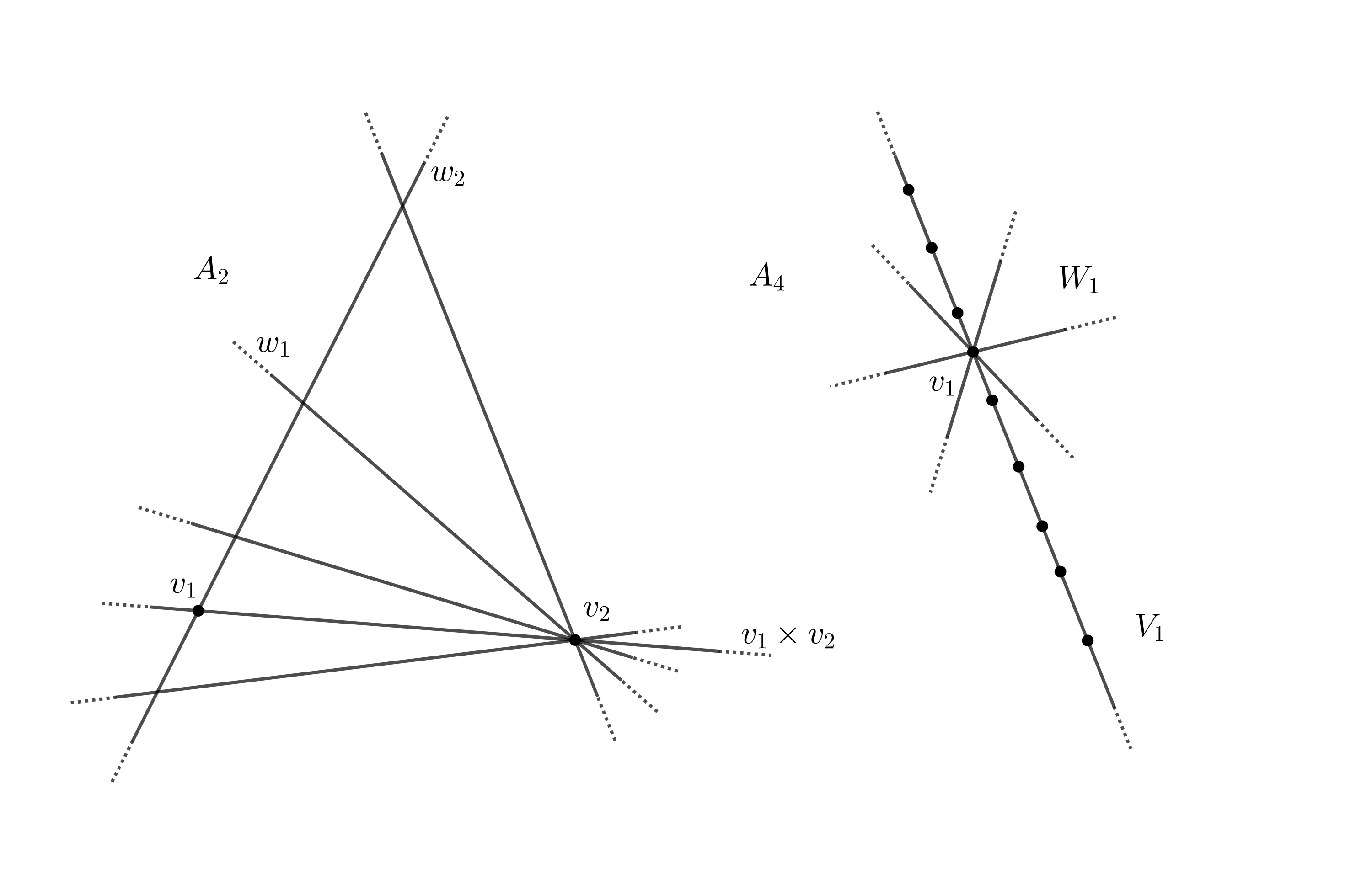}
  \vspace{-30pt}
\caption{Non-smooth reducible $(1,1)$-surfaces are singular along an
  element of $\bar\Vv$: either a curve $L_{q,m}$ for the case
  $A_2$ or the union of two intersecting fibers $\PUqm$ for $A_{4}$.}
\end{figure}

We end this section by summarizing the behaviour of the five canonical
cases: 
\begin{itemize}
\item $S_{A_1}$ is smooth.
\item $S_{A_2}={}_{[0:0:1]}H\cup H_{[0:0:1]}$ is reducible and singular on
\[ L_{[0:0:1],[0:0:1]}=\{([p_0:p_1:0],[\ell_0:\ell_1:0]^\vee)\mid
  p_0\ell_0+p_1\ell_1=0\}.\]
\item $S_{A_3}$ has only one singular point $([1:0:0],[0:1:0])$.
\item 
  $S_{A_4}={}_{[0:1:0]}H\cup H_{[1:0:0]}$ is reducible and singular on
\[\pi_1^{-1}[1:0:0]\cup \pi_2^{-1}[0:1:0]=\{([p_0:0:p_2],[0:\ell_1:\ell_2]\mid
  p_2\ell_2=0\}.\]
\item $S_{A_5}$ has a unique singular point $([1:0:0],[0:0:1])$.
\end{itemize}

\begin{remark}
Dolgachev in \cite{dolg} gives a classification of the singular del Pezzo surfaces of degree 6. In particular there exist two cases, both with one singular point: case (ii) of \cite[Table 8.4]{dolg} which contains 4 lines
and case (iii) containing 2 lines. It is easy to see that $S_{A_3}$ is of type (ii) and  $S_{A_5}$ is of type (iii). 
\end{remark}

\subsection{Unitary equivalence}\label{U3}
The realization of the flag manifold as the twistor space $\FF$ of $\CP$ will
require us to restrict to \textit{unitary automorphisms} of $\FF$, as defined
in Subsection \ref{Aut}.

Given a $(1,1)$-surface $S_A$ in $\FF$, we know that $A$ is projectively
equivalent to one of the matrices $A_i$ of Lemma \ref{matricesA}. Hence
there is a non-singular matrix $C$ such that
\[ C^{-1}\!AC=A_i,\]
By `$QR$-factorization', $C$ can itself be decomposed as $C=QR$ where $Q\in
U(3)$ and $R$ is upper triangular. Hence,
\[ Q^{-1}\!AQ = RA_iR^{-1}\]
is upper triangular. In other words, up to unitary equivalence, we can still
assume (at least) that $A$ is upper triangular.

Let us begin with the smooth case. Recall that $S_A$ is smooth if and only if
$A$ has three distinct eigenvalues. After adding a scalar multiple and
re-scaling, we can assume that these are $0,1,\la$ with $\la\in\CC\sm\{0,1\}$,
so that
\begin{equation}\label{forma-canonica}
A = \left(\begin{array}{ccc}
  0&a&b\\
  0&1&c\\
  0&0&\la
\end{array}\right).
\end{equation}

However, this representation is not unique. Let
\begin{equation*}
X = \left(\begin{array}{ccc}
  e^{i\vartheta_1+i\vartheta_2}&0&0\\
  0&e^{i\vartheta_2}&0\\
  0&0&1
\end{array}\right)
\end{equation*}
be a diagonal unitary matrix. Then
\begin{equation}\label{Torbit}
XAX^{-1} = \left(\begin{array}{ccc}
  0&ae^{i\vartheta_1}&be^{i\vartheta_1+i\vartheta_2}\\
  0&1&ce^{i\vartheta_2}\\
  0&0&\la
\end{array}\right)
\end{equation}
parametrizes the orbit of $A$ under the standard maximal torus $T^2$.

\begin{proposition}\label{classofA}
Let $A,A'$ be the matrices 
defined by respectively
\eqref{forma-canonica} and
 \[A' = \left(\begin{array}{ccc}
  0&a'&b'\\
  0&1&c'\\
  0&0&\la'
 \end{array}\right).\] 
Then $A$ and $A'$ are unitarily equivalent if and only if
$\la=\la'$ and $A'$ has the form \eqref{Torbit}.
\end{proposition}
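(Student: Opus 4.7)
The plan is to prove the equivalence by a direct analysis of the eigenspaces of $A$ and $A'$. The reverse implication is essentially tautological: if $\la=\la'$ and $A'=XAX^{-1}$ for the diagonal unitary matrix $X$ displayed in \eqref{Torbit}, then $A$ and $A'$ are unitarily conjugate by definition.

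For the forward direction, I would assume $A'=B^{-1}AB$ with $B\in U(3)$. Since $A$ and $A'$ are similar they share the spectrum; the eigenvalues being $\{0,1,\la\}$ and $\{0,1,\la'\}$ with $\la,\la'\notin\{0,1\}$, this immediately forces $\la=\la'$.

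The heart of the argument is to show that $B$ must be diagonal. As the three eigenvalues $0,1,\la$ are distinct, each eigenspace is one-dimensional, and the identity $AB=BA'$ implies that $B$ carries the $\mu$-eigenspace of $A'$ onto that of $A$ for every $\mu\in\{0,1,\la\}$. A direct computation from \eqref{forma-canonica} shows that the $0$-eigenspace of both $A$ and $A'$ is $\CC e_1$ with $e_1=(1,0,0)^\vee$, while the $1$-eigenspaces, spanned respectively by $(a,1,0)^\vee$ and $(a',1,0)^\vee$, both lie in the plane $\langle e_1,e_2\rangle$. The condition $Be_1\in\CC e_1$ forces $B_{21}=B_{31}=0$, and then $B(a',1,0)^\vee\in\langle e_1,e_2\rangle$ gives $B_{32}=0$. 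Hence $B$ is upper triangular; a standard computation with $B^{*}B=I$ then shows any upper triangular unitary matrix is in fact diagonal.

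Writing $B=\mathrm{diag}(\zeta_1,\zeta_2,\zeta_3)$ with $|\zeta_k|=1$ and absorbing the central factor so that $\zeta_3=1$, the conjugate $A'=B^{-1}AB$ has entries $A'_{ij}=(\zeta_j/\zeta_i)A_{ij}$, which for appropriate $\theta_1,\theta_2\in\RR$ matches precisely the form \eqref{Torbit}. I do not foresee serious obstacles: the only potential subtlety is that we never need to track the $\la$-eigenspace at all, since the $0$- and $1$-eigenspaces already determine the flag $\CC e_1\subset\langle e_1,e_2\rangle$ that pins down $B$.
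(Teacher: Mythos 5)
Your proof is correct, and it takes a genuinely different route from the paper's. The paper writes out the relation $XA=A'X$ as an explicit system of nine scalar equations and reads off $x_{31}=x_{21}=x_{32}=0$ by direct inspection (using $\la'\ne 0,1$), then notes that an upper-triangular unitary matrix is diagonal. You instead invoke the structural fact that a conjugating matrix must carry eigenspaces to eigenspaces: since the $0$-eigenspace of both $A$ and $A'$ is $\CC e_1$ and the $1$-eigenspaces both lie in $\langle e_1,e_2\rangle$, the conjugator preserves the full flag $\CC e_1\subset\langle e_1,e_2\rangle\subset\CC^3$, hence is upper triangular, hence diagonal once unitarity is imposed. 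The paper's computation buys a by-product — the explicit relations $|a'|=|a|$, $|b'|=|b|$, $|c'|=|c|$ and $ab'c=a'bc'$ — whereas your argument is shorter, more conceptual, and makes the role of the hypothesis $\la\notin\{0,1\}$ (distinct eigenvalues, one-dimensional eigenspaces) transparent. Both approaches ultimately rest on the same lemma, that the conjugating unitary is diagonal; you just prove it by eigenspace geometry instead of by equations. One small presentational note: you should state explicitly that the standing assumption $\la,\la'\in\CC\setminus\{0,1\}$ is what guarantees the three eigenvalues are distinct and the eigenspaces one-dimensional, since this is the hypothesis that makes the flag-preservation argument work.
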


\begin{proof}
Let us first assume that the matrices $A$ and $A'$ are unitarily equivalent, so
that there exists $X=(x_{ij})$ in $U(3)$ such that $XA=A'X$. This equation
translates into the system
\[\begin{cases}
0=x_{21}a'+x_{31}b'\\
0=x_{21}+x_{31}c'\\
0=x_{31}\la'\\
x_{11}a+x_{12}=x_{22}a'+x_{32}b'\\
x_{21}a+x_{22}=x_{22}+x_{32}c'\\
x_{31}a+x_{32}=x_{32}\la'\\
x_{11}b+x_{12}c+x_{13}\la=x_{23}a'+x_{33}b'\\
x_{21}b+x_{22}c+x_{23}\la=x_{23}+x_{33}c'\\
x_{31}b+x_{32}c+x_{33}\la=x_{33}\la',
\end{cases}\]
and immediately gives $x_{31}=x_{21}=x_{32}=0$. But as $X$ is unitary and now
upper triangular, it must be diagonal. Thus, $x_{13}=x_{12}=x_{23}=0$ and
$|x_{11}|=|x_{22}|=|x_{22}|=1$. Substituting above, we get
\[\begin{cases}
x_{11}a=x_{22}a'\\
x_{11}b=x_{33}b'\\
x_{22}c=x_{33}c'\\
x_{33}\la=x_{33}\la',
\end{cases}\]
from which we get
\begin{equation}{abc}
  \la=\la';\quad |a'|=|a|,\ |b'|=|b|,\ |c'|=|c|;\quad ab'c=a'bc'.
\end{equation}
This implies that $a'=e^{i\vartheta_1}a, c'=e^{i\vartheta_2}c$ and
$b'=e^{i(\vartheta_1+\vartheta_2)}b$, and leads to case (i). Conversely, we have already seen that any
matrix \eqref{Torbit} is unitarily equivalent to $A$.
\end{proof}

\begin{remark}
Recall that the surface   of bidegree $(1,1)$ $S_A$ defined by a matrix $A$ of the form
\eqref{forma-canonica} is invariant if we modify $A$ by adding multiples of the identity and rescaling. 
But in
order to return to the canonical form we would need to apply an element of
the Weyl group of $\SU(3)$, which acts by permuting the standard coordinates of
$\CC^3$. Or, we could reverse the order and start by transforming $A$ into one
of
\[
\left(\begin{array}{ccc}
  0&b&a\\
  0&\la&0\\
  0&c&1\\
\end{array}\right),\quad
\left(\begin{array}{ccc}
  \la&0&0\\
  c&1&0\\
  b&a&0\\
\end{array}\right),\quad
\left(\begin{array}{ccc}
  1&0&c\\
  a&0&b\\
  0&0&\la\\
\end{array}\right),\quad
\left(\begin{array}{ccc}
  \la&0&0\\
  b&0&a\\
  c&0&1\\
\end{array}\right),\quad
\left(\begin{array}{ccc}
  1&c&0\\
  0&\la&0\\
  a&b&0\\
\end{array}\right).\]
The first three matrices are the result of applying a 2-cycle, and the last two
of applying a 3-cycle. Then in special cases, if one or more of $a,b,c$ vanish in $A$, the new matrices can be put in to canonical forms as follows:
\[
\left(\begin{array}{ccc}
  0&\frac b\la&\frac a\la\\
  0&1&0\\
  0&0&\frac1\la\\
\end{array}\right),\
\left(\begin{array}{ccc}
  0&0&0\\
  0&1&0\\
  0&0&\frac\la{\la-1}\\
\end{array}\right),\
\left(\begin{array}{ccc}
  0&0&-c\\
  0&1&-b\\
  0&0&1\!-\!\la\\
\end{array}\right),\
\left(\begin{array}{ccc}
  0&0&0\\
  0&1&-\frac a\la\\
  0&0&\frac{\la-1}\la\\
\end{array}\right),\
\left(\begin{array}{ccc}
  0&\frac c{\la-1}&0\\
  0&1&0\\
  0&0&\frac1{1-\la}\\
\end{array}\right).\]

Then if we denote by $S{(\la,a,b,c)}$ the surface defined by a matrix $A$ of the form \eqref{forma-canonica}, we deduce that
$S{(\la,0,0,0)}$
is unitarily equivalent to $S{(\la',0,0,0)}$ for any $\la'\in\Lambda$ defined in Remark \ref{Xratio}.
Moreover, we have the following relations of unitarily equivalence:
$$S{(\lambda,a,b,0)}\sim S{\left(\frac1\la,\frac{b}\la,\frac{a}\la,0\right)}\qquad
S{ (\lambda,0,b,c)}\sim S{(1-\la,0,-c,-b)},$$
$$S{(\lambda,a,0,0)}\sim S{\left(\frac{\la-1}\la,0,0,-\frac{a}\la\right)}, 
\quad \quad S{(\lambda,0,0,c)}\sim S{ \left(\frac1{1-\la},\frac{c}{1-\la},0,0\right)}.$$

Notice that, even in the general case, when  $abc\neq0$, it is possible to find different canonical forms. For example consider the following example:
Let $A$ of the form \eqref{forma-canonica}, with $a\in\RR$
 and take
$$
X=\left(\begin{array}{ccc}
 -\frac{a}{\sqrt{a^2+1}}&-\frac{1}{\sqrt{a^2+1}}&0\\
 \frac{1}{\sqrt{a^2+1}}&-\frac{a}{\sqrt{a^2+1}}&0\\
  0&0&-1
\end{array}\right).
$$
It is easy to check that $X$ is unitary, and to compute
that 
$$
A'=XAX^T=\left(\begin{array}{ccc}
  1&a&\frac{(ba+c)}{\sqrt{a^2+1}}\\
  0&0&\frac{(ac-b)}{\sqrt{a^2+1}}\\
  0&0&\la
\end{array}\right),
$$
hence, for $a\in\RR$, we obtain that 
\[ S{(\lambda,a,b,c)}\sim
S{\left(1-\la,-a,-\frac{(ba+c)}{\sqrt{a^2+1}},-\frac{(ac-b)}{\sqrt{a^2+1}}\right)}.\]
\end{remark}

\begin{corollary}
Let $S_A$ be a smooth $(1,1)$-surface where $A$ is as in
Formula~\eqref{forma-canonica}, with $[a:b:c]\in\CP$. Then its
stabilizer is trivial.
\end{corollary}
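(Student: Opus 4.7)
The plan is to leverage Proposition \ref{classofA} directly. A unitary automorphism of $\FF$ corresponds (by the discussion in Subsection \ref{Aut}) to a class $[B]\in\PU(3)$, acting via $(p,\ell)\mapsto(pB^{-1},B\ell)$, and hence sending the surface $S_A$ to $S_{BAB^{-1}}$. Thus $[B]$ stabilizes $S_A$ if and only if $BAB^{-1}$ and $A$ define the same surface of bidegree $(1,1)$, which by the construction $S_A=\{pA\ell=0\}$ is equivalent to the equation $BAB^{-1}=sA+tI$ for some $s\in\CC^*$ and $t\in\CC$.

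The first step is to pin down $s$ and $t$ by comparing spectra. The conjugate $BAB^{-1}$ inherits the eigenvalues $\{0,1,\la\}$ of $A$, whereas $sA+tI$ has eigenvalues $\{t,\,s+t,\,s\la+t\}$. Enumerating the six ways of matching these multisets, one finds that for $\la$ outside a small exceptional set (such as $\la\in\{1/2,\,2,\,-1,\,e^{\pm i\pi/3}\}$, which encodes the non-identity permutations capable of fixing $\{0,1,\la\}$), only $s=1,\,t=0$ is admissible. In these generic cases one concludes $BAB^{-1}=A$.

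The second step is to apply Proposition \ref{classofA} with $A'=A$. Inspection of its proof shows that if $X\in U(3)$ intertwines two matrices in the canonical form \eqref{forma-canonica}, then $X$ must be diagonal with unit-modulus diagonal entries, of the type $\mathrm{diag}(e^{i\vartheta_1+i\vartheta_2},\,e^{i\vartheta_2},\,1)$ modulo the center. Imposing $BA=AB$ on such a $B$ yields exactly
\begin{equation*}
a\,e^{i\vartheta_1}=a,\qquad b\,e^{i(\vartheta_1+\vartheta_2)}=b,\qquad c\,e^{i\vartheta_2}=c.
\end{equation*}

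For a generic point $[a:b:c]\in\CP$ (concretely, one with $abc\ne 0$), these three relations force $e^{i\vartheta_1}=e^{i\vartheta_2}=1$, so $B$ is the identity in $\PU(3)$, proving triviality of the stabilizer. The main obstacle is the handling of the finitely many exceptional cases: the special values of $\la$ listed above, where additional rescalings could reshuffle the eigenvalues, and the three torus-fixed loci $[1:0:0],\,[0:1:0],\,[0:0:1]\in\CP$, at which the induced $T^2$-action on $(a,b,c)$ degenerates and a one-parameter subgroup would survive in the stabilizer; these have to be excluded or dealt with in a separate case analysis.
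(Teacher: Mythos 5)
Your proof follows the same strategy as the paper --- reduce to the equations extracted in the proof of Proposition~\ref{classofA} with $A'=A$ --- but adds two layers of care that the paper's one-sentence proof omits. First, you correctly note that the stabilizer condition is $BAB^{-1}=sA+tI$ rather than merely $BAB^{-1}=A$; your eigenvalue-matching argument to pin down $(s,t)=(1,0)$ for generic $\lambda$ is a genuine refinement, and the exceptional set $\lambda\in\{-1,\tfrac12,2,e^{\pm i\pi/3}\}$ you identify would have to be handled separately (the paper silently assumes this reduction, which is consistent with the convention of its Subsection \ref{U3} but is never justified).

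Second, your observation about the torus-fixed loci is correct and in fact exposes an imprecision in the statement. When exactly one of $a,b,c$ is nonzero --- say $a\neq 0$, $b=c=0$ --- the three relations from Proposition~\ref{classofA} reduce to $e^{i\vartheta_1}=1$ alone, so that $B=\mathrm{diag}(e^{i\vartheta_2},e^{i\vartheta_2},1)$ commutes with $A$ for every $\vartheta_2$ and gives a nontrivial circle in the stabilizer of $S_A$ inside $\PU(3)$; the same happens for $[0:1:0]$ and $[0:0:1]$. Thus the corollary really requires at least two of $a,b,c$ to be nonzero. Your sufficient condition $abc\neq 0$ works, though it is slightly stronger than necessary: any two of the constraints $a(e^{i\vartheta_1}-1)=0$, $b(e^{i(\vartheta_1+\vartheta_2)}-1)=0$, $c(e^{i\vartheta_2}-1)=0$ already force $e^{i\vartheta_1}=e^{i\vartheta_2}=1$.
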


\begin{proof}
  The proof is a direct consequence of the computations given to prove
  Proposition~\ref{classofA} applied to $A$ and $A'=A$.
\end{proof}

\section{Twistor theory}\label{Twy}

We now study the flag manifold as the twistor space of $\CP$, in the context of
4-dimensional Riemannian geometry. Following~\cite[Example 3, p.~438]{ahs},
\cite[\S4.3, p.~500]{gauduchon}, \cite[Remark p.147]{hitchin} and \cite[Ex
  p.~0]{lebrun2}, we construct the twistor projection explicitly.

Consider the Hermitian product
\[\langle p,q\rangle = pq^* = p_0\bar q_0+p_1\bar q_1+p_2\bar q_2,\]
where $p=(p_0,p_1,p_2)$ and $q=(q_0,q_1,q_2)$ are row vectors, and
$q^*=\bar q^\vee$. This pairing induces the anti-linear bijection
\begin{equation}\label{bij}
  \CC^3\to(\CC^3)^\vee,\quad q\mapsto q^*.
\end{equation}
We shall extend the asterisk notation to projective classes by writing
\[ p=[p_0:p_1:p_2]\in\CP \quad\Rightarrow\quad
p^*=[\bar p_0:\bar p_1:\bar p_2]^\vee\in \CPv,\]
and $p^*$ can be thought of as the line at infinity $\ell_p^\8$. We can
similarly convert a line into a point:
\[ \ell=[\ell_0:\ell_1:\ell_2]^\vee\in\CPv \quad\Rightarrow\quad
\ell^*=[\bar\ell_0:\bar\ell_1:\bar\ell_2]\in\CP.\]

The twistor projection $\pi:\FF\to\CP$ is defined by
\begin{equation}\label{pi}
  \pi(p,\ell) = p^*\times\ell.
  \end{equation}
Recall that this cross product is the point of intersection of the two
lines, i.e.\ $p^*\cap\ell=\{p^*\times\ell\}$. The equation
\[\pi([p_0:p_1:p_2],[\ell_0:\ell_1:\ell_2]^\vee) = 
[\bar{p_1}\ell_2-\bar{p_2}\ell_1:\bar{p_2}\ell_0-\bar{p_0}\ell_2:
       \bar{p_0}\ell_1-\bar{p_1}\ell_0]\]
     gives a coordinate representation of $\pi$.

The introduction of the bijection \eqref{bij} allows us to regard the
projections $\pi_1,\pi_2,\pi$ on the same footing. If we set
$\pi_2^*(p,\ell)=\ell^*$, then $\pi_1,\pi,\pi_2^*$ are all maps $\FF\to\CP$.
A point $(p,\ell)\in\FF$ defines a unitary splitting
\begin{equation}\label{osum}
  \CC^3 =
\langle p\rangle\oplus\langle p^*\times\ell\rangle\oplus\langle\ell^*\rangle,
\end{equation}
and our projections now correspond to the three components. Note that
$\pi_2^*$ is now anti-holomorphic, while the twistor projection $\pi$
is neither holomorphic nor anti-holomorphic.

\[
\begindc{\commdiag}[3]
\obj(0,0)[A]{$\CP$}
\obj(260,0)[Aa]{$\CP$}
\obj(130,130)[B]{$\FF$}
\obj(130,260)[C]{$\CP$}
\mor{B}{A}{$\pi_1$}[\atright,\solidarrow]
\mor{B}{Aa}{$\pi_2^*$}[\atleft,\solidarrow]
\mor{B}{C}{$\pi$}[\atleft,\solidarrow]
\enddc\]\vs

\subsection{Almost complex structures}\label{acs}
Fix a point $x=(p,\ell)$ in $\FF$, and set $q=\pi(x)=p^*\times\ell$. Referring
to \eqref{osum}, we have an identification
\begin{equation}\label{TxFF}
(T_x\FF,\JJ) \cong
  (\langle p^*\rangle\otimes\langle\ell^*\rangle)\ \oplus\
  (\langle p^*\rangle\otimes\langle q\rangle)\ \oplus\
  (\langle q^*\rangle\otimes\langle\ell^*\rangle).
\end{equation}
of the holomorphic tangent space to $\FF$ for the complex structure $\JJ$ that
arises from $\CPCPv$, which we have been considering. These choices ensure that
the projections $\pi_1,\pi,\pi_2$ behave appropriately with respect to $\JJ$,
in particular that $\pi_1$ is holomorphic. It is well known that
$\pi_1\colon\FF\to\CP$ can be identified with the projectivized tangent bundle
$\PP(T\CP)$. The same is true of the non-holomorphic fibration
$\pi\colon\FF\to\CP$ except that in this case the realization of $\JJ$ is more
complicated; this is the heart of Penrose's twistor space concept
\cite{ahs,OR}.

The analogue of \eqref{TxFF} for the standard complex structure $J$ on the
projective plane is
\begin{equation}\label{TxCP}
(T_q\CP,J) \cong
  (\langle q^*\rangle\otimes\langle\ell^*\rangle)\ \oplus\
  (\langle q^*\rangle\otimes\langle p\rangle)
  = L\oplus L^\top.
\end{equation}
Geometrically, $L$ is tangent to the line $\ell$ and $L^\top$ is tangent to the
line $p^*$ (both lines pass through $q$). We can now see that \eqref{TxFF} is
`constructed' by combining a vertical component (the first summand) with a
horizontal component whose almost complex structure $J'$ is a twisted version
of \eqref{TxCP} defined by setting
\[ J' = \left\{\begin{array}{ll}
J & \hbox{ on }L\\
-J & \hbox{ on }L^\top,\end{array}\right.\]
In this way, each fiber $\pi^{-1}(q)\simeq U(2)/T^2$ parametrizes almost
complex structures on $T_q\CP$ that are orthogonal relative to the
Fubini-Study metric:
\begin{equation}\label{gFS}
  \gFS(J'X,J'Y)=\gFS(X,Y).
\end{equation}
In the context of this paper, we record

\begin{definition}\label{OCS}
An \emph{orthogonal complex structure} is a
complex structure defined on an open subset of $\CP$ satisfying \eqref{gFS} at
each point.
\end{definition}

\subsection{Symmetries}
One can relate the fibers of $\pi_1,\pi,\pi_2^*$ by exploiting the obvious
3-fold symmetry inherent in the unitary description of $\FF$.

\begin{definition}\label{jjj} Let $j_1,j,j_2$ be the diffeomorphims of
    $\FF$ defined by
\[\begin{array}{rcl}
j_1(p,\ell) &=& (p,p\times\ell^*)\\
j(p,\ell) &=& (\ell^*,p^*)\\
j_2(p,\ell) &=& (p^*\times\ell,\ell).
\end{array}\]
\end{definition}

Each of these transformations are involutions. For example,
\[  (j_1)^2(p,\ell)=(p,p\times(p\times\ell^*)^*)=(p,p\times(p^*\times\ell)),\]
but the projective class $p\times(p^*\times\ell)$ equals $\ell$ by the
well-known vector identity, given that the scalar product $p\ell$
vanishes. Each involution $j_1,j,j_2$ permutes the projections
$\pi_1,\pi,\pi_2^*$. In particular,
\begin{equation}\label{12j}
  \pi = \pi_1\circ j_2 = \pi_2^*\circ j_1.
\end{equation}  
The involutions therefore generate the symmetry group $S_3$ that permutes the
projections $\pi_1,\pi,\pi_2^*$. Notice that (only) $j$ is anti-holomorphic.

\begin{remark}\label{JJ}
The group $S_3$ we have just defined is a representation of the Weyl group $W$
of $SU(3)$, mentioned in Subsection \ref{Aut}. To understand this, regard a
point of $\FF$ as a right coset $g\TT$, where $\TT=T^2$ is the isotropy
subgroup of $SU(3)$ fixing a point (the `origin') of $\FF$. If $w\in N(\TT)$
then we can define an action
\[ w\cdot(g\TT) = (g\TT)w^{-1} = (gw)\TT.\]
Suppose that $w\not\in \TT$ (so that $w$ is not the identity in $W$). Then
$w$ does not act on $\FF$ as a holomorphic isometry (that would be
the action $g\TT\mapsto wg\TT$), and fixes no point $g\TT\in\FF$.

The six complex structures induced on $\FF$ by $S_3$ are precisely the $\SU(3)$
invariant complex structures considered in \cite{BS}. Each can be defined in
terms of decomposition analogous to \eqref{TxFF}, which was used to specify the
standard one $\JJ$. These six structures are supplemented by two non-integrable
almost complex structures $\pm J'$, which render the three fibrations
$\pi_1,\pi,\pi_2$ equivalent. Moreover, even permutations in $S_3$ act
$J'$-holomorphically on $\FF$, and it is this fact that allows one to generate
harmonic maps of Riemann surfaces into $\CP$ from holomorphic ones \cite{es}.
\end{remark}  

Any element of the group $S_3$ generated by Definition \ref{jjj} commutes with
the action of the group of unitary automorphisms discussed in Subsection
\ref{Aut}. For example,
\[B\cdot j(p,\ell) = (\ell^*B,Bp^*) = ((B\ell)^*, (pB^{-1})^*)
  = j(B\cdot (p,\ell)),
  \]
for $B\in\SU(3)$. Moreover, these equations yield

\begin{lemma}
A projective automorphism of $\FF$ induced by $B\in\SL(3,\CC)$ is unitary if
and only if $B\circ j = j\circ B$.
\end{lemma}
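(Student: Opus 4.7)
The plan is to prove both implications by direct computation, using the action formula \eqref{PGL} together with Definition \ref{jjj}, and being careful about what equality means at the projective level.

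First I would compute the two compositions explicitly. On one hand,
\[
(B\circ j)(p,\ell)=B\cdot(\ell^*,p^*)=(\ell^*B^{-1},\>Bp^*),
\]
while on the other,
\[
(j\circ B)(p,\ell)=j(pB^{-1},B\ell)=\big((B\ell)^*,\>(pB^{-1})^*\big)=(\ell^*B^*,\>(B^*)^{-1}p^*),
\]
using the anti-involutive rule $(AC)^*=C^*A^*$ on row/column vectors together with $(B^{-1})^*=(B^*)^{-1}$.

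For the \emph{if} direction, the two expressions above must agree in $\CPCPv$ for every $(p,\ell)\in\FF$. As $\ell$ varies over $\CPv$ the point $\ell^*$ sweeps out all of $\CP$, and similarly for $p^*$, so the projective equalities
\[
[\ell^*B^{-1}]=[\ell^*B^*]\quad\text{and}\quad [Bp^*]=[(B^*)^{-1}p^*]
\]
force $B^{-1}=\lambda B^*$ for some nonzero scalar $\lambda$. This rewrites as $B^*B=\lambda^{-1}I$. Taking determinants gives $|\det B|^2=\lambda^{-3}$, so $\lambda$ is real and positive; writing $\lambda=\mu^{-2}$ with $\mu>0$, we see that $B/\mu\in U(3)$. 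Hence the class of $B$ in $\PGL(3,\CC)$ lies in the projective unitary subgroup, i.e. the induced automorphism of $\FF$ is unitary.

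For the \emph{only if} direction, assume $B$ is unitary, which (after adjusting by a nonzero scalar that acts trivially on $\FF$) we may take to mean $B\in U(3)$. Then $B^*=B^{-1}$ and $(B^*)^{-1}=B$, so the formulas for $B\circ j$ and $j\circ B$ coincide componentwise; this is precisely the computation displayed just before the statement of the lemma, now justified by the unitarity identity. Combining the two directions yields the equivalence. There is no real obstacle here beyond keeping straight the distinction between projective and vector-level equalities and the convention $(AC)^*=C^*A^*$ for the row/column pairing; once those are in place the proof is purely mechanical.
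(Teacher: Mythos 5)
Your proof is essentially correct and follows the same direct-computation route as the paper, whose ``proof'' consists only of the display
\[
B\cdot j(p,\ell)=(\ell^*B^{-1},Bp^*)=((B\ell)^*,(pB^{-1})^*)=j(B\cdot(p,\ell))
\]
for $B\in\SU(3)$ (the paper's ``$\ell^*B$'' is a typo for $\ell^*B^{-1}$), and leaves the converse implicit. You spell out both directions, which is an improvement.

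There is, however, one step that does not hold up as written. In the ``if'' direction you deduce $B^{-1}=\lambda B^*$, i.e.\ $B^*B=\lambda^{-1}I$, and then say ``taking determinants gives $|\det B|^2=\lambda^{-3}$, \emph{so} $\lambda$ is real and positive.'' The determinant equation only tells you that $\lambda^{-3}$ is real and positive, so $\lambda$ could a priori be $|\lambda|\,e^{\pm 2\pi i/3}$. The correct justification is different: $B^*B$ is always Hermitian and, for invertible $B$, positive definite; equating it with $\lambda^{-1}I$ therefore forces $\lambda^{-1}$ to be a positive real number. Moreover, since the lemma assumes $B\in\SL(3,\CC)$, you have $\det B=1$, hence $\lambda^{-3}=1$ with $\lambda>0$ gives $\lambda=1$ outright, so $B^*B=I$ and $B\in\SU(3)$ — no rescaling by $\mu$ is needed. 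You might also spell out the elementary fact you use tacitly, namely that if $vB^{-1}$ and $vB^*$ are proportional for \emph{every} row vector $v$ then the scalar of proportionality is independent of $v$ (because a matrix for which every vector is an eigenvector is scalar); this is correct but worth a word.

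Everything else — the action formula \eqref{PGL}, the identities $(B\ell)^*=\ell^*B^*$ and $(pB^{-1})^*=(B^*)^{-1}p^*$, and the verification that $\ell^*$ (resp.\ $p^*$) sweeps out all of $\CP$ as $(p,\ell)$ ranges over $\FF$ — is handled correctly.
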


A unitary automorphism of $\FF$ commutes with each of the three projections
$\pi_1,\pi,\pi_2$, and is completly determined by its action on the base $\CP$
of any one of these projections. The relevance of an anti-holomorphic
involution in the definition of twistor spaces is discussed in
\cite{ahs,gauduchon,hitchin,lebrun,lebrun2}.

\begin{remark}\label{secant}
Reducing the structure group $\SU(3)$ further to the orthogonal group
$SO(3)$, one can define complex conjugation as an anti-holomorphic
involution $(p,\ell)\mapsto(\bar p,\bar\ell)$. Composing this involution
with $j$ gives rise to the holomorphic involution \eqref{kappa} of
$\FF$, and the latter covers complex conjugation $\si$ in $\CP$
relative to the twistor fibration $\pi$ since
\[\pi(\ell^\vee,p^\vee)=(\ell^\vee)^*\times p^\vee=\bar{\ell\times p^*}.\]
It is well known that the orbifold $\CP/\!\left<\si\right>$ is
homeomorphic to $S^4$, thought of as the sphere inside the real
irreducible 5-dimensional representation of $SO(3)$. Mapping
$(p,\ell)$ to the symmetric traceless $3\times3$ matrix $\ell p+(\ell
p)^\vee$ realizes the quotient $\FF/\!\left<\kappa\right>$ as the
secant variety of a rational normal curve in $\PP^4$ (defined by
setting $p^\vee=\ell$). This quotient corresponds to the twistor space
of $S^4$ endowed with one of a series of $SO(3)$-invariant
orbifold self-dual Einstein metrics \cite{hitchin4}.
\end{remark}

\subsection{Twistor fibers}
This subsection analyses more carefully the fibers of $\pi$.

\begin{definition}
Given $q\in\CP$ we call $\pi^{-1}(q)$ a \textit{twistor fiber} or
\textit{twistor line}.
\end{definition}

Given $q\in\CP$, we have
\[\pi^{-1}(q) = \{(p,\ell)\in\FF \mid p^*\times\ell=q\} =
\{(p,\ell)\in \FF \mid q\ell=0,\ pq^*=0\}.\]
From the general theory in the citations above, we know that the fibers of
$\pi$ are rational $j$-invariant curves with normal bundle
$\Oo(1)\oplus\Oo(1)$. Lemma~\ref{tutte11} and~\ref{f1} tell us each curve
$L_{q,m}$ has the same properties once we impose $j$-invariance. Indeed,
Definition \ref{def rette} implies that
\[ \pi^{-1}(q) = L_{q,q^*}.\]
In particular, any twistor line also has bidegree $(1,1)$.

The family of twistor fibers is obtained as the set of fixed points of the
anti-holomorphic involution $j$ acting on the space of parameters of the closure
of $(1,1)$-curves. Whence,

\begin{lemma}
The set of twistor fibers is a Zariski dense subset of $\Vv$.
\end{lemma}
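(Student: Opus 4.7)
The plan is to realize the set of twistor fibers inside $\Vv$ as the fixed-point locus of a natural anti-holomorphic involution on the parameter space $(\CPCPv)\sm\FF$, and then invoke the identity principle for totally real submanifolds of top real dimension to conclude Zariski density.

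First, by the formula $\pi^{-1}(q)=L_{q,q^*}$ just established, the twistor subfamily corresponds to
\[ R:=\{(q,q^*)\in\CPCPv \mid q\in\CP\}\subset(\CPCPv)\sm\FF,\]
the inclusion being immediate from $qq^*=\|q\|^2\ne 0$. It therefore suffices to prove that $R$ is Zariski dense in the smooth, irreducible, complex $4$-dimensional quasi-projective variety $(\CPCPv)\sm\FF$, which parametrizes $\Vv$. Introduce the map $\sigma\colon\CPCPv\to\CPCPv$, $\sigma(q,m)=(m^*,q^*)$. Since $(m^*)^*=m$ and the assignment $\cdot^*$ is anti-linear, $\sigma$ is an anti-holomorphic involution. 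Because $(m^*)(q^*)=\overline{qm}$, the involution $\sigma$ preserves $\FF$, hence restricts to $(\CPCPv)\sm\FF$. A direct calculation shows that its fixed points (with representatives normalized so that $m=q^*$ rather than a unimodular multiple) are exactly the pairs in $R$.

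It is a standard fact that the fixed-point set of an anti-holomorphic involution of a complex manifold, when non-empty, is a totally real real-analytic submanifold whose real dimension equals the complex dimension of the ambient manifold; thus $\dim_\RR R=4=\dim_\CC((\CPCPv)\sm\FF)$. Near any $x_0\in R$ one can choose local holomorphic coordinates in which $\sigma$ becomes standard complex conjugation, so that $R$ corresponds locally to $\RR^4\subset\CC^4$. Any holomorphic function vanishing on $\RR^4$ vanishes identically, as its Taylor coefficients at the origin are all real-valued and zero on a totally real slice of maximal dimension. Consequently, any regular function on a Zariski-open subset of $(\CPCPv)\sm\FF$ that vanishes on $R$ vanishes on a non-empty open set; by irreducibility it vanishes identically. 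Equivalently, any closed algebraic subvariety of $(\CPCPv)\sm\FF$ containing $R$ is the whole space, which proves the lemma.

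The only non-routine step is the local linearization of $\sigma$ to complex conjugation. This is handled by observing that at a fixed point $x_0$ the differential $d\sigma_{x_0}$ is a conjugate-linear involution of $T_{x_0}(\CPCPv)$, hence splits the tangent space as a real direct sum of its $\pm 1$ eigenspaces; a complex basis of the $+1$ eigenspace, transported by the exponential map (or by the implicit function theorem applied to the fixed-point equation), yields holomorphic coordinates in which $\sigma$ is standard conjugation. Everything else is a direct consequence of the explicit parametrization $q\mapsto L_{q,q^*}$ and the Hermitian structure already set up in this section.
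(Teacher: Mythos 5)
Your proof is correct and takes essentially the same approach as the paper: the paper simply notes that the twistor fibers form the fixed-point locus of the anti-holomorphic involution induced by $j$ on the parameter space and writes ``whence'' the lemma follows, leaving the totally-real-slice / identity-principle argument implicit. You have spelled out those details carefully and correctly, including the point that the fixed-point set $R=\{(q,q^*)\}$ lies in $(\CPCPv)\sm\FF$ and the local linearization of the involution.
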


Notice the analogy between the previous lemma and~\cite[Lemma 3.2]{altavillaballico1}.

\begin{remark}\label{remarkintersectionj}
Since $j(L_{q,m})=L_{m^*,q^*}$, the curve $L_{q,m}$ is $j$-invariant
if and only if it is a twistor fiber. The latter define a real
(i.e.\ $j$-invariant) slice of $\Vv$, see Definition~\ref{def
  rette}. Conversely, $\bar\Vv\cong\CP\times\CP$ can be thought
of as the compactified complexified space of twistor fibers, analogous
to the role that the Klein quadric $\Gr_2(\CC^4)$ plays in Penrose's
twistor theory for $\PP^3$.
\end{remark}

Given the vector identity
\[ (m\times q^*)(q\times m^*) = -|m\times q^*|^2\]
(before we projectivize), Lemma~\ref{lemmaintersection11} implies that
$L_{q,m}\cap j(L_{q,m}) = L_{q,m}\cap L_{m^*,q^*}$ is empty if and only if
$q,m^*$ are distinct points of $\CP$, i.e.\ $L_{q,m}$ is not a twistor fiber.
On the other hand, provided $pq^*\ne0$,
\[ \pi_1^{-1}(p)\cap j(\pi_1^{-1}(q)) =
\pi_1^{-1}(p)\cap\pi_2^{-1}(q^*) = \es,\]
so in this case $j$ generates a skew pair of fibers.

Thanks to Remark~\ref{intersectwithfbire}, we are able to compute the twistor
projections $\pi(\pi_i^{-1}(q))$ of curves of bidegree $(0,1)$ ($i=1$) and
$(1,0)$ ($i=2$). For any $q\in\CP$, we have
$$
\pi(\pi_1^{-1}(q))=\{z\in\CP\mid \pi^{-1}(z)\cap\pi_1^{-1}(q)\ne\es\},
$$ but $\pi^{-1}(z)=L_{z,z^*}$, hence
$\pi^{-1}(z)\cap\pi_1^{-1}(q)\neq\es$ if and only if $qz^*=0$.
Collecting everything:
$$
\pi(\pi_1^{-1}(q))=\{z\in\CP\mid zq^*=0\},
$$
Analogously, 
$$
\pi(\pi_2^{-1}(m))=\{z\in\CP\mid zm=0\}.
$$
Each image is a projective line. The same conclusion can be reached by
means of \eqref{12j}.

\section{Curves and surfaces revisited}\label{Twn}

We begin this section by revisiting the description of surfaces of bidegree
$(1,0)$ and $(0,1)$. Any two surfaces of type $(0,1)$ (respectively, $(1,0)$)
are equivalent under unitary transformation. Using the projections
$\pi_1,\pi_2$, this is a consequence of the fact $\SU(3)$ acts transitively on
both $\CP$ and $\CPv$.

\begin{lemma}
  $\qH$ is invariant by $j_2$, and $\Hm$ is invariant by $j_1$.
\end{lemma}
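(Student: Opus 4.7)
The plan is to verify each invariance directly from the coordinate description, using only the definitions of $\qH$, $\Hm$ and the involutions $j_1, j_2$ from Definition \ref{jjj}, together with the basic fact that $p^*\times\ell$ denotes the intersection point $p^*\cap\ell$ in $\CP$ and $p\times\ell^*$ denotes the line joining $p$ and $\ell^*$ in $\CPv$.

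For the first claim, let $(p,\ell)\in\qH$, so $p\ell=0$ and $q\ell=0$. The map $j_2$ sends $(p,\ell)$ to $(p^*\times\ell,\ell)$, which preserves the second coordinate. In particular the condition $q\ell=0$ defining $\qH$ still holds, so the only thing to check is that the new pair actually lies in $\FF$. But $p^*\times\ell$ is by construction the point of intersection of the lines $p^*$ and $\ell$ in $\CP$, so it lies on $\ell$, i.e.\ $(p^*\times\ell)\,\ell=0$. Hence $j_2(p,\ell)\in\qH$.

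For the second claim, let $(p,\ell)\in\Hm$, so $p\ell=0$ and $pm=0$. Now $j_1(p,\ell)=(p,\,p\times\ell^*)$ preserves the first coordinate, so $pm=0$ still holds; and $p\times\ell^*$ is the line through $p$ and $\ell^*$ in $\CPv$, so $p$ lies on this line, giving $p(p\times\ell^*)=0$. Thus $j_1(p,\ell)\in\FF\cap\Hm$.

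There is no real obstacle: the whole argument is a one-line unpacking of the cross-product interpretation, and in fact mirrors the computation used earlier to show $(j_1)^2=\op{id}$. The only thing that could catch one out is remembering that $j_2$ acts on the first coordinate while preserving the defining datum of $\qH$ (namely $\ell$), and $j_1$ acts on the second coordinate while preserving the defining datum of $\Hm$ (namely $p$); once this matching is noticed, the invariance is immediate.
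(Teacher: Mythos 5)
Your argument is correct and essentially the same as the paper's: both rest on the observation that $j_2$ preserves the second coordinate $\ell$, which alone determines membership in $\qH$ (and symmetrically for $j_1$, $p$, and $\Hm$). The paper phrases this slightly more abstractly via $\pi_2^*$ and omits the explicit check that the image lies in $\FF$ (which is automatic since $j_2$ is a self-map of $\FF$), but the idea is identical.
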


\begin{proof}
  This follows immediately from Definitions \ref{qHm} and \ref{jjj}. The
  condition that a point of $\FF$ lies in $\qH$ depends only on its image under
  $\pi_2^*$. But this is unchanged by $j_2$. Similarly for the triple $\Hm,
  \pi_1,j_1$.
\end{proof}

Recall Corollary \ref{fritz}: The restriction of $\pi_1$ realizes
$\qH$ as the blow-up of $\CP$ at $q$. The same is true of $\pi$, since
\[ \pi^{-1}(p)= (\pi_1\circ j_2)^{-1}(p) = j_2(\pi_1^{-1}(p)),\]
which is a single point unless $p=q$. A similar argument applies to $\Hm$:

\begin{corollary}\label{piH}
  The twistor projection $\pi$ realizes $\qH$ as the blow-up of $\CP$ at $q$,
  and $\Hm$ as the blow up of $\CP$ at $m^*$.
\end{corollary}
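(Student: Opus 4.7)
My plan is to deduce both claims from Corollary~\ref{fritz} by combining the symmetry decompositions $\pi=\pi_1\circ j_2=\pi_2^*\circ j_1$ from \eqref{12j} with the invariances $j_2(\qH)=\qH$ and $j_1(\Hm)=\Hm$ established in the preceding lemma. The idea is that each $j_i$ intertwines two of the projections, so once we know the statement for one projection, we immediately get it (up to the diffeomorphism $j_i$) for another.

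For the first claim, since $j_2$ is an involution of $\FF$ that restricts to a self-diffeomorphism of $\qH$, we can rewrite the fibers of $\pi|_{\qH}$ as
\[
\pi|_{\qH}^{-1}(z)\;=\;j_2^{-1}(\pi_1^{-1}(z))\cap\qH\;=\;j_2\bigl(\pi_1^{-1}(z)\cap\qH\bigr)\;=\;j_2\bigl(\pi_1|_{\qH}^{-1}(z)\bigr).
\]
By Corollary~\ref{fritz}, $\pi_1|_{\qH}$ is the blow-up of $\CP$ at $q$, so its fibers are single points for $z\neq q$ and the exceptional $\PP^1=\pi_1^{-1}(q)$ for $z=q$. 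Applying the bijection $j_2$ preserves these cardinalities, so $\pi|_{\qH}^{-1}(z)$ is a point for $z\neq q$ and a $\PP^1$ over $q$, realising $\qH$ as the topological blow-up of $\CP$ at $q$. To identify the exceptional fiber with the twistor line $\pi^{-1}(q)=L_{q,q^*}$, I will unpack $j_2(q,\ell)=(q^*\times\ell,\ell)$ as $\ell$ runs over lines through $q$ and check that the resulting pairs satisfy $p\in q^*$ and $\ell\ni q$, i.e.\ lie in $L_{q,q^*}$.

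For the second claim, the parallel argument using $\pi=\pi_2^*\circ j_1$ and $j_1(\Hm)=\Hm$ gives
\[
\pi|_{\Hm}^{-1}(z)\;=\;j_1\bigl((\pi_2^*)^{-1}(z)\cap\Hm\bigr)\;=\;j_1\bigl(\pi_2^{-1}(z^*)\cap\Hm\bigr)\;=\;j_1\bigl(\pi_2|_{\Hm}^{-1}(z^*)\bigr).
\]
By Corollary~\ref{fritz}, $\pi_2|_{\Hm}$ is the blow-up of $\CPv$ at $m$, so the fiber above is a single point when $z^*\neq m$ and a $\PP^1$ when $z^*=m$. Since the anti-linear involution $*$ satisfies $(m^*)^*=m$, the condition $z^*=m$ is equivalent to $z=m^*$, so $\pi|_{\Hm}$ collapses the $\PP^1$ given by $j_1(\pi_2^{-1}(m))$ (which one checks equals the twistor fiber $L_{m^*,m}=\pi^{-1}(m^*)$) to $m^*$, and is a bijection elsewhere.

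The only real conceptual subtlety is that $\pi$ is not holomorphic, so "blow-up" must be read in the smooth (or real-analytic) sense: $\pi|_{\qH}$ is a smooth surjection onto $\CP$ contracting an embedded $\PP^1$ to a point and restricting to a diffeomorphism off that fiber. Once this is granted, the proof is a mechanical chase of the two identities above, and I expect no serious obstacle beyond verifying that the exceptional fibers are precisely the claimed twistor lines.
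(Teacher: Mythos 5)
Your proof is correct and follows essentially the same route as the paper: both arguments combine the factorizations $\pi=\pi_1\circ j_2$ and $\pi=\pi_2^*\circ j_1$ from \eqref{12j} with the $j_2$-invariance of $\qH$ (resp.\ $j_1$-invariance of $\Hm$) from the preceding lemma and Corollary~\ref{fritz}. You are slightly more explicit than the paper about restricting the fiber computation to the surface and about identifying the exceptional fiber with the relevant twistor line, but the underlying idea is identical.
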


\subsection{Orthogonal complex structures}
We are now in a position to use the theory of Subsection \ref{acs} to deduce
an application of Corollary \ref{piH}. Recall the concept (Definition
\ref{OCS}) of an orthogonal complex structure (that we shall abbreviate OCS)
relative to the Fubini-Study metric.

\begin{proposition}
The complement of a point $q$ in $\CP$ admits an OCS $J_q$, inducing the
opposite orientation to the standard complex structure $J$. Moreover, the
action of $J_q$ on $T_p\CP$ is defined by reversing the sign of $J$ on the
complex line $L$ tangent to the projective line containing $p$ and $q$.
\end{proposition}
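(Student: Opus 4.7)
The plan is to exhibit an explicit section $s\colon \CP\setminus\{q\} \to \FF$ of the twistor projection $\pi$ whose image is a complex submanifold of $(\FF,\JJ)$, and then invoke the fundamental twistor principle recalled in the Introduction, which equates integrable orthogonal almost complex structures on the base with holomorphic sections of $\pi$.

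By Corollary \ref{piH}, the restriction $\pi|_{\qH}\colon \qH \to \CP$ realizes $\qH$ as the blow-up of $\CP$ at $q$, and in particular induces a bijection $\qH\setminus\pi^{-1}(q) \to \CP\setminus\{q\}$; I would let $s$ be its inverse. For $p\ne q$, one can see directly that $s(p) = (p^*\times(p\times q),\,p\times q)$: the defining equations of $\qH$ together with $\pi(x,\ell)=p$ force $\ell$ to be the unique line joining $p$ and $q$, namely $\ell = p\times q$, and force $x$ to be the intersection of $\ell$ with the Hermitian-dual line $p^*$. The image of $s$ is therefore $\qH\setminus\pi^{-1}(q)$, which is open in the complex submanifold $\qH\subset\FF$ cut out by the holomorphic linear equation $q\ell = 0$, and thus is itself a complex submanifold of $\FF$. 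Applying the twistor integrability principle together with the orthogonality built into the definition of $\pi$ in Subsection \ref{acs}, the corresponding $J_q$ is an OCS on $\CP\setminus\{q\}$.

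For the geometric description of $J_q$ at $p\in\CP\setminus\{q\}$, I would unpack the splitting \eqref{TxCP} at the twistor fiber point $s(p) = (x,\ell)$, taking $p$ as the base point. The tangent space $T_p\CP$ decomposes into two complex lines tangent respectively to the projective lines $\ell = p\times q$ (the line through $p$ and $q$) and $x^*$, both of which pass through $p$. By the definition of the twisted horizontal complex structure $J'$ in Subsection \ref{acs}, $J_q$ agrees with $J$ on one summand and equals $-J$ on the other, so $J_q$ and $J$ induce opposite orientations on $T_p\CP$. The main obstacle is the orientation bookkeeping in this final step: one must trace the unitary splitting \eqref{osum} at $s(p)$ through the identifications in \eqref{TxFF}--\eqref{TxCP} to verify that the sign flip occurs precisely on the summand tangent to the line $pq$, as asserted in the statement.
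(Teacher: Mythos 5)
Your construction of the explicit section $s(p) = (p^*\times(p\times q),\,p\times q)$ and the identification of its image with $\qH\setminus\pi^{-1}(q)$ is correct and gives the first part of the statement; this is essentially the paper's argument via Corollary \ref{piH}. However, you explicitly leave the second part -- that the sign flip occurs on the summand tangent to the line $pq$ rather than the other one -- as an unverified "obstacle," so the proof is incomplete exactly where the content lies.

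The paper settles this second part by a different, cleaner observation that avoids the tensor-product bookkeeping altogether. The surface $\qH$ is foliated by the $\pi_2$-fibers $\pi_2^{-1}(\ell)$ for $\ell$ ranging over the lines through $q$, and each such fiber is a $\JJ$-holomorphic curve in $\FF$. The twistor projection $\pi(p,\ell)=p^*\times\ell$ is anti-holomorphic in $p$ for fixed $\ell$, so $\pi$ restricted to $\pi_2^{-1}(\ell)$ is an anti-holomorphic diffeomorphism onto the line $\ell\subset\CP$. Pushing forward the complex structure of $\qH$ therefore gives $-J$ on the tangent to each such line, and since the line through a given $p\ne q$ and $q$ is exactly one of these $\ell$'s, the sign reversal occurs precisely on the complex line tangent to $pq$, as claimed. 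If you do want to push your route through \eqref{TxCP}, be careful: that formula must be read at the fiber point $s(p)=(x,\ell)$, and one needs to re-derive which of $L$, $L^\top$ is tangent to $\ell$ directly from the tensor factors (using $\CC^3=\langle x\rangle\oplus\langle p\rangle\oplus\langle\ell^*\rangle$); the brief verbal gloss in Subsection \ref{acs} should be checked against that computation rather than taken at face value, since a literal reading can easily lead to the opposite sign.
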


\begin{proof}
The complex structure $J_q$ is that of $\qH\sm\pi^{-1}(q)$, which $\pi$ bijects
onto $\CP\sm\{p\}$. The fact that this complex structure satisfies \eqref{gFS}
follows from the remarks above. The second statement is a consequence of the
fact that the complex surface $\qH$ is a union of fibers $\pi_2^{-1}(\ell)$
each of which projects to a line $\ell$ passing through $q$.
\end{proof}

\begin{remark}
  Note that $J_q$ does not extend to $\CP$, for one thing $\pi$ has
no global sections over $\CP$ even topologically. Thus, $\CP\sm\{q\}$ is a
maximal domain of definition for $J_q$.
\end{remark}

The results of this subsection point to the analogy between the family of
$(1,0)$ (respectively, $(0,1)$) surfaces in the twistor space $\FF$ and
projective planes in the twistor space $\PP^3$. Two members of each family
intersect in a projective line, and each member contains exactly one twistor
fiber, cf.\ \cite{sv1,shapiro}.

In the next subsection, we shall effectively illustrate the appearance of some
genus zero $J_q$-holomorphic curves in the Fubini-Study domain $\CP\sm\{q\}$.

\subsection{Spheres and dumbbells}             
We shall now consider the effect of the twistor fibration on the simplest
holomorphic objects of bidegree $(1,1)$ that we have defined, namely the
curves
\[ L_{q,m} = \{(p,\ell)\mid p\ell=0,\ q\ell = 0,\ pm=0\}.\]
They form a family $\bar\Vv$ that includes two special cases:
\begin{itemize}
  \item If $q\in m$ then $\pi$ maps $L_{q,m}=\PUqm$ to the union $q^*\cup m$ of
    the two lines in $\CP$ intersecting in the point $q^*\times m$. For
    example, if $q=[1:0:0]$ and $m=[0:1:0]^\vee$ then
\[ q^*\cup m=\{[z_0:z_1:z_2]:z_0=0\hbox{ or }z_1=0\}.\]
\item If $q^*=m$ so that then our curve $L_{q,m}=\pi^{-1}(q)$ is a twistor
  fiber, and projects to the single point $q\in\CP$.
\end{itemize}
In the generic case, $L_{q,m}$ is biholomorphic to $\PP^1$, and $\pi$ embeds it
in $\CP$. For if $(p,\ell)\in L_{q,m}$ and $\pi(p,\ell)=r$ then $\ell=r\times
q$ and $p=\ell\times m$. The image of $L_{q,m}$ will therefore be homeomorphic
to a 2-sphere. Our next results make this precise.

\begin{lemma}
Let $L_{q,m}\in\Vv$, with $q=[q_0:q_1:q_2]$, $m=[m_0:m_1:m_2]$ and
$qm\ne0$. Then $\pi(L_{q,m})=$ $\{z\in\CP \mid z\,\Phi z^* = 0\}$, where
\begin{equation}\label{Phi}
\Phi = \left(\begin{matrix}
m_1q_1+m_2q_2 & -m_0q_1 & -m_0q_2\\
-m_1q_0 & m_0q_0+m_2q_2 & -m_1q_2\\
-m_2q_0 & -m_2q_1 & m_0q_0+m_1q_1
\end{matrix}
\right).
\end{equation}
\end{lemma}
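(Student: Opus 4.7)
The plan is to parametrize the surface $\pi(L_{q,m})$ by reversing the twistor projection and tracing the incidence conditions. For any $(p,\ell)\in L_{q,m}$, the defining relations give $q,p\in\ell$ and $p\in m$; moreover $z:=\pi(p,\ell)=p^*\times\ell$ is by construction the intersection of $\ell$ with the line $p^*\in\CPv$, so $z\in\ell$ and $zp^*=0$.

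Assuming first that $z\not\propto q$, the line $\ell$ must be the unique line through $z$ and $q$, hence $\ell = z\times q$; and $p=\ell\cap m=(z\times q)\times m$. The standard double cross product identity then yields, up to an irrelevant projective scalar,
\[ p \;=\; (zm)\,q \;-\; (qm)\,z. \]
The remaining twistor condition $zp^*=0$ now becomes an equation in $z$ alone, namely
\[ 0 \;=\; zp^* \;=\; \overline{(zm)}\,(zq^*) \;-\; \overline{(qm)}\,(zz^*). \]
Taking the complex conjugate of both sides (using $\overline{zq^*}=qz^*$ and $\overline{zz^*}=zz^*$) gives the equivalent equation
\[ (qm)(zz^*)\;-\;(zm)(qz^*) \;=\; 0. \]
Expanding this as a sesquilinear form in $z$ identifies it with $z\Phi z^*$ for the matrix with entries $\Phi_{ij}=(qm)\,\delta_{ij}-m_iq_j$, which agrees with the displayed matrix \eqref{Phi} entry by entry.

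For the converse inclusion, I would run the argument backwards: given $z$ with $z\Phi z^*=0$ and $z\not\propto q$, define $\ell:=z\times q$ and $p:=(z\times q)\times m=(zm)q-(qm)z$. Elementary determinant identities give $p\ell=q\ell=pm=0$, so $(p,\ell)\in L_{q,m}$, and the computation above run in reverse shows $\pi(p,\ell)=z$. The boundary case $z\propto q$ always satisfies $z\Phi z^*=0$ trivially (both summands equal $(qm)|q|^2$), and a fiber of $L_{q,m}$ over $q$ is obtained either by continuity of $\pi|_{L_{q,m}}$, or directly by choosing a line in the pencil through $q$ whose intersection with $m$ is Hermitian-orthogonal to $q$, a single linear condition on a one-dimensional projective pencil.

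The main obstacle is essentially bookkeeping with the Hermitian pairings: the natural substitution produces a form with conjugated entries $\overline{zm}$ and $\overline{qm}$, and one must take its complex conjugate to recognise it as the stated equation with the unconjugated matrix $\Phi=(qm)\,I-m\otimes q$. Once this reconciliation is made, every entry of $\Phi$ falls out of the outer product $m\otimes q$ without any further work.
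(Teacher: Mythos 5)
Your proof is correct, but it follows a genuinely different route than the paper's. The paper observes that $z\in\pi(L_{q,m})$ iff the twistor fiber $\pi^{-1}(z)=L_{z,z^*}$ meets $L_{q,m}$, and then simply applies the already-established intersection criterion of Corollary~\ref{condition-intersection} to the pair $L_{q,m}$, $L_{z,z^*}$, obtaining $(z^*\times m)(z\times q)=(qm)(zz^*)-(zm)(qz^*)=0$ in one line. You instead invert $\pi|_{L_{q,m}}$ directly: recover $\ell=z\times q$ and $p=(z\times q)\times m=(zm)q-(qm)z$ from $z$, and impose the remaining condition $zp^*=0$, which after conjugation gives the same sesquilinear form. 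Your route is more elementary and self-contained (it does not invoke the intersection lemma), at the cost of having to treat the exceptional locus $z\propto q$ and the converse inclusion by hand; the paper's argument is shorter because Corollary~\ref{condition-intersection} already packages both directions of the "if and only if". The identification $\Phi=(qm)I-mq$ at the end is identical in both. One small remark: in your parametrization $p=(z\times q)\times m$ the triple-product expansion is an exact vector identity, so the phrase "up to an irrelevant projective scalar" is unnecessary there; the scalars only become irrelevant once you pass to projective classes.
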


\begin{proof}
The set $\pi(L_{q,m})$ coincides with the set of points $z\in\CP$ such that the
fiber $\pi^{-1}(z)$ meets $L_{q,m}$. But
$\pi^{-1}(z) = L_{z,z^*}$, so
\[ \pi(L_{q,m})=\{z\in\CP \mid L_{z,z^*}\cap L_{q,m}\ne\es\}.\]
In view of Lemma~\ref{lemmaintersection11} and
Corollary~\ref{condition-intersection}, we deduce that $L_{q,m}$ and
$L_{z,z^*}$ have a non-empty intersection if and only if (choosing vector
representatives) the expression
\[ (z^*\times m)(z\times q) = (qm)(zz^*) - (zm)(qz^*)\]
vanishes. The right-hand side equals $z\Phi z^*$ where $\Phi$ is the
$3\times3$ matrix $(qm)I-mq$, where $qm=q_0m_0+q_1m_1+q_2m_2$.
whose entries are as stated.
\end{proof}

\begin{proposition}
Let $qm\ne0$ and $q^*\ne m$. Then $\pi$ maps $L_{q,m}$ onto a round 2-sphere in
real affine coordinates.
\end{proposition}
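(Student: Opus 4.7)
The strategy is to exploit the $U(3)$--equivariance of the twistor projection together with the explicit formula $\pi(L_{q,m})=\{z\in\CP\mid z\Phi z^*=0\}$ from the previous lemma, in order to reduce the problem to a single explicit computation in a canonical normal form.

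First, I would choose a unitary basis of $\CC^3$ so that $q=[1:0:0]$. The hypothesis $qm\ne0$ forces $m_0\ne0$, so after rescaling I may take $m=[1:m_1:m_2]^\vee$. The residual stabilizer of $q$ in $U(3)$, which is essentially $U(1)\times U(2)$, acts on the pair $(m_1,m_2)\in\CC^2$ by the $U(2)$--factor and, crucially, acts by unitary (in particular affine) transformations on the chart $\CP\sm q^*=\{z_0\ne0\}$. Using this action I can bring $(m_1,m_2)$ to $(r,0)$ for some $r\ge0$. The hypothesis $q^*\ne m$ excludes $r=0$, so the normal form is
\[q=[1:0:0],\qquad m=[1:r:0]^\vee,\quad r>0.\]

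Next I would plug this into the previous lemma. Here $qm=1$, so $\Phi=I-mq$ becomes
\[ \Phi=\begin{pmatrix} 0&0&0\\ -r&1&0\\ 0&0&1\end{pmatrix},\qquad
z\Phi z^*=|z_1|^2+|z_2|^2-rz_1\bar z_0.\]
Passing to the affine chart $\{z_0=1\}=\CP\sm q^*$ with real coordinates $z_1=x_1+iy_1$, $z_2=x_2+iy_2$, the defining equation $|z_1|^2+|z_2|^2=rz_1$ splits into its imaginary and real parts
\[ ry_1=0,\qquad x_1^2+y_1^2+x_2^2+y_2^2=rx_1.\]
Since $r>0$ the first forces $y_1=0$, and the second then rearranges to
\[\bigl(x_1-\tfrac{r}{2}\bigr)^2+x_2^2+y_2^2=\tfrac{r^2}{4},\]
a round $2$--sphere of radius $r/2$ in the $3$--dimensional affine hyperplane $\{y_1=0\}\subset\RR^4$. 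Combined with the fact (from Remark~\ref{intersectwithfbire} and Lemma~\ref{f1}) that $L_{q,m}\simeq\PP^1$ and $\pi$ restricts to an embedding on a generic $(1,1)$--curve, this gives the image as a topological $S^2$ sitting as a metric sphere.

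The only real subtlety is pinning down what affine chart is intended, since "round" is not a projective notion. The natural choice, which also makes the reduction in the first step legitimate, is the chart $\CP\sm q^*$ (the complement of the line at infinity attached to $q$): the $U(3)$--stabilizer of this chart acts by genuine unitary transformations of $\CC^2$ and so preserves roundness, whereas a generic element of $U(3)$ does not. Once that is fixed, all the work is in the normal form, and the subsequent algebra is a short direct calculation; no further obstacles should arise.
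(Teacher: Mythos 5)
Your proof is correct and takes essentially the same approach as the paper: normalize $q=[1:0:0]$, $m=[1:r:0]^\vee$ by a unitary transformation (the paper writes $r=2\rho$), compute $\Phi$ from the preceding lemma, pass to the affine chart $z_0=1$, and read off $y_1=0$ together with a sphere equation of radius $r/2$. Your additional remarks on the residual $U(1)\times U(2)$ stabilizer and on why ``roundness'' is a meaningful notion in the chart $\CP\sm q^*$ simply make explicit details that the paper leaves implicit.
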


\begin{proof}
The matrix $\Phi=(qm)I-mq$ is singular, we see immediately that $q\Phi=0$ and
$\Phi m=0$. In order to analyse the equation $z\Phi z^*=0$ we can first apply a
unitary transformation so as to assume that
\[ q = [1:0:0],\qquad m=[1:2\rho:0],\]
with $\rho$ a positive real number (which will turn out to be the radius of our
sphere).
\[ \Phi=\left(\begin{matrix}
0 & 0 & 0\\
-2\rho & 1 & 0\\
0 & 0 & 1
\end{matrix}
\right),\]
and we obtain
\begin{equation}\label{mm}
  |z_1|^2+|z_2|^2 - 2\rho\bar z_0z_1 = 0.
\end{equation}

There are no solutions with $z_0=0$, so we set $z_0=1$ and work with the affine
coordinates
\[ (z_1,z_2) = (x_1+iy_1,\>x_2+iy_2),\]
and their real and imaginary components on the right-hand side. Then
\[\left\{\begin{array}{l}
x_1^2+y_1^2+x_2^2 + y_2^2 - 2\rho x_1 = 0\\
y_1=0.
\end{array}\right.\]
These equations obviously define a 2-sphere in the Euclidean space $\RR^3$
defined by $y_1=0$. For the remark below, it will be convenient to set
$x=x_1-\rho$, so that the 2-sphere becomes
\[  x^2+x_2^2+y_2^2 = \rho^2\]
and with centre $(x,x_2,y_2)=(0,0,0)$.
\end{proof}

Expanding slightly the proof verifies the special cases discussed above. In the
limit $m_1\to0$ (so $q^*=m$) the 2-sphere degenerates to a single point. If, on
the other hand, we had allowed $m_0=0$, we would have been led to the union of
the projective lines $z_0=0$ and $z_1=0$, again as expected. These observations
are illustrated by Figure \ref{rot}, whose justification we work towards.

The 2-sphere $\pi(L_{q,m})$ is not homogeneous for the Fubini-Study
metric because it is not an orbit of $SU(3)$. However, it is a union
of circles
\[ \{(x,x_2,y_2):x^2+x_2^2+y_2^2 = \rho^2\}\]
that are orbits of $U(1)\subset \SU(3)$. It is therefore isometric to a surface
of revolution with the induced Fubini-Study metric. In order to identify the
shape of this, we shall use the methods of \cite[\S8]{ABS}. Set
\[ z_1=x=\rho\sin v,\qquad z_2=x_2+iy_2=\rho e^{iu}\!\cos v,\]
so the $u$ and $v$ represent longitude and latitude respectively on the
2-sphere. In our inhomogeneous coordinates $(z_1,z_2)$, the restriction of the
Fubini-Study metric to $\pi(L_{q,m})$ takes the form
\[  \frac{|dz_1|^2+|dz_2|^2}{1+\rho^2} -
\frac{|\bar z_1dz_1+\bar z_2dz_2|^2}{(1+\rho^2)^2}.\]
A computation shows that this reduces to the first fundamental form
\[ \frac{\rho^2\cos^2v(1+\rho^2\sin^2v)}{(1+\rho^2)^2}du^2 +
\frac{\rho^2}{1+\rho^2}dv^2.\]
We seek a profile curve $(f(v),g(v))$ which generates a surface of revolution
with the same first fundamental form, which requires setting
\[\begin{array}{l}
f(v) =\ds \frac{\rho\cos v\sqrt{1+\rho^2\sin^2v}}{1+\rho^2}\\[10pt]
f'(v)^2+g'(v)^2 =\ds \frac{\rho^2}{1+\rho^2}.
\end{array}\]
In order to plot Figure \ref{rot}, we determined $g(v)$ by numerical
integration.

\begin{figure}
  \includegraphics[scale=0.3]{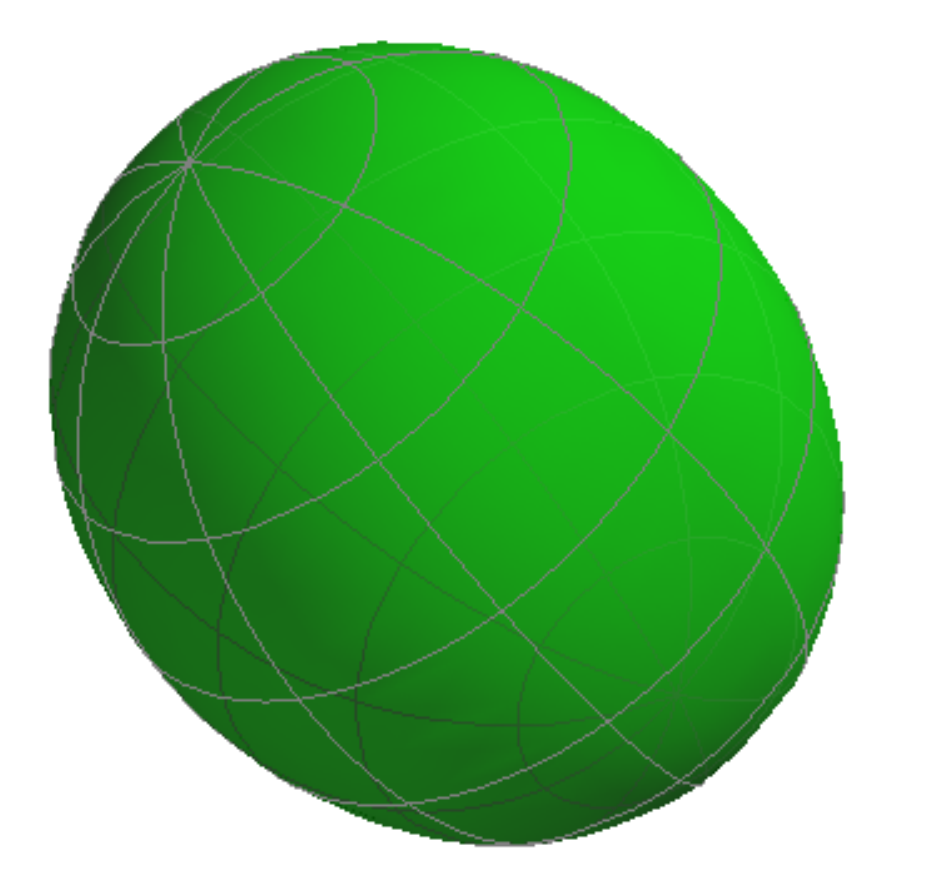}\quad\
  \includegraphics[scale=0.35]{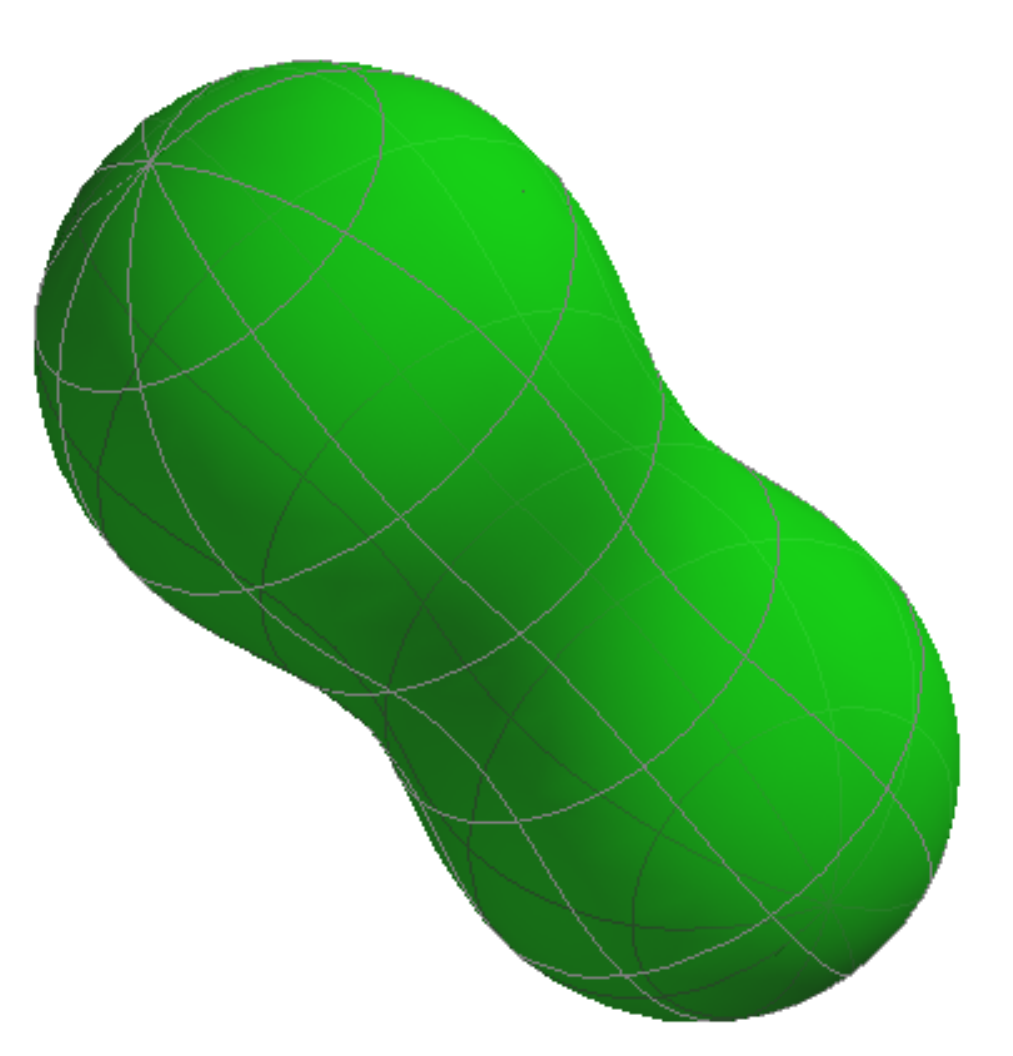}\quad
  \includegraphics[scale=0.4]{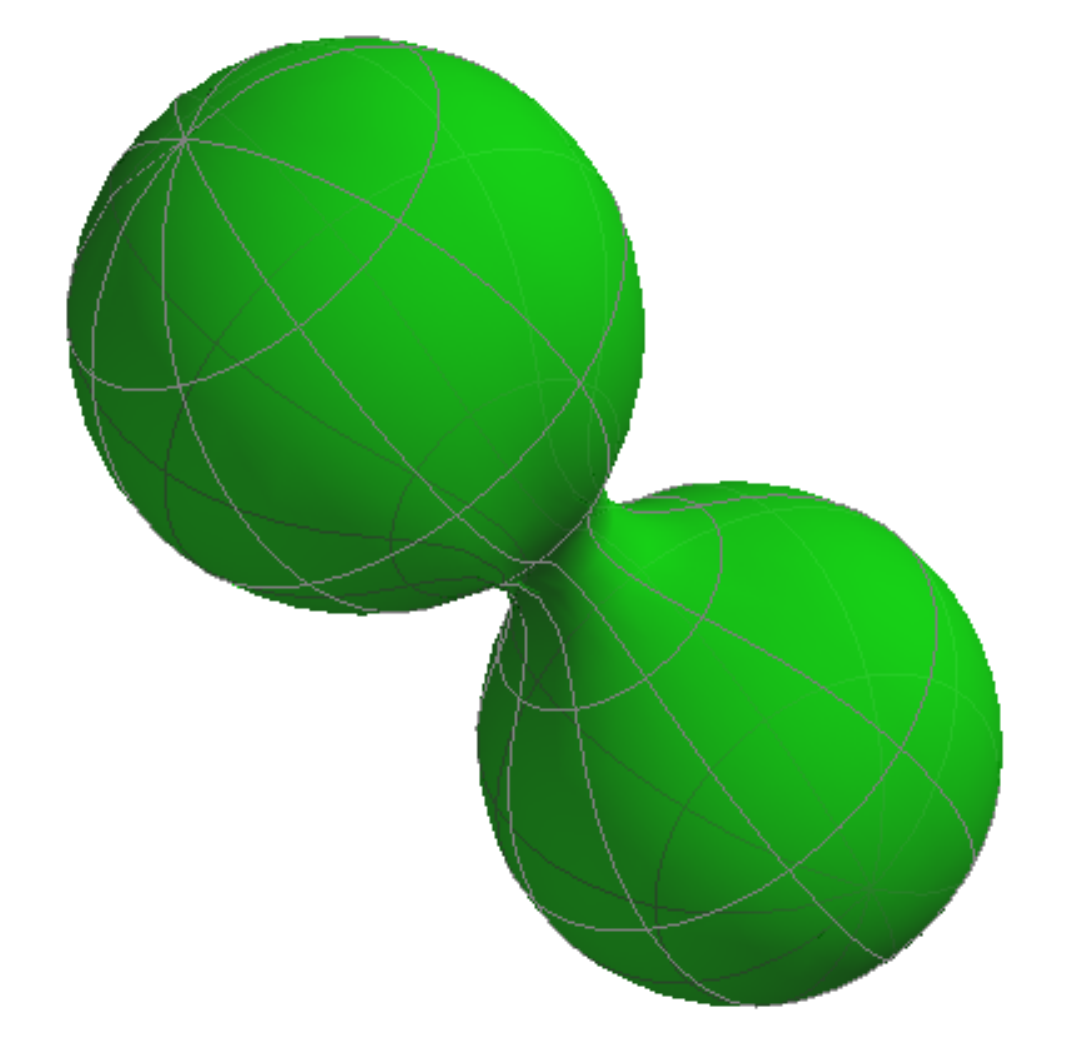}
  \vspace{20pt}
  \caption{Twistor images of the curve $L_{[1:0:0],[1:2\rho:0]}$ with $\rho=\frac12,2,8$}
  \label{rot}
\end{figure}

\subsection{Surfaces of bidegree (1,d)}
This interesting class of rational surfaces $S$ incorporates
increasingly wild examples as $d$ becomes large. On the one hand, $S$
is obtained by blowing up $k$ points in $\CP$, on the other it defines
a holomorphic $d$-fold branched cover over $\CPv$. 
Using Remark~\ref{formulac2}, we deduce that
\[ 9-k = c_1^2 = -d^2-d+8.\]
Such a surface will hit a generic twistor line in $d+1$ points, a number that
can be regarded as its `twistor degree', by comparison to the case of
$\PP^3\to S^4$.
  
The case $(1,2)$ is mentioned in the Introduction and corresponds to a del Pezzo
surface $P$ of degree 2 that double covers $\CP$ branched over a quartic. Here,
$P$ is the zero set of a polynomial lying in the summand $\Pp_{1,2}$ in the
notation of \eqref{Pp}.

\begin{example}
Let us consider the surface in the flag defined by
\[p_0(2\ell_0\ell_1+\ell_2^2)+p_1(2\ell_0\ell_2+\ell_1^2)+
p_2(2\ell_1\ell_2+\ell_0^2)=0.\]
This can be seen as the blow up of $\PP^{2\vee}$ in $7$ points in the
following way.  Such points can be obtained by imposing that, for a
fixed $p$ the previous equation is linearly dependent of $p\ell=0$,
i.e.
$$
\left(\begin{matrix}
2\ell_0\ell_1+\ell_2^2 & \ell_{0}\\
2\ell_0\ell_2+\ell_1^2 & \ell_{1}\\
2\ell_1\ell_2+\ell_0^2 & \ell_{2}
\end{matrix}\right)
$$
has rank one, that is if and only if
$$
\begin{cases}
\ell_2^3-\ell_0^3=0\\
\ell_0\ell_1^2+\ell_1\ell_2^2-2\ell_0^2\ell_2=0\\
\ell_0^2\ell_1+\ell_1^2\ell_2-2\ell_0\ell_2^2=0
\end{cases}
$$
Solving this system we get the following seven points.  There is a
trivial solution for $\ell_{0}=\ell_{1}=\ell_{2}$.  The other $6$
solutions arise in couples by imposing, $\ell_{2}=\eta^{k}\ell_{0}$ for $k=0,1,2$ and $\eta=e^{2\pi i/3}$. Therefore the seven
points are
$$
\begin{matrix}
[0:1:0], &[1:1:1], & [1:-2:1],  &
[1:\bar\eta:\eta], & [1:-2\bar\eta:\eta], &[1:\eta:\bar\eta], &
[1:-2\eta:\bar\eta].
\end{matrix}
$$

The surface can be seen as a double cover of $\PP^2$ branched over a
quartic. We identify the latter by studying the system
$$\begin{cases}
  p_0(2\ell_0\ell_1+\ell_2^2)+p_1(2\ell_0\ell_2+\ell_1^2)+
  p_2(2\ell_1\ell_2+\ell_0^2)=0\\
p_0l_0+p_1l_1+p_2l_2=0
\end{cases}$$

If $p_1\neq0$, multiplying by $p_1$ the first equation, we get
$$p_1p_2\ell_0^2+2p_1^2\ell_0\ell_2+p_0p_1\ell_2^2-(p_0\ell_0+p_2\ell_2)^2=0,$$
that is
$$(p_1p_2-p_0^2)\ell_0^2+2(p_1^2-p_0p_2)\ell_0\ell_2+(p_0p_1-p_2^2)\ell_2^2=0,$$
and the discriminant is:
$$ {\ts\frac14}\Delta = (p_1^2-p_0p_2)^2-
(p_1p_2-p_0^2)(p_0p_1-p_2^2)=
p_1^4-3p_0p_1^2p_2+p_1p_2^3+p_0^3p_1-p_0^2p_2^2
$$
If $p_{1}=0$ it is easy to check that we still obtain points in the previous quartic.
\end{example}

\begin{example}
Let us consider the surface in the flag defined by
\[p_{0}\ell_{1}^{2}+p_{1}\ell_{2}^{2}+p_{2}\ell_{0}^{2}=0.\]
This can be seen as the blow up of $\PP^{2\vee}$ in $7$ points in the
following way.  Such points can be obtained by imposing that, for a
fixed $p$ the previous equation is linearly dependent of $p\ell=0$,
i.e.
$$
\left(\begin{matrix}
\ell_{1}^{2} & \ell_{0}\\
\ell_{2}^{2} & \ell_{1}\\
\ell_{0}^{2} & \ell_{2}
\end{matrix}\right)
$$
has rank one, that is if and only if
$$
\begin{cases}
\ell_{1}^{3}-\ell_{0}\ell_{2}^{2}=0\\
\ell_{0}^{3}-\ell_{2}\ell_{1}^{2}=0\\
\ell_{2}^{3}-\ell_{1}\ell_{0}^{2}=0
\end{cases}
$$
Solving this system we get the following seven points for $k=0,\dots, 6$
$$
[\zeta^{3k}:\zeta^{k}:1],
$$
where $\zeta$ is any seventh root of one.

The surface can be seen as a double cover of $\PP^2$ branched over a
quartic. We identify the latter by studying the system
$$\begin{cases}
 p_{0}\ell_{1}^{2}+p_{1}\ell_{2}^{2}+p_{2}\ell_{0}^{2}=0\\
p_0l_0+p_1l_1+p_2l_2=0
\end{cases}$$

If $p_0\neq0$, multiplying by $p_0^{2}$ the first equation, we get
$$p_{0}^{3}\ell_{1}^{2}+p_{0}^{2}p_{1}\ell_{2}^{2}+p_{0}^{2}p_{2}\ell_{0}^{2}=0$$
that is
$$(p_{0}^{3}+p_{1}^{2}p_{2})\ell_{1}^{2}+2p_{1}p_{2}^{2}\ell_{1}\ell_{2}+(p_{0}^{2}p_{1}+p_{2}^{3})\ell_{2}^{2}=0,$$
and the discriminant is:
$$ {\ts\frac14}\Delta = -p_{0}^{2}(p_{0}^{3}p_{1}+p_{1}^{3}p_{2}+p_{2}^{3}p_{0}).
$$
If $p_{0}=0$ it is easy to check that we still obtain points in the quartic defined by $p_{0}^{3}p_{1}+p_{1}^{3}p_{2}+p_{2}^{3}p_{0}=0$.
\end{example}

\section{Twistor projections of smooth toric surfaces}\label{Tw1}

Having studied the twistor images of curves and (at the start of
Section \ref{Twn}) surfaces of bidegree $(1,0)$ or $(0,1)$, we turn
attention to the next simplest case. Our general goal will be to
determine the subsets of $\CP$ defined by

\begin{definition}\label{def-discriminant}
Let $S$ be any element of $|\Of(a,b)|$. Its \textit{twistor discriminant locus}
$\Dd(S)$ is the union
\[\Dd=\Dd_0\cup \Rr,\]
where $\Dd_0=\{q\in\CP\mid \pi^{-1}(q)\subset S\}$ and $\Rr$ is the branch
locus of $\pi_{|S}:S\to\CP$ consisting of points $p\in \CP$ for which
$|\pi^{-1}(p)|<a+b$.
\end{definition}

We shall determine the discriminant locus for smooth surfaces in $\FF$ of
bidegree $(1,1)$. Any such surface is defined by the canonical form
\begin{equation}\label{Aone}
A=A_1=\left(\begin{array}{ccc}
  0&0&0\\
  0&1&0\\
  0&0&\la
\end{array}\right),\qquad \la\in\{0,1\},
\end{equation}
from Lemma \ref{matricesA}. In order to make sense of twistorial
properties, and since we are ultimately interested in the metric
geometry of $\Dd$, we work up to unitary equivalence.

Recall that $S_A,S_{A'}$ are unitarily equivalent if and only if there exists
$X$ in $\SU(3)$ (equivalently, in $U(3)$ such that
\[A' = X^*\!AX = X^{-1}\!AX.\]
Since \eqref{Aone} commutes with elements of the standard maximal torus $T^2$
of $\SU(3)$, our constructions will be $T^2$-invariant. This is not surprising,
since it is well known that the del Pezzo surfaces arising in Proposition
\ref{DelP} are in fact toric. We first treat the case in which $S_A$ is real
(meaning that $j(S)=S$), whose conclusion will motivate the non-real case.

\subsection{Real (1,1)-surfaces}\label{real11}
Here we focus on the $j$-invariant case, for the moment without assuming
smoothness. It follows from the definitions \eqref{SA} and \eqref{jjj} that
$S_A$ is $j$-invariant if and only if $A=A^*$, i.e.\ $A$ is Hermitian. Such a
matrix has real eigenvalues, is diagonalizable over $\SU(3)$, and $S_A$ can be
represented by a real diagonal matrix. Proposition
\ref{classification-singular} implies that $S_A$ is either smooth and of type
$A_1$ in Lemma \ref{matricesA}, or reducible and of type $A_2$.

The reducible surfaces correspond to matrices $A$ with two distinct
eigenvalues; any one has the form $\qH\cup H_{q^*}$, is singular on the twistor
line $L_{q,q^*}$, and contains no other twistor line. It follows that any two
are unitarily equivalent, so little more needs to be written.

In the smooth case, we can impose \eqref{Aone} with $\lambda\in(1,2]$. This
restriction arises from Remark \ref{Xratio}, and ensures that the unitary
equivalence class of $S_A$ is uniquely determined by a point of the interval
$(1,2]$.

\begin{theorem}\label{nostro}
  Let $S$ be a smooth $j$-invariant surface of bidegree $(1,1)$. The twistor
  projection $\pi$ restricted to $S$ is a degree 2 cover of $\CP$ without
  ramification, i.e. $\Rr=\es$. Moreover, $S$ contains infinitely many twistor
  fibers and $\Dd_0$ is parametrized by a circle that is the orbit of a maximal
  torus in $SU(3)$.
  \end{theorem}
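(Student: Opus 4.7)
The plan is to reduce to the canonical form of Subsection \ref{real11}, establish the generic degree of $\pi|_S$ by an intersection count, rule out ramification by a short $j$-invariance argument, and then carry out the explicit computation of $\Dd_0$ inside $\CP$. By Subsection \ref{real11}, smoothness and $j$-invariance allow us to assume $S = S_A$ with $A = \mathrm{diag}(0, 1, \la)$ and $\la \in (1, 2]$.

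For a generic $q \in \CP$, the twistor fiber $\pi^{-1}(q) = L_{q,q^*}$ is a smooth curve of bidegree $(1,1)$, on which Remark \ref{rk2001} shows $\pi_1$ and $\pi_2$ restrict to isomorphisms with $\PP^1$. Hence $\OF(1,1)|_{L_{q,q^*}} \cong \Of_{\PP^1}(2)$, and since $S \in |\OF(1,1)|$, the scheme-theoretic intersection $S \cap L_{q,q^*}$ has length $2$; thus $\pi|_S$ has degree $2$. For unramification, I would exploit that $\pi \circ j = \pi$, since the projective classes of $\ell \times p^*$ and $p^* \times \ell$ coincide, so every twistor fiber is $j$-stable. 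Given $q \in \CP$ with $S \cap \pi^{-1}(q)$ zero-dimensional, pick $x$ in the intersection; since $j(S) = S$ by hypothesis and $j$ is fixed-point-free on $\FF$, the point $j(x)$ is a second, distinct point of the intersection. Two distinct points of length $2$ must both be reduced, so $\pi|_S$ is unramified over $\CP \sm \Dd_0$, giving $\Rr = \es$.

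Next I would identify $\Dd_0$ directly. Parametrize the twistor line $L_{q,q^*}$ by $\ell$ in the pencil $\{q\ell = 0\}$, with $p = \ell \times q^*$; the condition $\pi^{-1}(q) \subset S_A$ then becomes the divisibility of the quadratic form
\[ Q(\ell) \;:=\; p A \ell \;=\; (1 - \la)\, \bar q_0\, \ell_1 \ell_2 \;+\; \la\, \bar q_1\, \ell_0 \ell_2 \;-\; \bar q_2\, \ell_0 \ell_1 \]
by the linear form $q\ell$ inside $\CC[\ell_0, \ell_1, \ell_2]$. Writing $Q = (q\ell)(a_0 \ell_0 + a_1 \ell_1 + a_2 \ell_2)$ and comparing the six coefficients yields a short linear system; using $\la \in (1, 2]$ one checks that the only non-trivial solutions force $q_1 = 0$ and $|q_2|^2 = (\la - 1)\, |q_0|^2$. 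Up to scaling this gives
\[ \Dd_0 \;=\; \{\, [1 : 0 : \sqrt{\la - 1}\, e^{i\theta}] \mid \theta \in [0, 2\pi)\,\}, \]
a circle which is precisely the orbit, under the standard maximal torus $T^2 \subset SU(3)$, of any one of its points (the stabilizer in $T^2$ of such a point being one-dimensional).

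The main obstacle is the case analysis in the coefficient comparison: one must use the strict inequality $\la > 1$ to rule out the branches $q_0 = 0$ and $q_2 = 0$, where the required modulus relation would have the wrong sign, as well as the non-degeneracy of the three pure-square coefficients to handle the case where all $q_i \neq 0$. Once this case analysis is dispatched, the circular structure of $\Dd_0$ and its identification as a $T^2$-orbit are immediate.
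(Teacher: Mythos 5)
Your proof is correct, and one step of it is a genuine improvement in approach over the paper's. The degree computation and the determination of $\Dd_0$ follow the paper's structure in spirit (the paper parametrizes $\pi^{-1}(q)$ on the $p$-side, projecting $S\cap\qH$ to a conic in $\CP$ and intersecting with the line $q^*$; you work dually on the $\ell$-side, asking when the quadratic form $pA\ell$ restricted to the pencil $q\ell=0$ vanishes, i.e.\ when $Q$ is divisible by $q\ell$ --- the resulting case analysis and final answer $q_1=0$, $|q_2|^2=(\la-1)|q_0|^2$ agree). The genuinely different part is the proof that $\Rr=\es$. The paper establishes this by an explicit tangency computation: it forms the $4\times3$ matrix coupling $q^*\times C$ with the linear condition $pq^*=0$, shows that the rank-$\le2$ condition forces $(|q_0|^2(\la-1)+|q_1|^2\la-|q_2|^2)^2+4|q_0|^2|q_2|^2(\la-1)=0$, and rules this out by positivity. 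You instead argue conceptually: $\pi\circ j=\pi$, $j(S)=S$, and $j$ has no fixed points, so any $x$ in a finite fiber $S\cap\pi^{-1}(q)$ is accompanied by the distinct point $j(x)$; a length-two scheme with two distinct support points is reduced. This is shorter, avoids coordinates entirely, and makes transparent exactly what $j$-invariance buys you --- it would in fact generalize to any $j$-invariant surface where the generic twistor-fiber intersection has length $2$. Both routes are valid; yours is cleaner for this step.
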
  
  
\begin{proof}
Fix $q\in \CP$. Recall that its twistor fiber is
\[\pi^{-1}(q) = L_{q,q^*} =
\{(p,\ell)\mid p\ell=0,\ q\ell=0,\ pq^*=0\} = \qH\cap H_{q^*}.\]
Now
\[ S_A\cap \qH = \{(p,\ell)\mid p\ell=0,\ pA\ell=0,\ q\ell=0\}\]
is non-empty if and only if
\begin{equation}\label{eq-det}
\det(p\mid pA\mid q) = 0.
\end{equation}
This equation can be written as $p\,Cp^\vee=0$, where
\[C=\left(\!\begin{array}{ccc}
    0 & q_2 & -\la q_1\\
    q_2 & 0 & (\la-1)q_0\\
   -\la q_1 & (\la-1)q_0 & 0
  \end{array}\!\right), \]
which defines a conic $\Cc$. Applying the involution $j$ shows that
\[ \pi(S_A\cap\qH)=\Cc=\pi_2^*(S_A\cap H_{q^*}).\]
The remaining equation
\begin{equation}\label{eq-retta}
    pq^* = 0
\end{equation}
asserts that $p$ lies on the line $q^*$, so there will be two such points on
$C$ for generic $q$. If $C$ contains the twistor fiber $\pi^{-1}(q)$ then $C$
must be reducible and
\[ 0=\det C = 2\la(\la-1)q_0q_1q_2,\]
so that one of $q_0,q_1,q_2$ vanishes. We obtain the following three equations
for $\Cc$:
\[\left\{\begin{array}{ll}
p_0(q_2p_1- q_1\la p_2)=0 & \hbox{if \ $q_0=0$}\\
p_1(q_2p_0+ q_0(\la-1)p_2)=0 & \hbox{if \ $q_1=0$}\\
p_2(-q_1\la p_0+ q_0(\la-1)p_1)=0 & \hbox{if \ $q_2=0$}
\end{array}\right.\]
Combining these with \eqref{eq-retta},
\begin{equation}\label{qqq}
\left\{\begin{array}{lcl}
q_0=0 & \hbox{and} & |q_1|^2\la+|q_2|^2=0\\
q_1=0 & \hbox{and} & |q_2|^2-|q_0|^2(\la-1)=0\\
q_2=0 & \hbox{and} & |q_1|^2\la+|q_0|^2(\la-1)=0.
\end{array}\right.
\end{equation}
However, our assumption that $\la>1$ rules out all but the second possibility,
so $q_1=0$ and $|q_2/q_0|=\sqrt{\la-1}$. This is indeed a circular orbit of
$\TT$ with a 1-dimensional stabilizer.

We shall now determine the branch locus $\Rr$. If $q\in\Rr$ then clearly the
line \eqref{eq-retta} is tangent to $C$. If $q_0q_1q_2=0$ then $C$ is reducible
and tangency would imply $q\in\Dd_0$. We can therefore assume that
$q_0q_1q_2\ne0$. In general, tangency implies that
\begin{equation}\label{cond-tang}
  0 = q^*\times(Cp^\vee) = (q^*\times C)\,p^\vee,
\end{equation}
in which the cross product $q^*\times C$ is a $3\times3$ matrix computed from
$C$ column by column. It follows that
\[ \left(\begin{array}{ccc} 
  \la|q_1|^2+|q_2|^2 & -(\la-1)q_0\bar q_1 & (\la-1)q_0\bar q_2\\
  -\la q_1\bar q_0 & (\la-1)|q_0|^2- |q_2|^2 & \la q_1\bar q_2\\
  -q_2\bar q_0 & q_2\bar q_1 & -(\la-1)|q_0|^2-\la|q_1|^2
  \vphantom{\lower6pt\hbox{o}}\\\hline
  \bar q_0 & \bar q_1 & \bar q_2
\end{array}\right)p^\vee = 0.\]
The $4\times 3$ matrix incorporates $q^*\times C$ and \eqref{eq-retta}, and
must therefore have rank at most 2. Note that $q^*\cdot(q^*\times C)=0$, so
$\det(q^*\times C)=0$. Since $q_0q_1q_2\ne0$, the remaining $3\times3$ minors
each tell us that
\[(|q_0|^2(\la-1)+|q_1|^2\la-|q_2|^2)^2+4|q_0|^2|q_2|^2(\la-1)=0.\]
Since $\la>0$ and $q_0q_2\ne0$, we conclude that $\Rr=\es$.
\end{proof}

\begin{remark}
Theorem \ref{nostro} gives a clear expectation of how the discriminant locus
behaves for a smooth surface $S_A$ with $A$ diagonalizable, and therefore
toric. Consider the moment mapping
\[\begin{array}{lrcl}
\mu\colon& \CP &\lra& \De\subset\RR^3\\
&[q_0:q_1:q_2] &\mapsto& (|q_0|^2,|q_1|^2,|q_2|^2)/\|q\|^2
\end{array}\]
corresponding to the action of $\TT$ on $\CP$, equipped with its Fubini-Study
symplectic 2-form. Its image is a 2-simplex $\De$, and the union of lines
$q_0q_1q_2=0$ is mapped onto the boundary $\partial\De$.

If $A$ is Hermitian, its discriminant circle is mapped to a point of
$\partial\De$. This is a midpoint of an edge of $\partial\De$ if
$\la\in\{-1,\frac12,2\}$, in which case the circle is maximal. Such circles
played a key role in the configuration of equidistant points in $\CP$
\cite{HS}. The prohibited values $\la\in\{0,1,\infty\}$ correspond to the
vertices of $\De$ for which $S_A$ is reducible and the discriminant locus
becomes a single point. We shall see that if $A$ is diagonalizable, but not
Hermitian, its discrimant locus is a
smooth 2-torus that maps to an interior point of $\De$.
  \end{remark}

\begin{corollary}\label{cordisco}
Let $S$ be as in the hypothesis of Theorem \ref{nostro}. Then $\CP\sm\Dd_0$ is
the maximal domain of an orthogonal complex structure $J_S$.
\end{corollary}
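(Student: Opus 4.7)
The plan is to construct $J_S$ on $\CP\setminus\Dd_0$ from the twistor correspondence applied to one sheet of the double cover $\pi|_S$, and then to deduce maximality from the fact that $\pi|_S$ degenerates along the contained twistor fibers.

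\textbf{Construction of $J_S$.} By Theorem~\ref{nostro}, the restriction $\pi|_S : S\cap\pi^{-1}(U)\to U$, where $U:=\CP\setminus\Dd_0$, is an unramified $2:1$ covering. Because $j$ has no fixed points on $\FF$ and leaves $S$ invariant, $j$ acts without fixed points on $S\cap\pi^{-1}(U)$ and interchanges the two sheets of this cover: if $(p,\ell)\in\pi^{-1}(q)\cap S$ then $j(p,\ell)$ is the other preimage of $q$ in $S$. By Subsection~\ref{acs} the involution $j$ sends each compatible $J$ on $T_q\CP$ to $-J$, so the two sheets are distinguished by the orientation they induce on $T_q\CP$, and we obtain a disjoint decomposition
\[
S\cap\pi^{-1}(U)=S^{+}\sqcup S^{-}
\]
with $\pi|_{S^\pm}:S^\pm\to U$ each a biholomorphism. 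Let $s:U\to S^{-}$ be the inverse of $\pi|_{S^{-}}$, chosen to match the orientation-reversing convention of $J_q$ in Section~\ref{Twy}. Since $S^{-}$ is open in the complex submanifold $S\subset\FF$, it is itself a complex submanifold of $\FF$, so the twistor principle recalled in Subsection~\ref{acs} (image of $s$ holomorphic $\Longleftrightarrow$ induced almost complex structure integrable) produces an integrable orthogonal complex structure $J_S$ on $U$, which by construction satisfies \eqref{gFS}.

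\textbf{Maximality.} Suppose $J_S$ admits an extension to an OCS on some open $W\supsetneq U$; pick any $q_0\in W\cap\Dd_0$. The extension provides a smooth local section $\tilde s$ of $\pi$ on a neighborhood of $q_0$ prolonging $s$, and by continuity $\tilde s(q_0)$ lies in the closure of $S^{-}$ inside $\FF$. Since $\pi^{-1}(q_0)\subset S$ (as $q_0\in\Dd_0$), the point $\tilde s(q_0)$ must belong to the entire contained twistor fiber $\pi^{-1}(q_0)\subset S$. Being a smooth section of $\pi$, the image $\tilde s(W)$ is a smooth real $4$-dimensional submanifold of $\FF$ on which $\pi$ is a local diffeomorphism; because it is contained in the $4$-dimensional $S$, it is in fact an open subset of $S$, and therefore $\pi|_S$ is a local diffeomorphism at $\tilde s(q_0)$. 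But at every point $x\in\pi^{-1}(q_0)\subset S$ the differential $d\pi_x$ vanishes on $T_x\pi^{-1}(q_0)\subset T_xS$, so $d(\pi|_S)_x$ has real rank at most $2$, contradicting the local diffeomorphism property. Hence $J_S$ does not extend to any point of $\Dd_0$, establishing maximality.

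\textbf{Main obstacle.} The crux is the globalization step, namely verifying that the two preimages of every $q\in U$ genuinely realize opposite orientations at $T_q\CP$, so that the splitting $S\cap\pi^{-1}(U)=S^{+}\sqcup S^{-}$ is well-defined across all of $U$. This rests on identifying the $j$-involution restricted to the fiber $\pi^{-1}(q)\cap S$ (a two-point set) with the restriction of the fiberwise antipodal map $J\mapsto -J$ on $\pi^{-1}(q)\cong\PP^1$, which follows from the discussion of $j$ in Subsection~\ref{acs} together with Remark~\ref{remarkintersectionj}. Once this $S^{+}/S^{-}$ dichotomy is in place, both the construction and the maximality argument are essentially routine.
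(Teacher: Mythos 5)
Your maximality argument is sound, and is in fact a useful addition: the paper's own proof only constructs $J_S$ and does not spell out why it fails to extend across $\Dd_0$, whereas your rank argument for $d(\pi|_S)$ along the contained fiber $\pi^{-1}(q_0)$ does the job cleanly.

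However, the construction step contains a genuine error. You claim that the two sheets of the unramified double cover $\pi|_S \colon S\cap\pi^{-1}(U)\to U$ can be told apart by the orientation they induce on $T_q\CP$, because $j$ sends a compatible $J$ to $-J$. This is false in real dimension four: if $\{v_1,Jv_1,v_2,Jv_2\}$ is a basis of $T_q\CP$, replacing $J$ by $-J$ negates two of the four vectors, a change of basis with determinant $+1$, so $J$ and $-J$ induce the \emph{same} orientation. Indeed the whole twistor fiber $\pi^{-1}(q)\cong\PP^1$ parametrizes orthogonal complex structures of a single orientation class (the one opposite to the standard $J$, as the paper notes), so no fiberwise invariant distinguishes the two preimages. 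Consequently the decomposition $S\cap\pi^{-1}(U)=S^{+}\sqcup S^{-}$ is not established by your argument, and your ``Main obstacle'' paragraph does not fix this; Remark~\ref{remarkintersectionj} says nothing about the global topology of the covering, which is precisely what is at stake (an unramified double cover with no fixed points under the deck involution can perfectly well be connected). What is needed is a topological argument that the cover is trivial. The paper obtains the two-component splitting of $S_A\setminus\pi^{-1}(\Dd_0)$ by applying the generalized Jordan Curve Theorem (Bredon, Theorem~VI.8.8) to $S_A$ viewed as the blow-up of $\CP$ at three points, and then observes that the two components are swapped by $j$. An alternative route, closer in spirit to yours, is to note that $\Dd_0$ is a circle, hence of real codimension $3$ in $\CP$, so $U=\CP\setminus\Dd_0$ is simply connected and every covering of $U$ is trivial; this gives the same splitting $S^{+}\sqcup S^{-}$, after which the rest of your argument goes through.
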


\begin{proof}
First, we observe that $S_A\sm\pi^{-1}(\Dd_0)$ has two connected components.
This follows directly from the generalized Jordan Curve Theorem~\cite[Theorem
  VI.8.8]{bredon}, given that $S_A$ is the blow-up of $\CP$ at three points.
The components are interchanged by $j$. Either one projects bijectively
onto $\CP\sm\Dd_0$ and induces the OCS $J_S$.
\end{proof}

\subsection{Toric non-real (1,1)-surfaces}
In this subsection, we shall describe the twistor discriminant locus for a
surface $S_A$ with $A$ in the diagonal form \eqref{Aone} with
$\la\in\CC\sm\RR$. This condition on $\la$ ensures that $S_A$ is smooth but not
$j$-invariant.

\begin{theorem}\label{diagonaltheorem}
Let $S_A$ be the surface defined by \eqref{Aone} with $\la\in\CC\sm\RR$. Then
the twistor projection $\pi$ restricted to $S$ is a degree 2 cover of $\CP$
whose branch locus $\Rr\subset\CP$ is the zero set of
\[ R(q) = \{(|q_0|^2(\la-1)+|q_1|^2\la+|q_2|^2)^2-4|q_0|^2|q_2|^2(\la-1)=0\}.\]
Moreover, $S$ contains no twistor fibers, i.e.\ $\Dd_0=\es$.
\end{theorem}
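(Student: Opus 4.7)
The proof plan mirrors the structure of the proof of Theorem \ref{nostro}, exploiting that the algebraic setup is identical but the non-reality of $\lambda$ forces qualitatively different conclusions.

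First I would recall the setup. For any $q\in\CP$, the preimage $\pi^{-1}(q)\cap S_A$ corresponds to pairs $(p,\ell)\in\FF$ satisfying
\[ p\ell=0,\qquad pA\ell=0,\qquad q\ell=0,\qquad pq^*=0.\]
As in the proof of Theorem \ref{nostro}, eliminating $\ell$ shows $p$ must lie on the conic $\Cc:\,pCp^\vee=0$, where $C$ is the same matrix
\[ C=\left(\begin{array}{ccc}0&q_2&-\la q_1\\ q_2&0&(\la-1)q_0\\ -\la q_1&(\la-1)q_0&0\end{array}\right),\]
together with the linear condition $pq^*=0$. For generic $q$, this is the intersection of a line and a conic, giving two points, so $\pi_{|S_A}$ is a degree $2$ cover.

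Next I would prove $\Dd_0=\es$. A twistor fiber $\pi^{-1}(q)$ lies in $S_A$ iff the line $q^*$ is a component of $\Cc$, which forces $\Cc$ reducible, i.e.\ $\det C=2\la(\la-1)q_0q_1q_2=0$. Since $\la\ne 0,1$, one of $q_0,q_1,q_2$ must vanish, and the reducibility of the corresponding component of $\Cc$ combined with $pq^*=0$ leads exactly to the three alternatives in \eqref{qqq}:
\[ |q_1|^2\la+|q_2|^2=0,\qquad |q_2|^2-|q_0|^2(\la-1)=0,\qquad |q_1|^2\la+|q_0|^2(\la-1)=0.\]
Here is where non-reality enters. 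In each case, taking imaginary parts and using $\Im\la\ne 0$ and $\Im(\la-1)=\Im\la\ne 0$ forces every modulus appearing to vanish, so the real parts then force the remaining modulus to vanish as well. This contradicts $q\ne 0$ in $\CP$, so no twistor fiber is contained in $S_A$.

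Finally I would determine $\Rr$. Since $\Dd_0=\es$, a point $q\in\Rr$ corresponds to tangency of the line $q^*$ to the (now necessarily irreducible) conic $\Cc$. Tangency is expressed by \eqref{cond-tang}, namely that the $4\times 3$ matrix
\[\left(\begin{array}{c}q^*\times C\\\hline \bar q\end{array}\right)\]
has rank at most $2$. Computing its $3\times 3$ minors gives a single non-trivial relation (after clearing the factor $q_0q_1q_2$, which we may assume non-zero since those loci were handled above). This is the computation already performed in the proof of Theorem \ref{nostro}; I would redo it keeping track of signs in the complex case to obtain
\[ R(q)=\bigl(|q_0|^2(\la-1)+|q_1|^2\la+|q_2|^2\bigr)^{\!2}-4|q_0|^2|q_2|^2(\la-1)=0.\]
The main obstacle is not conceptual but bookkeeping: the tangency minor calculation must be redone carefully (the real case used $\la>0$ to force both terms to have the same sign and hence $\Rr=\es$, whereas here the two terms are genuinely complex and can cancel), and one should check that $R(q)=0$ genuinely cuts out a non-empty locus in $\CP$ so that the branch locus is non-trivial, consistent with the degree $2$ covering.
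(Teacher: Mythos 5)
Your proposal follows essentially the same route as the paper: both defer the first steps to Theorem \ref{nostro}, use the irreducible-conic/tangent-line picture to get the degree-2 covering, and argue $\Dd_0=\es$ by observing that in each of the three reducible cases the imaginary part of the quadratic condition forces all coordinates of $q$ to vanish (the paper compresses this into the phrase ``as $\la_1\ne0$, we have no solution''). Your extra caution about redoing the tangency minors with signs, and about checking $R(q)=0$ is non-empty, is well placed but does not change the argument; the paper likewise simply cites the Theorem \ref{nostro} computation for the branch locus.
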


\begin{proof}
Inevitably, we follow the same first steps of the proof of
Theorem~\ref{nostro}.

The surface $S_A$ contains the fiber $\pi^{-1}(q)$ if and only if $q_0=0$ and
$|q_1|^2\la+|q_2|^2=0$ or $q_1=0$ and $|q_2|^2-|q_0|^2(\la-1)=0$ or $q_2=0$ and
$|q_1|^2\la+|q_0|^2(\la-1)=0$.  But in all three cases, as $\la_1\ne0$, we
have no solution $q\in\CP$.

We now want to study the branch locus. As in the proof of Theorem \ref{nostro},
if $q_0q_1q_2=0$ there is no branch locus.  Hence for the remainder, assume
$q_0q_1q_2\ne0$. Again, following the proof of Theorem \ref{nostro}, we obtain
that the branch locus of the map $\pi_{|S_A}$ is given $R(q)=0$ where $R(q)$ is
defined as in the statement of the theorem.
\end{proof}

In the following corollary we give a geometric description of the set $\Rr$.

\begin{corollary}
Let $S_A$ be a $(1,1)$-surface as in
Theorem~\ref{diagonaltheorem}. Let $\alpha=\alpha_0+i\alpha_1$ be a
square root of $\lambda -1$ and $\beta=\beta_0+i\beta_1$ a square root
of $\la$, such that $\alpha_1\beta_0>0$ and
$\frac{\alpha_0\beta_0+\alpha_1\beta_1}{\alpha_1}>0$. Then the branch
locus of $\pi_{|S_A}$ is
$$
\Rr=\{q\in\CP\mid |q_0|\alpha-i|q_1|\beta-|q_2|=0\}.
$$
In particular, the twistor discriminant locus of $S_{A}$ is a smooth
$2$-dimensional real torus.
\end{corollary}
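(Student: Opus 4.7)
The plan is to start from the explicit equation $R(q)=0$ obtained in Theorem~\ref{diagonaltheorem} and factor it into linear forms in $|q_0|,|q_1|,|q_2|$, then invoke the toric $T^2$-action to identify the locus as a $2$-torus.

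First, introduce $\alpha,\beta\in\CC$ with $\alpha^2=\la-1$ and $\beta^2=\la$ (with the sign convention specified in the statement). Set $x=|q_0|$, $y=|q_1|$, $z=|q_2|$, so
\[
R(q) = \bigl(x^2\alpha^2+y^2\beta^2+z^2\bigr)^{2}-4x^{2}z^{2}\alpha^{2}
= \bigl((x\alpha-z)^{2}+y^{2}\beta^{2}\bigr)\bigl((x\alpha+z)^{2}+y^{2}\beta^{2}\bigr).
\]
Each bracket is a difference of squares over $\CC$, so $R(q)=0$ is equivalent to the vanishing of one of the four linear forms
\[
\pm x\alpha\pm i\,y\beta - z = 0.
\]

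Next I would examine which of these four forms admits a solution with $x,y,z\ge 0$. Writing $\alpha=\alpha_{0}+i\alpha_{1}$ and $\beta=\beta_{0}+i\beta_{1}$, separating real and imaginary parts in $x\alpha-iy\beta-z=0$ gives
\[
x\alpha_{0}+y\beta_{1}=z,\qquad x\alpha_{1}=y\beta_{0},
\]
so $y=x\alpha_{1}/\beta_{0}$ and $z=x(\alpha_{0}\beta_{0}+\alpha_{1}\beta_{1})/\beta_{0}$. The two hypotheses $\alpha_{1}\beta_{0}>0$ and $(\alpha_{0}\beta_{0}+\alpha_{1}\beta_{1})/\alpha_{1}>0$ are exactly what is needed to force $y,z>0$ (using that $\alpha_{1}$ and $\beta_{0}$ share a sign to transfer the second inequality to a statement about $(\alpha_{0}\beta_{0}+\alpha_{1}\beta_{1})/\beta_{0}$). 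Running the same computation on the other three sign combinations, the imaginary part forces $x=y=0$ (and hence $z=0$) in two cases, and the real part yields a negative value of $z$ in the remaining case; thus none of them contributes. This confirms that $\Rr$ is cut out by the single equation $|q_{0}|\alpha-i|q_{1}|\beta-|q_{2}|=0$.

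Finally, to obtain the torus structure, I would pass to the moment map
\[
\mu\colon\CP\to\Delta,\qquad [q_{0}:q_{1}:q_{2}]\mapsto (|q_{0}|^{2},|q_{1}|^{2},|q_{2}|^{2})/\|q\|^{2},
\]
for the standard $T^{2}$-action. The preceding analysis shows that the set of allowable triples $(x,y,z)$ forms a single ray in $\RR_{>0}^{3}$, so $\mu(\Rr)$ is a single point in the \emph{interior} of $\Delta$ (because $x,y,z>0$). Since the $T^{2}$-fiber over an interior point of $\Delta$ is a smooth $2$-torus and $\Rr$ is manifestly $T^{2}$-invariant (the defining equation only involves absolute values of the coordinates), $\Rr$ coincides with this fiber and is therefore a smooth real $2$-torus, as claimed.

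The main obstacle is really the bookkeeping in the sign analysis: one must check carefully that the hypotheses on the square roots $\alpha,\beta$ select exactly one of the four linear factors, and that the solution on that factor lies in the open positive octant (so that $\mu(\Rr)$ is interior and the fiber is smooth of maximal dimension). Once that sign analysis is in hand, the torus statement is immediate from toric geometry.
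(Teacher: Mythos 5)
Your proposal is correct and follows essentially the same route as the paper: factor $R(q)$ over $\CC$ into the four linear forms in $|q_0|,|q_1|,|q_2|$, separate each into real and imaginary parts, and use the two sign hypotheses on $\alpha,\beta$ to rule out three of the four factors, leaving exactly $|q_0|\alpha-i|q_1|\beta-|q_2|=0$. Your closing appeal to the moment map for the torus statement is also consistent with the paper, which derives the torus from the same explicit linear relations among $|q_0|,|q_1|,|q_2|$ and records the $T^2$-invariance interpretation in the remark immediately following.
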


\begin{proof}
First of all notice that, since $\la\in\CC\sm\RR$, as $\alpha^2=\la-1$ and
$\beta^2={\la}$, then $\alpha_0\alpha_1\ne0$ and
$\beta_0\beta_1\ne0$. Moreover up to changes of signs of $\alpha$ and $\beta$
we can always reduce to the assumptions of the statement. Given that,
the proof is smoothly obtained thanks to the following computation:
\begin{align*}
R(q)&=(|q_0|^2(\la-1)+|q_1|^2\la+|q_2|^2)^2-4|q_0|^2|q_2|^2(\la-1)\\
&=(|q_0|^2(\la-1)+|q_1|^2\la+|q_2|^2-2|q_0||q_2|\alpha)(|q_0|^2(\la-1)+|q_1|^2\la+|q_2|^2+2|q_0||q_2|\alpha)\\
&=((|q_0|\alpha-|q_2|)^2+|q_1|^2\la)((|q_0|\alpha+|q_2|)^2+|q_1|^2\la)\\
&=(|q_0|\alpha-i|q_1|\beta-|q_2|)(|q_0|\alpha+i|q_1|\beta-|q_2|)(|q_0|\alpha-i|q_1|\beta+|q_2|)(|q_0|\alpha+i|q_1|\beta+|q_2|).
\end{align*}
We claim that only the first term of this factorization vanishes. 
Indeed, 
setting
\begin{align*}
R_{--}(q)=|q_0|\alpha-i|q_1|\beta-|q_2|,\quad&\quad R_{+-}(q)=|q_0|\alpha+i|q_1|\beta-|q_2|,\\
R_{-+}(q)=|q_0|\alpha-i|q_1|\beta+|q_2|,\quad&\quad R_{++}(q)=|q_0|\alpha+i|q_1|\beta+|q_2|,
\end{align*}
And splitting into real and imaginary part, we get that $R_{**}(q)=0$ ($*=+,-$) if and only if
$$
\begin{cases}
|q_0|\alpha_0\mp i|q_1|\beta_1\pm|q_2|=0\\
|q_0|\alpha_1\pm i|q_1|\beta_0=0
\end{cases}
$$
By looking at the second equation, we get that $R_{-*}(q)=0$ admits solutions and $R_{+*}(q)=0$ doesn't, since $\alpha_1\beta_0>0$.
Moreover, we have that $|q_0|=|q_1|\frac{\beta_0}{\alpha_1}$ and hence, the first equation can be written as
$$
|q_1|\frac{\alpha_0\beta_0+\alpha_1\beta_1}{\alpha_1}\pm|q_2|=0.
$$ Therefore, $R_{--}(q)=0$ admits solutions and $R_{-+}(q)=0$ doesn't, since
$\frac{\alpha_0\beta_0+\alpha_1\beta_1}{\alpha_1}>0$.
\end{proof}

\begin{remark}
In the previous corollary, we saw by direct computations that the set
$\Rr$ is a torus. This fact can also be seen because $\Rr$ is
invariant under the torus action
$(\vartheta_1,\vartheta_2)\cdot[q_0:q_1:q_2] =
[q_0:e^{i\vartheta_1}q_1,e^{i\vartheta_2}q_2]$, for any
$\vartheta_1,\vartheta_2\in[0,2\pi)$.
\end{remark}

Theorems~\ref{nostro} and~\ref{diagonaltheorem} can be considered as
\textit{toy models} to study the most general case. We will see now that the
general analysis of the twistor discriminant locus of a smooth $(1,1)$ surface
is highly non-trivial.  In particular we decided to split the analysis of the
twistor discriminant locus into two parts: first we study the set $\Dd_0$ of
twistor fibers contained in a smooth $(1,1)$ surface, then we give Cartesian
equations for the set $\Rr$.

\section{Counting twistor lines in a surface}\label{infinity}

In this section, we give a first upper bound on the number of twistor lines
contained in a surface $S$ which is not $j$-invariant, based on a Bezo\'ut type
method. For higher bidegrees, the bounds are not likely to be optimal. In
particular, we shall prove in Corollary \ref{2 rette} that a non $j$-invariant
$(1,1)$ surface contains at most two twistor lines, and in this case the bound
is actually sharp.

First of all, notice that if $S\in|\Of(a,b)|$, then $j(S)\in|\Of(b,a)|$.

The next result gives a first bound on the number of $(1,1)$-curves contained
in a non $j$-invariant surface $S$.

\begin{proposition}\label{number-lines}
Fix $a,b\ge 0$ and an integral $S\in |\Of(a,b)|$. If $j(S)\ne S$, then
$S\cap j(S)$ contains at most $a^2+ab+b^2$ integral curves of bidegree $(1,1)$.
\end{proposition}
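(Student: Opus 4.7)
The strategy is to compute the total bidegree of the 1-cycle $S \cap j(S)$ inside the Chow ring of $\FF$, and then observe that each integral $(1,1)$-component of this cycle contributes at least $(1,1)$ to that total.

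First I would show that $j(S) \in |\Of(b,a)|$. If $S$ is cut out by a bihomogeneous polynomial $f(p,\ell) \in \Pp_{a,b}$, then a point $(p,\ell)$ lies in $j(S)$ exactly when $f(\ell^*, p^*) = 0$, which is equivalent to the vanishing of $\overline{f}(\ell, p)$, a bihomogeneous polynomial of bidegree $(b,a)$. Since $j$ is a diffeomorphism, it also preserves integrality. The assumption $j(S) \neq S$ thus guarantees that $S$ and $j(S)$ are distinct integral surfaces, so they meet properly in a scheme of pure dimension one.

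Next, using the generators $H_1 = c_1(\Of(1,0))$ and $H_2 = c_1(\Of(0,1))$ of $\mathrm{Pic}(\FF)$, I would expand
\[ [S] \cdot [j(S)] = (aH_1 + bH_2)(bH_1 + aH_2) = ab\,H_1^2 + (a^2+b^2)\, H_1 H_2 + ab\,H_2^2.\]
Proposition~\ref{intersezione} evaluates the four triple products on $\FF$ and therefore pins down the bidegrees of the three 1-cycle classes appearing here: $H_1^2$ has bidegree $(0,1)$ (it is a fiber of $\pi_1$, obtained as $\Hm \cap H_{m'}$), $H_2^2$ has bidegree $(1,0)$ (a fiber of $\pi_2$), and $H_1 H_2$ has bidegree $(1,1)$ (a curve of the family $\Vv$). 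Summing with the coefficients above yields
\[ \bdeg(S \cap j(S)) \;=\; ab\,(0,1) + (a^2+b^2)\,(1,1) + ab\,(1,0) \;=\; (a^2+ab+b^2,\; a^2+ab+b^2).\]

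Finally, I would decompose the effective 1-cycle $S \cap j(S) = \sum_i m_i C_i$ into integral components $C_i$ with multiplicities $m_i \geq 1$, and of bidegrees $(d_1^i, d_2^i) \in \NN^2$. Every integral $(1,1)$-component contributes at least $(1,1)$ to the total bidegree, and all other integral components contribute non-negative amounts. Comparing first coordinates of both sides shows that the number of $(1,1)$-components is bounded by $a^2 + ab + b^2$, as required. The main subtle point is confirming that no component of $S$ is contained in $j(S)$, so that the scheme-theoretic intersection really does have pure codimension two and its class in the Chow group is the product $[S]\cdot[j(S)]$; this is immediate from the integrality of $S$ together with $j(S) \neq S$. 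Aside from this, the argument is a direct B\'ezout-type calculation and no computation on the twistor side is needed.
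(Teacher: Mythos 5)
Your proof is correct and takes essentially the same route as the paper: both compute the bidegree of the $1$-cycle $S\cap j(S)$ via the triple intersection products of Proposition~\ref{intersezione} and then bound the count of integral $(1,1)$-components by comparing with the total bidegree $(a^2+ab+b^2,\,a^2+ab+b^2)$. Your packaging of the three $1$-cycle classes $H_1^2, H_1H_2, H_2^2$ by their bidegrees $(0,1)$, $(1,1)$, $(1,0)$ is a slightly tidier bookkeeping of the same B\'ezout calculation the paper performs by evaluating the two triple products $\Of(a,b)\cdot\Of(b,a)\cdot\Of(1,0)$ and $\Of(a,b)\cdot\Of(b,a)\cdot\Of(0,1)$ separately.
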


\begin{proof}
Since $S$ is integral and $j(S)\ne S$, then $S\cap j(S)$ is a finite union of
(possibly non-reduced) curves, that is $$S\cap j(S) = \sum m_iC_i$$ with $m_i$
positive integers, and $C_i$ an irreducible curve with bidegree $(E_i,F_i)$ for
any $i$.  Recall that $E_i=\deg(\pi_1(C_i))\deg(\pi_{1|C_i})$ and
$F_i=\deg(\pi_2(C_i))\deg(\pi_{2|C_i})$.  Then $$\sum
m_i\deg(\pi_{1|C_i})=\Of(a,b)\cd \Of(b,a)\cd \Of(1,0)$$ as well as
$$\sum m_i\deg(\pi_{2|C_i})=\Of(a,b)\cd \Of(b,a)\cd \Of(0,1).$$
But we have. 
\begin{multline*}
  \Of(a,b)\cd \Of(b,a)\cd \Of(1,0) =
  (a\Of(1,0)+b\Of(0,1))\cd(b\Of(1,0)+a\Of(0,1))\cd \Of(1,0)\\
  =(ab\Of(1,0)\cd \Of(1,0)+a^2\Of(1,0)\Of(0,1)+
  b^2\Of(1,0)\Of(0,1)+ab\Of(0,1)\Of(0,1))\cd \Of(1,0),
\end{multline*}
and
\begin{multline*}
  \Of(a,b)\cd \Of(b,a)\cd \Of(0,1) =
  (a\Of(1,0)+b\Of(0,1))\cd(b\Of(1,0)+a\Of(0,1))\cd \Of(0,1)\\
  =(ab\Of(1,0)\cd \Of(1,0)+a^2\Of(1,0)\Of(0,1)+
  b^2\Of(1,0)\Of(0,1)+ab\Of(0,1)\Of(0,1))\cd \Of(0,1).
\end{multline*}

Therefore, by Proposition \ref{intersezione}, we have
\[\Of(a,b)\cd \Of(b,a)\cd \Of(1,0) =
\Of(a,b)\cd \Of(b,a)\cd \Of(0,1)=a^2+ab+b^2.\]

Hence $S$ contains at most $a^2+ab+b^2$ integral curves of bidegree $(1,1)$
(just setting $E_i=F_i=1$).
\end{proof}

By means of Lemma~\ref{p2}, we are able to prove the following
proposition that will be used to refine the previous bound.

\begin{proposition}\label{oo1}
  Let $a,b,c,d$ be positive integers, $S\in |\OF(a,b)|$ and $D\in |\Oo_S(c,d)|$.
  Then $H^0(\Oo_S)=H^0(\Oo_D)=\CC$, i.e. every holomorphic function on
$D$ is constant. Thus $S$ and $D$ are connected.
\end{proposition}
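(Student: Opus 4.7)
The plan is to run the two standard ideal-sheaf sequences and feed everything through the vanishing statements in Lemma \ref{p2}. Since $H^0(\Oo_X)=\CC$ forces $X$ to have no non-trivial global idempotents and hence to be connected, once the two equalities are proved, connectedness of $S$ and $D$ follows immediately.

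For $H^0(\Oo_S)=\CC$, I would use the structure sequence
\[
0 \to \OF(-a,-b) \to \OF \to \Oo_S \to 0,
\]
which comes from $S\in|\OF(a,b)|$. Taking cohomology, the relevant fragment is
\[
H^0(\OF(-a,-b)) \to H^0(\OF) \to H^0(\Oo_S) \to H^1(\OF(-a,-b)).
\]
Lemma \ref{p2}(i) gives $h^0(\OF(-a,-b))=0$ (since $-a<0$), and Lemma \ref{p2}(ii) gives $h^1(\OF(-a,-b))=0$ (since $-a\le 0$ and $-b\le 0$). Combined with $h^0(\OF)=1$, this squeezes $H^0(\Oo_S)\cong H^0(\OF)=\CC$.

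For $H^0(\Oo_D)=\CC$, the divisor sequence for $D\subset S$ reads
\[
0 \to \OF(-c,-d)\otimes\Oo_S \to \Oo_S \to \Oo_D \to 0,
\]
so I need $H^0(\OF(-c,-d)\otimes\Oo_S)=0$ and $H^1(\OF(-c,-d)\otimes\Oo_S)=0$. Both are extracted from the restriction sequence
\[
0 \to \OF(-c-a,-d-b) \to \OF(-c,-d) \to \OF(-c,-d)\otimes\Oo_S \to 0.
\]
For the $H^0$ vanishing, Lemma \ref{p2}(i) kills both $h^0(\OF(-c-a,-d-b))$ and $h^0(\OF(-c,-d))$ outright. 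For the $H^1$ vanishing, Lemma \ref{p2}(ii) gives $h^1(\OF(-c,-d))=0$, and Lemma \ref{p2}(iii) (the case $a\le-2$ and $b<0$, which applies because $-c-a\le -2$ and $-d-b<0$ under our positivity assumptions) gives $h^2(\OF(-c-a,-d-b))=0$. The long exact sequence then forces $H^1(\OF(-c,-d)\otimes\Oo_S)=0$.

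Plugging these into the divisor sequence yields $H^0(\Oo_D)\cong H^0(\Oo_S)=\CC$, completing the proof. The only place I would need to be slightly careful is checking that the hypothesis $a\le -2$ in Lemma \ref{p2}(iii) really is satisfied by $-c-a$; since $c\ge 1$ and $a\ge 1$, we indeed have $-c-a\le -2$. I do not anticipate any genuine obstacle: everything is bookkeeping of exact sequences against Lemma \ref{p2}, and the positivity of all four integers $a,b,c,d$ is exactly what is needed to put the relevant twists into the vanishing ranges.
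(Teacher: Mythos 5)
Your proof is correct and follows essentially the same route as the paper, which obtains your ``restriction sequence'' by tensoring~\eqref{eqoo1} with $\OF(-c,-d)$. One small logical gap: to deduce $H^0(\OF(-c,-d)\otimes\Oo_S)=0$ from the restriction sequence, killing both $h^0$ terms does not suffice; the long exact sequence only yields an injection $H^0(\OF(-c,-d)\otimes\Oo_S)\hookrightarrow H^1(\OF(-c-a,-d-b))$, so you also need $h^1(\OF(-c-a,-d-b))=0$, which again follows from Lemma~\ref{p2}(ii) since both twists are nonpositive. Incidentally, you are slightly more careful than the paper here: the paper proves only $H^1(\Oo_S(-c,-d))=0$, giving surjectivity of the restriction $H^0(\Oo_S)\to H^0(\Oo_D)$ (and tacitly needing $D\neq\varnothing$), whereas your $H^0$ vanishing additionally secures injectivity.
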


\begin{proof}
Consider the exact sequence
\begin{equation}\label{eqoo1}
0 \to \OF(-a,-b)\to \OF\to \Oo_S\to 0
\end{equation}
Since, by Lemma~\ref{p2}, $H^0(\OF) =\CC$ and $H^1(\OF(-a,-b))=0$, we
get $H^0(\Oo_S)=\CC$. Tensoring \eqref{eqoo1} by $\OF(-c,-d)$ and
using Lemma~\ref{p2} we also get that $H^1(\Oo_S(-c,-d))=0$.

Hence by the exact sequence
\begin{equation}\label{eqoo2}
0 \to \Oo_S(-c,-d)\to \Oo_S\to \Oo_D\to 0
\end{equation}
we also get $H^0(\Oo_D) =\CC$.
\end{proof}

Recall that a twistor line is a $j$-invariant integral curve of bidegree
$(1,1)$. A surface of bidegree $(1,0)$ or $(0,1)$ contains exactly one twistor
line. As a consequence of the previous two results, we get the following bounds
on the number of twistor lines contained in a surface which is not
$j$-invariant.

\begin{corollary}\label{number-twistorlines}
Fix $a,b> 0$ and an integral $S\in |\Of(a,b)|$. If $S\ne j(S)$, then $S$
contains at most $a^2+ab+b^2-1$ twistor lines.
\end{corollary}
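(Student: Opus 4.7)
The plan is to combine the raw intersection-number bound of Proposition \ref{number-lines} with the connectedness statement of Proposition \ref{oo1} to shave off one unit.

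First I would set $D:=S\cap j(S)$, viewed as an effective divisor on $\FF$. Any twistor line $T\subset S$ is $j$-invariant, so $T=j(T)\subset j(S)$, and therefore $T\subset D$. Writing $D=\sum m_iC_i$ as in the proof of Proposition \ref{number-lines}, the computation carried out there gives
\[D\cdot\Of(1,0)=\Of(a,b)\cdot\Of(b,a)\cdot\Of(1,0)=a^2+ab+b^2,\]
and the analogous identity with $\Of(0,1)$. Each integral $(1,1)$-curve $C_i$ contributes exactly $1$ to each of these intersection numbers, so the total number of bidegree $(1,1)$ components (counted with multiplicity $m_i$) is at most $a^2+ab+b^2$.

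To gain the final $-1$, I would argue by contradiction, assuming that $S$ contains $N=a^2+ab+b^2$ distinct twistor lines $T_1,\dots,T_N$. Each $T_k$ appears in $D$ with some multiplicity $m_k\ge1$, and contributes $m_k$ to $D\cdot\Of(1,0)$. Since $\sum_k m_k\le a^2+ab+b^2=N$, we must have $m_k=1$ for every $k$, and moreover the sum is already saturated, so $D$ has no other components (any extra integral component would have bidegree at least $(1,0)$ or $(0,1)$ and would force one of the two intersection numbers to exceed $a^2+ab+b^2$). Consequently $D=T_1\cup\cdots\cup T_N$.

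Now I invoke Proposition \ref{oo1} applied to $S\in|\Of(a,b)|$ and the divisor $D\in|\Oo_S(b,a)|$ (note $a,b>0$), which tells us that $D$ is connected. On the other hand, distinct twistor lines are fibers of $\pi$ over distinct points of $\CP$, hence pairwise disjoint; since $N=a^2+ab+b^2\ge3>1$, the disjoint union $T_1\cup\cdots\cup T_N$ is disconnected. This contradiction yields the bound $N\le a^2+ab+b^2-1$. The main subtlety is the book-keeping step showing that $N$ twistor lines already exhaust both intersection totals, ruling out any extra components (including fibers of $\pi_1$ or $\pi_2$); once this is in place, the connectedness of $D$ delivers the improvement for free.
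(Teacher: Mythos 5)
Your proof is correct and follows essentially the same strategy as the paper: use Proposition \ref{number-lines} for the crude bound $a^2+ab+b^2$, assume equality, observe that $D=S\cap j(S)$ must then consist precisely of these twistor lines (pairwise disjoint, hence disconnected), and contradict the connectedness of $D$ from Proposition \ref{oo1}. The only difference is that you spell out the book-keeping step — why saturation of both intersection numbers forces $D$ to be exactly the union of the twistor lines, with no extra components of bidegree $(1,0)$, $(0,1)$ or higher — whereas the paper simply asserts that $D$ is then "the union of $a^2+ab+b^2\ge 2$ pairwise disjoint curves." Your extra detail is welcome, since an unaccounted-for extra component could in principle connect the twistor lines and spoil the contradiction.
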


\begin{proof}
Notice that if $L$ is a twistor line contained in $S$, then $L\in S\cap
j(S)$. Hence by Proposition \ref{number-lines}, we know that the number of
twistor lines contained in $S$ is at most $a^2+ab+b^2$.  We prove now that $S$
does not contain exactly $a^2+ab+b^2$ twistor lines. Assume that this is the
case.  Then we would have that $S\cap j(S)$ is the union of $a^2+ab+b^2\ge 2$
pairwise disjoint curves.  In particular $S\cap j(S)$ is not connected.  But by
Proposition \ref{oo1}, noting that $D=S\cap j(S)\in|\Oo_S(b,a)|$, we have that
$S\cap j(S)$ is connected.  This gives a contradiction and concludes the proof.
\end{proof}

\begin{corollary}\label{2 rette}
Let $S\in|\Of(1,1)|$. If $S\ne j(S)$, then $S$ contains at most $2$ twistor
lines.
\end{corollary}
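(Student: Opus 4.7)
The plan is straightforward: this corollary is simply the specialization $a=b=1$ of Corollary \ref{number-twistorlines}. The arithmetic gives $a^2+ab+b^2-1 = 1+1+1-1 = 2$, which is precisely the desired bound. So strictly speaking, no new argument is needed.

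To make the reasoning self-contained, I would quickly retrace the chain of facts used. Since $S\ne j(S)$ and both $S$ and $j(S)$ are integral surfaces of bidegree $(1,1)$ (applying $j$ swaps the two indices, but here $a=b=1$), their intersection $S\cap j(S)$ is a curve, namely an effective divisor in $|\Oo_S(1,1)|$. By Proposition \ref{intersezione}, the triple intersection product $\OF(1,1)\cdot\OF(1,1)\cdot\OF(1,0) = 3$ and similarly for $\OF(0,1)$, so $S\cap j(S)$ has total bidegree $(3,3)$ and therefore decomposes into at most $3$ integral components of bidegree $(1,1)$ (this is the $a^2+ab+b^2=3$ appearing in Proposition \ref{number-lines}).

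Every twistor line $L\subset S$ satisfies $j(L)=L$, hence $L\subset S\cap j(S)$, so the number of twistor lines is bounded by the number of $(1,1)$ components of $S\cap j(S)$, which is at most $3$. The remaining step is to exclude the possibility of exactly $3$ twistor lines. If this happened, $S\cap j(S)$ would be the disjoint union of three $(1,1)$-curves (since distinct twistor fibers $L_{q,q^*}, L_{q',q'^*}$ are disjoint by the computation preceding Remark \ref{remarkintersectionj}, or alternatively because they project to distinct points of $\CP$ via $\pi$), hence disconnected. But Proposition \ref{oo1} applied to $D := S\cap j(S)\in |\Oo_S(1,1)|$ gives $H^0(\Oo_D)=\CC$, so $D$ is connected, a contradiction.

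There is no real obstacle here: the only point of care is recognizing that the bound of $3$ integral $(1,1)$-components cannot be attained simultaneously with $j$-invariance of each component, which is precisely the connectedness argument already packaged in Corollary \ref{number-twistorlines}. Sharpness (i.e.\ that the bound $2$ is actually attained) is not part of the statement and would require exhibiting an explicit example, which is deferred to the subsequent sections of the paper.
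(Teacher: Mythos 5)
Your proof is correct and follows exactly the paper's route: the statement is the specialization $a=b=1$ of Corollary~\ref{number-twistorlines}, which the paper leaves implicit, and your retracing of the chain (Proposition~\ref{number-lines} for the bound of $a^2+ab+b^2=3$, then Proposition~\ref{oo1} for connectedness of $S\cap j(S)$ to exclude the case of $3$ disjoint twistor lines) matches the paper's proof of that corollary.
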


\subsection{Configurations of lines}
\label{configuration-sec}
The bound from Corollary~\ref{number-twistorlines} demonstrates an interesting
phenomenon. As a non $j$-invariant $S\in|\Of(1,1)|$ contains at most $2$ twistor
lines, then, by Proposition~\ref{number-lines} the intersection $S\cap j(S)$
contains at most just one integral $(1,1)$ curve $C$ non $j$-invariant, but
this is impossible, otherwise $S\cap j(S)$ would contains $j(C)$ as well
contradicting the bound (see Remark~\ref{remarkintersectionj}). This means that
in this case, as $S\cap j(S)$ is connected, there should be another
$j$-invariant set of curves of possible bidegrees $(1,0)$ or $(0,1)$ that
connect the two twistor fibers. The next examples illustrate exactly this
situation.

\begin{example}\label{primo ex}
Consider the $(1,1)$ surface $S=S_A$ in $\FF$ defined by the matrix
$$
A=\left(\begin{matrix}
0&3&0\\
0&1&0\\
0&0&2
\end{matrix}
\right).
$$
Then $A$ defines a smooth non $j$-invariant $(1,1)$-surface, and 
$$
S=\{(p,\ell)\in\FF\mid (3\ell_0+\ell_1)p_1+2\ell_2p_2=0\},\quad
j(S)=\{(p,\ell)\in\FF\mid (3p_0+p_1)\ell_1+2p_2\ell_2=0\}.
$$
Therefore the intersection $S\cap j(S)$ in $\FF$ is given by the system
\begin{equation}\label{systemexample}
\begin{cases}
\ell_0p_0+\ell_1p_1+\ell_2p_2=0\\
(3\ell_0+\ell_1)p_1+2\ell_2p_2=0\\
(3p_0+p_1)\ell_1+2p_2\ell_2=0
\end{cases}
\end{equation}

By interpreting this as a linear system with $l$ unknowns and $p$ as
parameters, we get that it admits a solution in $\PP^\vee$ if and
only if the determinant of the associated matrix vanishes, i.e.\ when
\begin{equation}\label{firstdet}
-3p_2(p_0-p_1)(2p_0-p_1)=0.
\end{equation}
By analyzing the factors of \eqref{firstdet} we obtain that
\begin{itemize}
\item if $p_2=0$, then $\ell_1=0$ and $\ell_0=0$; hence, in this case
  \eqref{systemexample} has the following set of solutions:
  $\{(p,\ell)=([*:*:0],[0:0:1])\}$, that is $\pi_2^{-1}([0,0,1])$;
\item if $p_0=0=p_1$, then we obtain the solution set
  $\pi_2^{-1}([0,0,1])$;
\item if $p_1=p_0\ne0$, then we get that
  $\ell_0-\ell_1=0=2\ell_0p_0+\ell_2p_2$, but these determine the
  twistor fiber $L_{[1:-1:0],[1:-1:0]}$;
\item if $p_1=2p_0\ne0$, in analogy with the previous case, we get
  $2\ell_0-\ell_1=0=5\ell_0p_0+\ell_2p_2=0$, which determine the
  twistor fiber $L_{[2:-1:0],[2:-1:0]}$.
\end{itemize}

Oberve that $j(\pi_2^{-1}([0,0,1]))=\pi_1^{-1}([0,0,1])$, so the set of
solutions of \eqref{systemexample} is indeed $j$-invariant. Of course,
$L_{[1:-1:0],[1:-1:0]}$ and $L_{[2:-1:0],[2:-1:0]}$ are disjoint but these two lines should
intersect $\pi_2^{-1}([0,0,1])\cup\pi_1^{-1}([0,0,1])$. We now
compute these mutual intersections.  First, recall from
Remark~\ref{remarkintersectionj} that
$\pi_2^{-1}([0,0,1])\cap\pi_1^{-1}([0,0,1])=\es$.  By direct inspection we get
the following intersections
$$
\begin{cases}
A=L_{[1:-1:0],[1:-1:0]}\cap \pi_2^{-1}([0,0,1])=([1:1:0],[0:0:1])\\
B=L_{[1:-1:0],[1:-1:0]}\cap \pi_1^{-1}([0,0,1])=([0:0:1],[1:1:0])\\
C=L_{[2:-1:0],[2:-1:0]}\cap \pi_2^{-1}([0,0,1])=([1:2:0],[0:0:1])\\
D=L_{[2:-1:0],[2:-1:0]}\cap \pi_1^{-1}([0,0,1])=([0:0:1],[1:2:0])
\end{cases}
$$

\begin{figure}
  \includegraphics[scale=0.6]{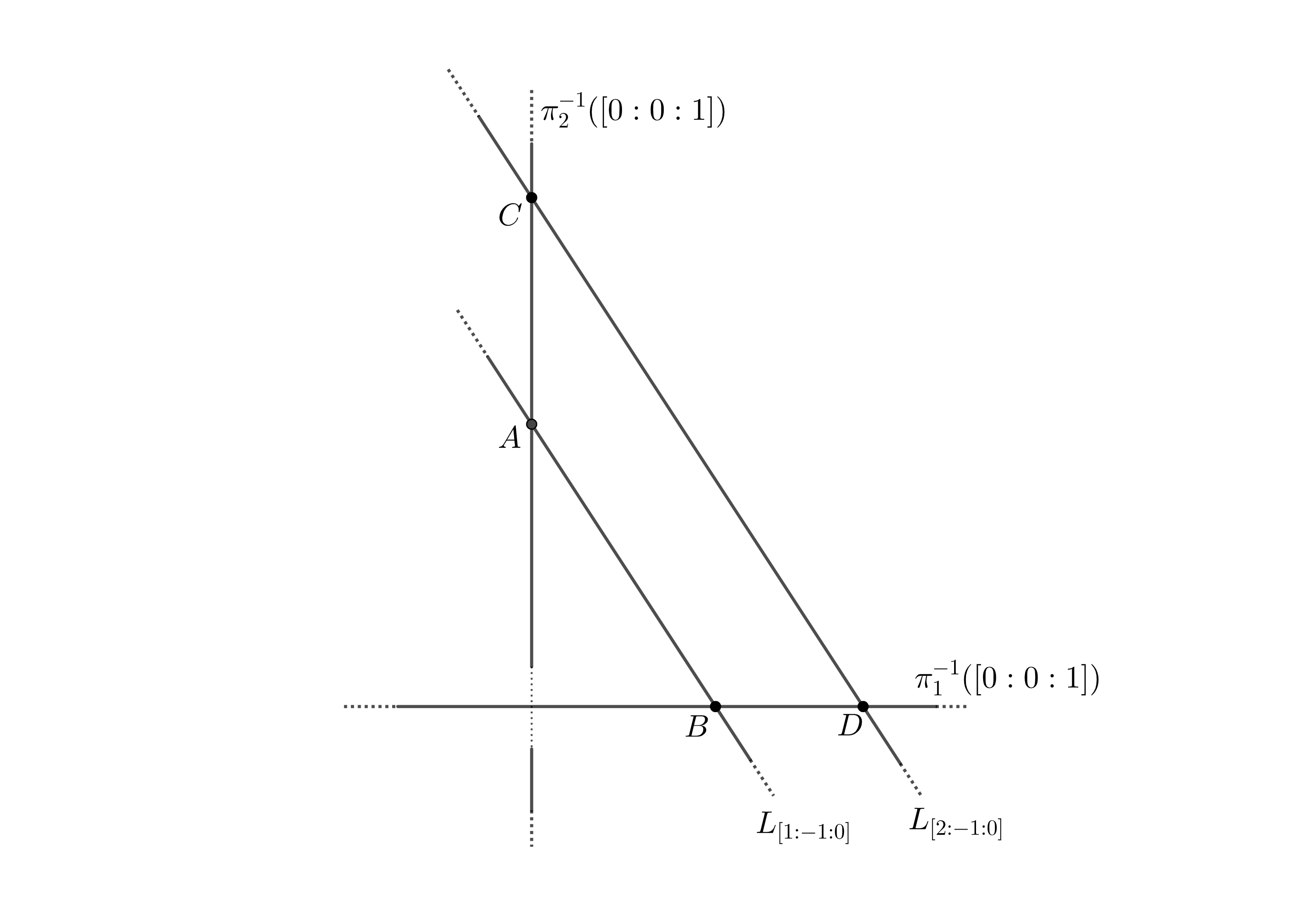}
  \vspace{-20pt}
\caption{Intersection of $S$ and $j(S)$. Notice how $S\cap j(S)$ is a
  connected set of two $(1,1)$-curves and two canonical (skew)
  fibers.}
\end{figure}
\end{example}

We now give two more examples of smooth $(1,1)$-surfaces, the first contains just one
twistor fiber, and the second none.

\begin{example}\label{secondo ex}
Consider now the $(1,1)$ surface $S$ in $\FF$ defined by the matrix
$$
A=\left(\begin{matrix}
0&2\sqrt2&0\\
0&1&0\\
0&0&2
\end{matrix}
\right).
$$
With analogous computations as in the previous example, we can see
that the intersection $S\cap j(S)$ is given by the union of the
twistor line {$L_{[1:-\sqrt2:0],[1:-\sqrt2:0]}$} and the two fibers
$\pi_1^{-1}([0,0,1])$ and $\pi_2^{-1}([0,0,1])$.
\end{example}

\begin{example}\label{terzo ex}
Consider finally the $(1,1)$ surface $S$ defined by
$$
A=\left(\begin{matrix}
0&2&0\\
0&1&0\\
0&0&2
\end{matrix}
\right).
$$
In this case we get that the intersection $S\cap j(S)$ is given by the
union of two lines {$L_1=L_{[1+i:-1:0],[1+i:-1:0]}$} and
{$L_2=L_{[1-i:-1:0],[1-i:-1:0]}$} and the two fibers $\pi_1^{-1}([0,0,1])$
and $\pi_2^{-1}([0,0,1])$. Note that $j(L_1)=L_2$ and there are no
twistor lines contained in $S$.
\end{example}

We conclude this section with the following easy consequence of
Proposition~\ref{number-lines}.
\begin{corollary}\label{a=b}
Let $S\in|\Of(a,b)|$. If $S$ contains infinitely many twistor lines,
then $S$ is $j$-invariant and hence $a=b$.
\end{corollary}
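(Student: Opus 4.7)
The plan is to invoke Proposition~\ref{number-lines} together with two easy facts: any twistor line is a $j$-invariant integral $(1,1)$-curve (see Remark~\ref{remarkintersectionj}), and $j(S)\in|\Of(b,a)|$ whenever $S\in|\Of(a,b)|$, as observed at the start of this section.

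I would first treat the integral case. If $L\subset S$ is a twistor line then $L=j(L)\subset j(S)$, and therefore $L\subset S\cap j(S)$. Were $j(S)\ne S$, Proposition~\ref{number-lines} would bound the number of integral curves of bidegree $(1,1)$ inside $S\cap j(S)$ by the finite quantity $a^2+ab+b^2$, contradicting the hypothesis that $S$ contains infinitely many twistor lines. Hence $j(S)=S$; and since the two divisors lie in $|\Of(a,b)|$ and $|\Of(b,a)|$ respectively, equality of divisors forces $(a,b)=(b,a)$, i.e.\ $a=b$.

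For a general, possibly reducible $S\in|\Of(a,b)|$ I would use a pigeonhole argument: decomposing $S$ into irreducible components, at least one integral component $S_0$ must contain infinitely many twistor lines, and the argument above applies to $S_0$ to give $j(S_0)=S_0$ with $\bdeg(S_0)=(c,c)$ for some $c$. The main technical wrinkle is then the passage from $j$-invariance of a distinguished component to $j$-invariance of the full divisor $S$: the remaining components must be paired into $j$-orbits, each such orbit contributing symmetrically to the bidegree and therefore preserving the equality $a=b$. I expect this bookkeeping for reducible $S$ to be the only delicate point; everything else is a direct application of Proposition~\ref{number-lines}.
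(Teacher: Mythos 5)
Your argument for integral $S$ is correct and is precisely the proof the paper has in mind: the twistor lines, being $j$-invariant integral $(1,1)$-curves, all lie in $S\cap j(S)$; if $j(S)\ne S$, Proposition~\ref{number-lines} caps the number of integral $(1,1)$-curves in $S\cap j(S)$ at $a^2+ab+b^2$, a contradiction; and $j(S)=S$ with $S\in|\Of(a,b)|$, $j(S)\in|\Of(b,a)|$ forces $a=b$.

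The reducible case, however, does not work, and the ``bookkeeping'' you hope for cannot be made to go through. Nothing in the hypothesis pairs the \emph{other} components of $S$ into $j$-orbits: they are entirely unconstrained, since all the twistor lines may lie in the single $j$-invariant component $S_0$. Concretely, take $S_0$ a smooth $j$-invariant $(1,1)$-surface (it contains a circle's worth of twistor fibers by Theorem~\ref{nostro}) and $q\in\CP$, and set $S=S_0+\qH\in|\Of(1,2)|$. Then $S$ contains infinitely many twistor lines, yet $j(S)=S_0+H_{q^*}\ne S$ and $(a,b)=(1,2)$, so neither conclusion of the corollary holds. The statement is therefore only correct under the integrality hypothesis carried over from Proposition~\ref{number-lines} (as in Corollary~\ref{number-twistorlines}, where it is written explicitly); your extension to reducible $S$ should simply be dropped and the hypothesis that $S$ is integral made explicit.
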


\begin{remark}
Subsequent to the first version of this paper, the existence of integral
$j$-invariant surfaces of type $(a,a)$ containing infinitely twistor
lines for arbitrary $a > 1$ was established in the final section of
\cite{altavillaballicobrambilla}. Only if $a=1$ the surfaces
are smooth; the singular set always has dimension one if $a\ge2$.
\end{remark}

\section{Twistor projections of some non-real surfaces}\label{ramf}

In Section~\ref{infinity}, we gave upper bounds on the number of
twistor fibers contained in a non $j$-invariant integral surface and
showed with an example, that such bound is attained in the case of
$(1,1)$-surfaces.  In this section we give an explicit proof that any
non $j$ invariant smooth $(1,1)$ surface contains at most 2 twistor
lines.

{ We will need the following variations on the theme of solving quadratic
  equations:

\begin{lemma}\label{zf}
  Let $f=f_0+if_1\in\CC$. If $f_0+f_1^2\le\frac14$ then the equation
\[|z|^2+z+f=0\]
has one or two solutions
\[ z = -\tfrac12\pm \tfrac12\sqrt{1-4f_0-4f_1^2}-if_1.\]
If $f_0+f_1^2>\frac14$ then it has no solution.
\end{lemma}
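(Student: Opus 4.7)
The plan is to reduce the complex equation to a real system by writing $z=x+iy$ with $x,y\in\RR$, split into real and imaginary parts, solve the imaginary part trivially, and feed the answer into the real part to obtain a real quadratic in $x$ whose discriminant controls the solvability.

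First I would substitute $z=x+iy$ into $|z|^{2}+z+f=0$, where $f=f_{0}+if_{1}$, to obtain
\[
(x^{2}+y^{2}+x+f_{0}) \;+\; i(y+f_{1}) \;=\; 0.
\]
The vanishing of the imaginary part forces $y=-f_{1}$, which is the only place the imaginary data of $f$ enters. Substituting this into the real part gives a single real quadratic
\[
x^{2}+x+(f_{0}+f_{1}^{2})=0.
\]

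Next I would apply the usual quadratic formula to this equation, yielding
\[
x=-\tfrac12\pm\tfrac12\sqrt{1-4f_{0}-4f_{1}^{2}}.
\]
Real solutions exist precisely when the discriminant $1-4(f_{0}+f_{1}^{2})$ is nonnegative, i.e.\ when $f_{0}+f_{1}^{2}\le\tfrac14$, in which case the announced formula for $z=x+iy=x-if_{1}$ follows at once; the two solutions coincide exactly when equality holds. If instead $f_{0}+f_{1}^{2}>\tfrac14$, the quadratic in $x$ has no real root, and since $y=-f_{1}$ is forced, the original complex equation has no solution.

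There is no real obstacle here: the only subtlety is recognising that $|z|^{2}$ forces a genuinely real treatment rather than a naive complex quadratic-formula application, so the split into real and imaginary parts must be done at the outset. The rest is bookkeeping of the discriminant.
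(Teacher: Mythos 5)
Your proof is correct and follows essentially the same route as the paper, which likewise sets $z=x+iy$, reads off $y=-f_1$ from the imaginary part, and applies the quadratic formula to the resulting real quadratic in $x$. You have merely written out the bookkeeping that the paper compresses into a single sentence.
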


If we set $z=x+iy$ then obviously $y=-f_1$ and the lemma follows immediately
from the usual quadratic formula applied to $x=\Re z$.

\begin{corollary}\label{zef}
Let $e$ be a non-zero complex number, and consider the equation  
\begin{equation}\label{cxeq}
|z|^2+ez+f=0.
\end{equation}
Set $\De=|e|^4-4f_0|e|^2-4f_1^2$. Then \eqref{cxeq} admits a solution if and
only if $\De\ge0$, in which case
\[ z = -\Big(\tfrac12|e|^2\pm \tfrac12\sqrt\De +if_1\Big)e^{-1}.\]
\end{corollary}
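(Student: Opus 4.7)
The plan is to reduce Corollary~\ref{zef} to the immediately preceding Lemma~\ref{zf} by a linear change of variables that absorbs the coefficient $e$. First I would set $z = |e|^2 u/e$, which is well-defined because $e\ne 0$ and satisfies $|z|^2 = |e|^2|u|^2$ and $ez = |e|^2 u$. Dividing equation \eqref{cxeq} by $|e|^2$ then yields
\[|u|^2 + u + \frac{f}{|e|^2} = 0,\]
which is exactly the normalized equation treated in Lemma~\ref{zf}, with $f$ replaced by $g := f/|e|^2$, whose real and imaginary parts are $g_0 = f_0/|e|^2$ and $g_1 = f_1/|e|^2$.

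Next I would apply Lemma~\ref{zf} to this rescaled equation. Its solvability condition $g_0+g_1^2\le \tfrac14$ becomes, after multiplication by $4|e|^4$,
\[|e|^4 - 4f_0|e|^2 - 4f_1^2 \ge 0,\]
which is precisely $\Delta\ge 0$. When this holds, Lemma~\ref{zf} supplies
\[u \;=\; -\tfrac12 \pm \tfrac12\sqrt{1-4g_0-4g_1^2} - ig_1 \;=\; -\tfrac12 \pm \frac{\sqrt\Delta}{2|e|^2} - i\frac{f_1}{|e|^2}.\]

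Finally I would undo the substitution by computing $z = |e|^2 u/e$: multiplying the expression for $u$ first by $|e|^2$ and then by $e^{-1}$ yields
\[z \;=\; \Big({-\tfrac12|e|^2} \pm \tfrac12\sqrt\Delta - i f_1\Big)e^{-1} \;=\; -\Big(\tfrac12|e|^2 \pm \tfrac12\sqrt\Delta + i f_1\Big)e^{-1},\]
matching the stated formula, since the two choices of sign parametrize the same pair of solutions whether written $-(\,\cdots\pm\cdots\,)$ or $(\,\cdots\mp\cdots\,)$. There is essentially no obstacle here: the entire argument is one careful substitution. The only moment requiring attention is verifying that the discriminant transforms correctly under the rescaling and that the sign ambiguity survives factoring out $e^{-1}$; both are mechanical once the substitution is fixed.
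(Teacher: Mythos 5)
Your proof is correct and is essentially the paper's own argument: setting $z=|e|^2u/e$ is the same as the paper's substitution $Z=ez/|e|^2$ (indeed $u=Z$), and both reduce to Lemma~\ref{zf} by dividing through by $|e|^2$, then translate the discriminant condition and the solution formula back. The bookkeeping on $\Delta$ and the observation that the $\pm$/$\mp$ ambiguity is immaterial are both accurate.
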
  

\begin{proof}
Set $Z=ez/|e|^2$ and $F=f/|e|^2$. Dividing \eqref{cxeq} by $|e|^2$, we obtain
\[ |Z|^2+Z+F=0.\]
Lemma \ref{zf} gives 
\[ Z = -\tfrac12\pm\tfrac12\sqrt{1-4F_0-4F_1^2}-iF_1,\]
provided the square root is real. The statement follows.
\end{proof}}

\subsection{Twistor lines again}
Corollary \ref{zef} enables us to state the main result of this section in
which we establish real algebraic conditions on the coefficients of the matrix
$A$ for which the surface $S_A$ contains $2$, $1$ or no twistor fibers.

\begin{theorem}\label{twistorfibers}
Let $S=S_{A}$ be a smooth surface of bidegree $(1,1)$ such that $S\neq j(S)$
and the matrix $A$ has the form
$$A=\left(\begin{array}{ccc}
  0&a&b\\
  0&1&c\\
  0&0&\la
  \end{array}\right),$$
where $\la=\la_0+i\la_1\in\CC\sm\{0\}$ and $[a:b:c]\in\CP$. If one of the
following mutually exclusive conditions is satisfied, then $S$ contains exactly
one twistor fiber:
\begin{enumerate}
\item[(i)] $b\ne0\ne c$ and $|b|^2(1-\la)-|c|^2\la-a\bar b c=0$;
\item[(ii)] $a\ne0\ne c$ and $|a|^2(1-\la)+|c|^2{+} a \bar b c=0$;
\item[(iii)] $a\ne0\ne b$ and $|a|^2\la +|b|^2-a\bar b c=0$;
\item[(iv)] $b= 0=c$ and 
{  $|a|^4-4|a|^2(|\la|^2-\la_0)-4\la_1^2=0$};
\item[(v)] $a=0= c$ and {
$|b|^4-4(1-\la_0)|b|^2-4\la_1^2=0$;}
\item[(vi)] $a= 0= b$ and {
$|c|^4-4\la_0|c|^2-4\la_1^2=0$.}
\end{enumerate}
If one of the following mutually exclusive conditions is satisfied, then $S$
contains exactly two twistor fibers:
\begin{enumerate}
\item[(vii)] $b= 0=c$ and { $|a|^4-4|a|^2(|\la|^2-\la_0)-4\la_1^2>0$} ;
\item[(viii)] $a=0=c$ and { $|b|^4-4(1-\la_0)|b|^2-4\la_1^2>0$};
\item[(ix)] $a= 0= b$ and { $|c|^4-4\la_0|c|^2-4\la_1^2>0$.}
\end{enumerate}
\end{theorem}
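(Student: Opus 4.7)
The plan is to translate the condition $\pi^{-1}(q)\subset S_A$ into a divisibility statement for polynomials in $p$, and then to carry out a case analysis on the vanishing of $a,b,c$ mirroring (i)--(ix). Recall that $\pi^{-1}(q)=L_{q,q^*}$ can be parametrized as $\{(p,\ell):pq^*=0,\ \ell=p\times q\}$ once $q$ is fixed. Thus $\pi^{-1}(q)\subset S_A$ iff the quadratic form
\[
F(p)\;:=\;pA(p\times q)\;\in\;\CC[p_0,p_1,p_2]_2
\]
vanishes whenever $pq^*=0$, equivalently, iff the linear form $pq^*=\bar q_0p_0+\bar q_1p_1+\bar q_2p_2$ divides $F$ in $\CC[p_0,p_1,p_2]$. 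A direct expansion using the explicit shape of $A$ yields
\begin{align*}
F(p) = (bq_1-aq_2)p_0^2 - cq_0\,p_1^2 &+ (cq_1-q_2-bq_0)\,p_0p_1\\
&+ (aq_0+\la q_1)\,p_0p_2 + (1-\la)q_0\,p_1p_2.
\end{align*}
Writing $F=(pq^*)(g_0p_0+g_1p_1+g_2p_2)$ and matching coefficients of $p_ip_j$ produces a linear system of six equations in $(g_0,g_1,g_2)$; its consistency is the condition on $q$ that we seek.

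The diagonal equations $\bar q_i g_i=(\text{coeff. of }p_i^2\text{ in }F)$ drive the case split. In cases (iv)--(ix), where two of $a,b,c$ vanish, these equations force exactly two of $q_0,q_1,q_2$ to be nonzero and the third to be zero (checking that the two other ``fixed point'' choices and the ``all nonzero'' choice are incompatible with the remaining equations, because each yields a contradiction using $\la\ne 0$ or $\la\ne 1$). Using the nonzero $q_i$ to solve explicitly for $g_0,g_1,g_2$ and substituting into the three off-diagonal equations, everything collapses to a single complex relation of the form
\[
\la|z|^2+a\bar z+(\la-1)=0
\]
(and its analogues after permutation), in the ratio $z$ of the two surviving components of $q$. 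Splitting into real and imaginary parts, using $\la\notin\RR$ (forced by $S\ne j(S)$ in this subcase), gives one circle and one line (the radical axis) in $\RR^2$; a routine computation of the distance from the centre to the line shows that the number of intersection points is governed by the sign of $|a|^4-4|a|^2(|\la|^2-\la_0)-4\la_1^2$, yielding (iv)/(vii); the cases (v)/(viii) and (vi)/(ix) follow by the same argument after permuting the roles of the coordinates. This is the computation I worked out in part and it is completely parallel to the analysis of Theorem~\ref{diagonaltheorem}.

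In cases (i)--(iii), where two of $a,b,c$ are nonzero, the diagonal equations rule out any $q$ with a zero coordinate (by the same kind of case-by-case contradiction). One then fixes a ratio among two components of $q$ from one of the diagonal equations, and the remaining equations determine the third coordinate uniquely; consistency of the overcompleted system is exactly the identity stated in (i), (ii) or (iii) respectively (after a short but careful algebraic manipulation using the identities $|q_i|^2=q_i\bar q_i$). Because the twistor fiber is then pinned down uniquely, precisely one such fiber exists under each of these conditions.

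The \emph{main obstacle} is bookkeeping: ensuring that the case partition is exhaustive, that each subcase of $(q_0,q_1,q_2)$--vanishing is correctly ruled in or out, and that the discriminant computation in (iv)--(ix) is carried out cleanly. The $T^2$-invariance of the set of $q$ giving rise to a twistor fiber simplifies things considerably, reducing each orbit to a normalized representative; together with the mutual exclusivity check among (i)--(ix) already visible in the theorem statement, this keeps the proof from branching uncontrollably. Finally, the bound of Corollary~\ref{2 rette} guarantees that we never need to exhibit more than two solutions, so the positive-discriminant regime in (vii)--(ix) is automatically sharp.
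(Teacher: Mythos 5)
Your approach is essentially the same as the paper's. Both hinge on the observation that $\pi^{-1}(q)\subset S_A$ is equivalent to the linear form $pq^*$ dividing the quadratic form $p\mapsto pA(p\times q)$; both then carry out a case analysis and, in the two-of-$a,b,c$-vanishing cases, apply Corollary~\ref{zef} to a circle equation $|z|^2+ez+f=0$ to count solutions.

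The one genuinely different ingredient is how the case split is organized. The paper computes the associated conic $C$ of the form $\det(p\mid pA\mid q)=0$, writes its Gram matrix $H$, and factors $\det H$ into three lines $r_1,r_2,r_3$ in $q$-space; the three alternatives $q\in r_i$ then give the three blocks of conditions. You instead write $F=(pq^*)\,(g_0p_0+g_1p_1+g_2p_2)$ and let the three diagonal equations $\bar q_ig_i = (\text{coeff.\ of }p_i^2\text{ in }F)$ drive the partition into vanishing patterns of $(q_0,q_1,q_2)$. These bookkeeping schemes lead to the same equations; your route is slightly more elementary in that it avoids introducing $H$ and its determinant. (A minor technical note: the conditions you obtain this way, e.g.\ $(\la-1)|q_0|^2+aq_0\bar q_1+\la|q_1|^2=0$ for $q_2=0$, differ from the intermediate formulas displayed in the paper's proof by a conjugation of a single term; the two families of conditions are related by $z\mapsto\bar z$ and produce the same discriminant, so the final statement of the theorem is the same. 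This is worth keeping in mind when you compare intermediate lines of your computation with the paper's.)

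Two small gaps in the proposal as written. First, the appeal to $T^2$-invariance to ``reduce each orbit to a normalized representative'' is not available here: for a non-diagonal $A$, the surface $S_A$ is not $T^2$-invariant (only the surfaces with $a=b=c=0$ are), so there is no orbit simplification to be had in cases (i)--(iii). Second, you defer the mutual exclusivity check among (i), (ii), (iii) to the ``already visible'' part of the theorem statement, but this is not purely a statement of the theorem — the paper explicitly verifies at the end of its proof that no two of the three identities in (i)--(iii) can hold simultaneously for $[a:b:c]\in\CP$ (by subtracting the equations and observing that each pair forces $|a|^2+|b|^2+|c|^2=0$). You should include that short argument, both to justify exclusivity and to conclude that a surface with $abc\ne0$ carries at most one twistor fiber.
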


\begin{proof}
  Let $S_A$ be the surface defined by the matrix
$$A=\left(\begin{array}{ccc}
  0&a&b\\
  0&1&c\\
  0&0&\la
  \end{array}\right)$$
with $a,b,c,\la\in\CC$, and $\la=\la_0+i\la_1\neq0,1$.

As in the proof of Theorem~\ref{nostro}  we impose
  \begin{equation}\label{condizione-per-conica}
\det(p\mid pA\mid q) = 0,
  \end{equation}
  and we get the conic $C$
  $$
(q_2)p_0p_1+(-\la q_1+cq_2)p_0p_2+  (-aq_2)p_1^2+(-cq_0+bq_1)p_2^2+((\la-1)q_0+aq_1-bq_2)p_1p_2=0.
  $$
Intersecting it with the line $pq^*=0$, we get generically two
intersection points which give the two pre-images of $q$. The conic $C$ is
singular when the matrix
$$H=\left(\begin{array}{ccc}
  0&q_2&-\la q_1+cq_2\\
  q_2&-2aq_2&(\la-1)q_0+aq_1-bq_2\\
  -\la q_1+cq_2&(\la-1)q_0+aq_1-bq_2&2(-cq_0+bq_1)
 \end{array}\right)$$
has vanishing determinant, i.e.\
 $$
 2 q_2 ((1-\la)q_1+cq_2)(\la q_0-a\la q_1+(ac-b)q_2)=0.
$$
This equation describes the union of three lines 
\begin{align*}
r_1:\qquad &q_2=0,\\
r_2:\qquad &(1-\la)q_1+cq_2=0,\\
r_{3}:\qquad &\la q_0-a\la q_1+(ac-b)q_2=0.
\end{align*}

 We analyze now the three cases in which $\det H$ vanishes as those points $q$
 form the set where $\pi^{-1}(q)\subset S_{A}$. Recall that the only missing
 condition is $p\bar q=0$.

If $q\in r_1$, i.e.\ $q_2=0$, then the conic is reducible as
   $$
p_2(   (-\la q_1)p_0+((\la-1)q_0+aq_1)p_1+(-cq_0+bq_1)p_2)=0,
   $$
and again as in the proof of Theorem~\ref{nostro}, the surface contains the line $L_q$ only if 
$$
\rk\left(\begin{array}{ccc}
  \bar  q_0&\bar q_1&0\\
  0&0&1
 \end{array}\right)=1
\quad\mbox{or}\quad
\rk\left(\begin{array}{ccc}
  \bar  q_0&\bar q_1&0\\
   -\la q_1&(\la-1)q_0+aq_1&-cq_0+bq_1
 \end{array}\right)=1.
$$
Clearly the first matrix has always rank equal to 2, so we are left to study the second one.
As the first column is always different from zero, this has rank equal to one if and only if
\begin{equation}\label{system1}
\begin{cases}
(\la-1)|q_0|^2+a\bar q_0q_1+\la|q_1|^2=0\\
\bar q_0(-cq_0+bq_1)=0.\\
\end{cases}
\end{equation}
It is clear that, as $\la\ne0,1$, then $q_0=0$ if and only if $q_1=0$, but as
we are in the case in which $q_2=0$, this option is not possible. Hence we have
$bq_1=cq_0$.  We have that $b=0$ if and only if $c=0$. Hence, if $b\ne0\ne c$
we obtain a unique solution whenever $b$ and $c$ are such that
\begin{equation}\label{eqbc}
-(1-\la)|b|^2+\la|c|^2+a\bar b c=0.
\end{equation}

Assume now that {$b=c=0$}, then we are left to deal with the equation
$(\la-1)|q_0|^2+a\bar q_0q_1+\la|q_1|^2=0$. By setting $z=q_1/q_0$, the last
equation is equivalent to
\begin{equation}\label{equation1}
|z|^2+\frac{a}{\la}z+\frac{\la-1}{\la}=0.
\end{equation}


{Now, if $a\ne0$, }it is sufficient to apply Corollary \ref{zef}: in this case
we have
$$\De=\tfrac1{|\la|^4}\left(|a|^4-4|a|^2(|\la|^2-\la_0)-4\la_1^2)\right)
$$

If $\De>0$, then we have two solutions, while if $\De=0$, then we have
a unique solution.

If $q\in r_{2}$ we obtain
\begin{equation}\label{eqac}
|a|^2(1-\la)+|c|^2{+} a \bar b c=0,
\end{equation}
while, if $q\in r_{3}$ we get
\begin{equation}\label{eqab}
|a|^2\la +|b|^2-a\bar b c=0.
\end{equation}
Now, the analysis of these two conditions is completely analogous to the first case and we find cases
$(2), (5), (8)$ or $(3), (6), (9)$, respectively.

Finally, as the conditions $a=0=b$, $a=0=c$ and $b=0=c$ are mutually exclusive, we only need to check that
the twistor line obtained in the case $a\neq0$, $b\neq0$ and $c\neq 0$ is at most one.
This means that no two of the conditions given in Formulas~\eqref{eqbc}, \eqref{eqac} and~\eqref{eqab} 
can coexist,
i.e., the following systems have no solution $[a:b:c]\in\CP$:
$$
\begin{cases}
(1-\la)|b|^2-\la|c|^2-a\bar b c=0\\
-|a|^2(1-\la)-|c|^2{-} a \bar b c=0,
\end{cases}\quad
\begin{cases}
(1-\la)|b|^2-\la|c|^2-a\bar b c=0\\
|a|^2\la +|b|^2-a\bar b c=0,
\end{cases}\quad
\begin{cases}
-|a|^2(1-\la)-|c|^2{-} a \bar b c=0\\
|a|^2\la +|b|^2-a\bar b c=0.
\end{cases}\quad
$$
But these three systems are equivalent to
$$
\begin{cases}
(1-\la)|b|^2-\la|c|^2-a\bar b c=0\\
(|a|^2+|b|^2+|c|^2)(1-\la)=0,
\end{cases}\quad
\begin{cases}
(1-\la)|b|^2-\la|c|^2-a\bar b c=0\\
\la(|a|^2 +|b|^2+|c|^2)=0,
\end{cases}\quad
\begin{cases}
-|a|^2(1-\la)-|c|^2{-} a \bar b c=0\\
|a|^2 +|b|^2+|c|^2=0,
\end{cases}\quad
$$
respectively, and hence, none of these admit a solution $[a:b:c]\in\CP$.
\end{proof}

\begin{remark}
We point out that Example \ref{primo ex} provides an example of a surface
containing two twistor lines (case $(7)$ in the previous Proposition), Example
\ref{secondo ex} provides and example of surface containing one twistor line
(case $(4)$) and Example \ref{terzo ex} gives an example of surface without
twistor lines.
\end{remark}

\subsection{Ramification again}

Theorem \ref{twistorfibers} analysed the locus $\Dd_0=\{z\in\CP\mid
\pi^{-1}(z)\subset S\}$, (cf.\ Definition \ref{def-discriminant}). We know that
it consists of zero, one, or two points, depending on conditions on
$a,b,c,\la$. Now we want to analyse the branch locus of a smooth non
$j$-invariant $(1,1)$-surface.

\begin{theorem}
Let $S_A$ be a smooth non $j$-invariant surface of bidegree $(1,1)$, such
that $A$ is given by \eqref{forma-canonica}. Then the twistor
projection $\pi$ restricted to $S$ is a degree 2 cover of $\CP$ whose branch
locus $\Rr\subset\CP$ is the zero set of
\[\begin{array}{rcl} R(q)
&=& \Big\{|q_0|^2(\la-1)+|q_1|^2\la+|q_2|^2+a\bar q_0q_1+b\bar q_0q_2
+c\bar q_1q_2\Big\}^2\\[2pt]
&& -4\Big\{|q_0|^2|q_2|^2(\la-1)
  +a\big[c\bar q_0q_2(|q_0|^2+|q_1|^2(\la+1)+|q_2|^2)
  -\bar q_0q_1|q_2|^2(\la-1)\big]\\
  && \hskip30pt
  -b\bar q_0q_1|q_1|^2+c\bar q_1q_2(|q_0|^2\la+|q_1|^2\la+|q_2|^2)\Big\}.
\end{array}\]
\end{theorem}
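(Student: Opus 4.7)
The strategy mirrors the proofs of Theorems~\ref{nostro} and~\ref{diagonaltheorem}, now carried through for a general upper-triangular $A$ as in~\eqref{forma-canonica}. Fix $q\in\CP$ and recall that $\pi^{-1}(q)=L_{q,q^*}=\qH\cap H_{q^*}$ is a curve of bidegree $(1,1)$. Since $S_A\in|\Of(1,1)|$, Proposition~\ref{intersezione} gives that $\pi^{-1}(q)\cap S_A$ has length $2$, so $\pi|_{S_A}$ is a degree-$2$ cover and $q\in\Rr$ precisely when the two preimages coincide.

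To identify those preimages, I proceed as in the proof of Theorem~\ref{nostro}: the equations $p\ell=0$, $pA\ell=0$, $q\ell=0$ (cutting out $S_A\cap\qH$) are three linear equations in $\ell$, so admit a non-trivial solution if and only if
\[\det(p\mid pA\mid q)=0.\]
Expanding with $A$ in the form \eqref{forma-canonica} yields a conic $\Cc_q\colon p\,C(q)\,p^\vee=0$ for a symmetric $3\times3$ matrix $C(q)$ whose entries are linear in $q$ and in $(a,b,c,\la)$; the specialization $a=b=c=0$ reproduces the matrix appearing in the proof of Theorem~\ref{nostro}. The remaining twistor constraint $pq^*=0$ then cuts out a line $L_{q^*}$ in the $p$-plane, and $\pi^{-1}(q)\cap S_A=\Cc_q\cap L_{q^*}$.

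The branch locus is therefore the locus of $q\in\CP$ for which $L_{q^*}$ is tangent to $\Cc_q$. The cleanest route to an explicit formula is via a discriminant: in the generic chart $\bar q_0\ne 0$, substitute $p_0=-(\bar q_1p_1+\bar q_2p_2)/\bar q_0$ into $p\,C(q)\,p^\vee=0$, clear denominators by multiplying through by $\bar q_0^2$, and extract the discriminant of the resulting binary quadratic in $(p_1,p_2)$. The patches $\bar q_1\ne 0$ and $\bar q_2\ne 0$ yield compatible expressions; equivalently, one can package the tangency as the vanishing of a $3\times 3$ minor of the $4\times 3$ matrix whose first three rows form $q^*\times C(q)$ and whose last row is $\bar q$, in parallel with the proofs of Theorems~\ref{nostro} and~\ref{diagonaltheorem}. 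The special loci $q_0q_1q_2=0$, where $\Cc_q$ may be reducible, are treated separately by direct inspection, exactly as in those proofs.

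The main obstacle is purely bookkeeping: regrouping the resulting degree-$4$ real polynomial into the compact form $R(q)=B(q)^2-4\,\Gamma(q)$, with
\[ B(q)=|q_0|^2(\la-1)+|q_1|^2\la+|q_2|^2+a\bar q_0q_1+b\bar q_0q_2+c\bar q_1q_2 \]
and $\Gamma(q)$ as displayed in the statement. Two sanity checks guide the regrouping: specialising $a=b=c=0$ with $\la\in\RR$ must give $\Rr=\es$ (Theorem~\ref{nostro}), while $a=b=c=0$ with $\la\in\CC\sm\RR$ must recover the discriminant of Theorem~\ref{diagonaltheorem}. These pin down the ``diagonal'' contributions to $B$ and $\Gamma$, after which the upper-triangular entries of $A$ enter predictably through the Hermitian pairings $a\bar q_0q_1$, $b\bar q_0q_2$, $c\bar q_1q_2$. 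As a final consistency check, each $q\in\Dd_0$ classified in Theorem~\ref{twistorfibers} automatically satisfies $R(q)=0$, since at such $q$ the conic $\Cc_q$ is reducible and contains $L_{q^*}$.
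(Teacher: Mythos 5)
Your proposal follows essentially the same route as the paper: obtain the conic $\Cc_q$ from $\det(p\mid pA\mid q)=0$, impose tangency of $\Cc_q$ with the line $pq^*=0$, and convert this into the vanishing of the $3\times 3$ minors of a $4\times 3$ matrix built from the gradient $C(q)p^\vee$ (equivalently $q^*\times C(q)$) and the row $\bar q$. Your primary ``substitute-and-take-discriminant in the chart $\bar q_0\ne 0$'' phrasing is just the coordinate incarnation of the same rank condition — the paper effectively performs this, finding $M_3=-\bar q_0 R$ — so the two arguments coincide, and your sanity checks (recovering Theorems~\ref{nostro} and~\ref{diagonaltheorem} at $a=b=c=0$) are sound.
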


\begin{proof}
Following the proof of Theorem~\ref{nostro}, we have that the branch locus is
given by the set of points $q\in\CP$ such that
\begin{equation}
  \rk\left(\begin{array}{cc}
    \bar q_0 & q_2p_1+(-q_1\la +cq_2)p_2\\
    \bar q_1 & q_2p_0-2aq_2p_1+(q_0(\la-1)+aq_1-bq_2)p_2\\
    \bar q_2 & (-q_1\la +cq_2)p_0+(q_0(\la-1)+aq_1-bq_2)p_1+2(-cq_0+bq_1)p_2\\
  \end{array}\right)=1
\end{equation}
which, together with $\bar q\cd p=0$ gives  the following linear system in $[p_1:p_2:p_{3}]\in\CP$
\begin{equation}\label{sistemone}
\begin{cases}
 (\bar q_0 q_2)p_0+(-2a\bar q_0 q_2-\bar q_1 q_2)p_1+(|q_0|^2(\la-1)+a\bar q_0 q_1-b\bar q_0 q_2+|q_1|^2\la-c\bar q_1 q_2)p_2=0\\
(-\bar q_0 q_1\la +\bar q_0 q_2c)p_0+(|q_0|^2(\la-1)+a\bar q_0q_1-b\bar q_0 q_2-|q_2|^2)p_1\\
\qquad+(-2c|q_0|^2+2b\bar q_0 q_1+\bar q_2 q_1\la-c|q_2|^2)p_2=0\\
(-|q_1|^2\la+\bar q_1q_2c-|q_2|^2)p_0+(\bar q_1 q_0(\la-1) +a| q_1|^2-b\bar q_1 q_2+2a| q_2|^2)p_1\\
\qquad+( -2c\bar q_1 q_0+2b|q_1|^2-q_0\bar q_2(\la-1)-a\bar q_2q_1+b|q_2|^2)p_2=0 \\
\bar q_0p_0+\bar q_1 p_1 + \bar q_2p_2=0.
 \end{cases}
\end{equation}
Such system admits a non-zero solution if and only if the associated matrix 
\def\vpd{\vphantom{\lower6pt\hbox{1}}}
\def\vpu{\vphantom{\raise6pt\hbox{1}}}
\def\entry#1#2{\begin{array}{c}\vpu#1\\#2\vpd\end{array}}
\[ M=\left(\begin{array}{c|c|c}
  \bar q_0 q_2&-2a\bar q_0 q_2-\bar q_1 q_2&
  \entry{|q_0|^2(\la-1)+a\bar q_0 q_1-b\bar q_0 q_2}
    {+|q_1|^2\la-c\bar q_1 q_2}\\\hline
    -\bar q_0 q_1\la +\bar q_0 q_2c&
    \entry{|q_0|^2(\la-1)+a\bar q_0q_1}{-b\bar q_0 q_2-|q_2|^2}
    & \entry{-2c|q_0|^2+2b\bar q_0 q_1}{+\bar q_2 q_1\la-c|q_2|^2}\\\hline
    -|q_1|^2\la+\bar q_1q_2c-|q_2|^2&
    \entry{\bar q_1 q_0(\la-1) +a| q_1|^2}{-b\bar q_1 q_2+2a| q_2|^2}
    & \entry{-2c\bar q_1 q_0+2b|q_1|^2-q_0\bar q_2(\la-1)}
          {-a\bar q_2q_1+b|q_2|^2}\\\hline
        \bar q_0&\bar q_1  & \bar q_2
  \end{array}\right)\]
has rank strictly less than three. This happens if and only if the determinants
of the $3\times3$ submatrices all vanish.

If we denote by $M_i$ the determinant of the submatrix obtained from $M$ by
removing the $i$th row, then we have
$$
M_1=-\bar q_2R,\quad M_2=-\bar q_1R,\quad M_{3}=-\bar q_0R,\quad M_4=0,
$$
where $R=R(q)$ is as stated.
\end{proof}

\begin{remark}
Observe that if $a=b=c=0$ and $\la\in\CC\sm\RR$ then $R(q)=0$
corresponds to the equations obtained in Theorems~\ref{nostro}
and~\ref{diagonaltheorem}.
\end{remark}

We shall conclude by investigating a case in which (to simplify matters)
  $b=c=0$, and both $a$ and $\la$ are real. In this case $S_A$ is invariant by
the action of $U(1)$ on $p_2$ and $\ell_2$ in $(p,\ell)$. This residual
symmetry enables us to exhibit the branch locus as an explicit surface of
revolution in $\RR^3$.
  
 \begin{corollary}\label{a2s}
  Suppose that $A$ is given by \eqref{forma-canonica} with $a\in\RR$,
  $a\ne0$, $b=c=0$, and $\la=2$. Then the branch locus of $\pi\colon
  S_A\to\CP$ is homeomorphic to a torus with $0$, $1$ or $2$ singular
  points.
\end{corollary}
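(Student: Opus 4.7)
The plan is to specialize the formula for $R(q)$ from the previous theorem and then exploit the residual $U(1)$-symmetry to present the branch locus as a surface of revolution whose generating curve can be analyzed explicitly. Substituting $b=c=0$, $\la=2$, and $a\in\RR$ yields
\[R(q) = \bigl(|q_0|^2+2|q_1|^2+|q_2|^2+a\bar q_0 q_1\bigr)^2
  - 4|q_2|^2\bigl(|q_0|^2-a\bar q_0 q_1\bigr).\]
On the line $q_0=0$ the function $R$ reduces to the non-negative square $(2|q_1|^2+|q_2|^2)^2$, which vanishes only off $\CP$, so all branch points lie in the affine chart $q_0=1$, where I would write $q_1=x+iy$ with $x,y\in\RR$ and $r=|q_2|\ge 0$. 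A direct computation gives $\Im R = 2ay\bigl(1+2(x^2+y^2)+3r^2+ax\bigr)$, and substituting the vanishing of the second factor back into $\Re R=0$ will produce a sum of non-negative terms including $a^2y^2$, forcing $y=r=0$ and reducing to the first case. Hence $\Im R=0$ is equivalent to $y=0$, and the branch locus is cut out by the single real equation $(1+2x^2+r^2+ax)^2=4r^2(1-ax)$, manifestly invariant under the $U(1)$-action $q_2\mapsto e^{i\gamma}q_2$ (which preserves $A$ because $A$ is block-diagonal of shape $2+1$).

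Next I would regard the equation as a quadratic in $R:=r^2$,
\[R^2 + 2R(2x^2+3ax-1) + (2x^2+ax+1)^2 = 0,\]
whose roots $R_\pm(x) = (1-3ax-2x^2) \pm 2\sqrt{2x(x+a)(ax-1)}$ are real precisely when $x(x+a)(ax-1)\ge 0$. Assuming $a>0$ (the case $a<0$ is symmetric via $x\mapsto -x$), this happens on $[-a,0]\cup[1/a,\infty)$. The algebraic identity
\[(1-3ax-2x^2)^2 - 8x(x+a)(ax-1) = (2x^2+ax+1)^2,\]
checkable by direct expansion, together with Vieta's formulas (product $R_+R_-=(2x^2+ax+1)^2\ge 0$ and sum $R_++R_-=2(1-3ax-2x^2)$), implies: on $[-a,0]$ both roots are non-negative and $R_-(x)=0$ exactly when $2x^2+ax+1=0$; on $[1/a,\infty)$ the sum is negative while the product is positive, so both roots are strictly negative. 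Therefore the curve $C$ defined by the equation in the half-plane $\{(x,r):r\ge 0\}$ lies entirely in $[-a,0]\times[0,\infty)$, and is a simple closed loop obtained by joining the arcs $r=\sqrt{R_\pm(x)}$ at the endpoints $x=-a$ and $x=0$ where $R_+=R_-$. The loop meets $r=0$ precisely at the real roots of $2x^2+ax+1$, of which there are $0$, $1$, or $2$ according as $a^2<8$, $a^2=8$, or $a^2>8$.

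Finally the branch locus is the orbit of $C$ under the $U(1)$-rotation of $q_2$: over points of $C$ with $r>0$ the orbits are circles, giving locally a smooth surface of revolution, while over each of the at most two points where $r=0$ the orbit collapses to a single point and produces an isolated singularity. This presents the branch locus as the topological quotient of the smooth torus $C\times S^1$ obtained by collapsing the $k$ meridians lying over these points, yielding a torus with $k\in\{0,1,2\}$ singular points, as asserted. The main obstacle is the sign analysis of the quadratic in $R$, for which the displayed algebraic identity is essential; once that is in place, the topological conclusion is immediate.
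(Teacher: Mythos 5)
Your proof is correct and follows the same overall strategy as the paper: specialize $R(q)$, work in the affine chart $q_0=1$, split $\Im R$ and $\Re R$, show the imaginary part forces $y=0$, reduce to a profile curve in the $(x,|q_2|)$-plane, and rotate. Your treatment of the $y\ne 0$ elimination is a small variant of the paper's: after imposing the vanishing of the second factor of $\Im R$, you observe that $\Re R$ becomes a sum of non-positive terms (indeed substituting $1+ax=-(2(x^2+y^2)+3r^2)$ gives $\Re R=-8r^4-8r^2-8(x^2+y^2)r^2-a^2y^2$), forcing $y=0$; the paper instead solves for $s^2$ in $\Re R=0$ and then shows the $\Im$-factor is strictly positive. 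Both work. Where you genuinely add value is the profile-curve analysis: the paper merely asserts that
\[s^4+(4x^2+6ax-2)s^2+(1+ax+2x^2)^2=0\]
is a ``doubled lima\c con-shaped curve'' and reads off the $s=0$ intersections, whereas you treat it as a quadratic in $R=s^2$, use Vieta's formulas to show both roots are real and non-negative exactly on $[-a,0]$ (for $a>0$), and deduce that the two branches $s=\sqrt{R_\pm(x)}$ join at the discriminant zeros $x=0,-a$ to form a simple closed loop confined to $[-a,0]\times[0,\infty)$. That makes the ``rotate a circle about the axis'' step, and hence the torus/horn-torus/spindle-torus trichotomy indexed by the roots of $2x^2+ax+1$ in $[-a,0]$, genuinely rigorous rather than pictorial. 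The only minor point worth tightening is the phrase ``forcing $y=r=0$ and reducing to the first case'': what you actually get is a contradiction with $y\ne 0$, which is what you want; stating it that way would be cleaner.
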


\begin{proof}
We are assuming that $b=c=0$ and $\la=2$. Set $q_1=x+iy$ and
$q_2=u+iv$. If $q_0=0$ then
$$R(q)  = \left(2(x^2+y^2)^2+u^2+v^2\right)^2,$$
and the zero set is empty in $\CP$. So we can set $q_0=1$ and use $x,y,u,v$ as
real inhomogeneous coordinates.
We set $r=|q_{1}|$ and $s=|q_{2}|$. It follows that
\[\Im R(q) = 2ay\big(1+ax+2r^{2}+3s^{2}\big),\]
and 
\[\Re R(q) = (1+2r^2+s^2+ax)^2-a^2y^2-4s^2+4axs^2.\]
To study $R(q)=0$, we first assume $y\ne0$. The imaginary part being zero
implies that $ax<0$ and
\[\Re R(q) = (-2s^2)^2-a^2y^2-4s^2+4axs^2= 4s^4-4(1-ax)s^2-a^2y^2.\]
Now, this vanishes if and only if 
\[\ts 2s^2 = (1-ax)+\sqrt{(1-ax)^{2}+a^{2}y^{2}}.\]
Going back to the imaginary part, we obtain that
\[\ts \Im R(q)=1+ax+2r^{2}+\frac32((1-ax)+\sqrt{(1-ax)^{2}+a^{2}y^{2}}) =
\frac{5}{2}-\frac{1}{2}ax+2r^{2}+\frac{3}{2}\sqrt{(1-ax)^{2}+a^{2}y^{2}},\]
which is strictly positive and hence there are no solutions.

Setting $y=0$, gives $\Im R(q)=0$ and we are reduced to study
\begin{equation}\label{r4r2}
  \Re R(q) = s^4+(4x^2+6ax-2)s^2+(1+ax+2x^2)^2=0.
\end{equation}
This defines a doubled lima\c con-shaped curve in the $(x,s)$ plane. The
resulting surface of revolution is formed by rotating this profile around the
$x$-axis. If $s=0$ then $2x=-a\pm\sqrt{a^2-8}$. It follows that the surface is
smooth if $0<|a|<2\sqrt2$, has one singular point if $|a|=2\sqrt2$, and two if
$|a|>2\sqrt2$. The torus is not circular because we do not have the full $T^2$
symmetry.
\begin{figure}
  \includegraphics[scale=1.1]{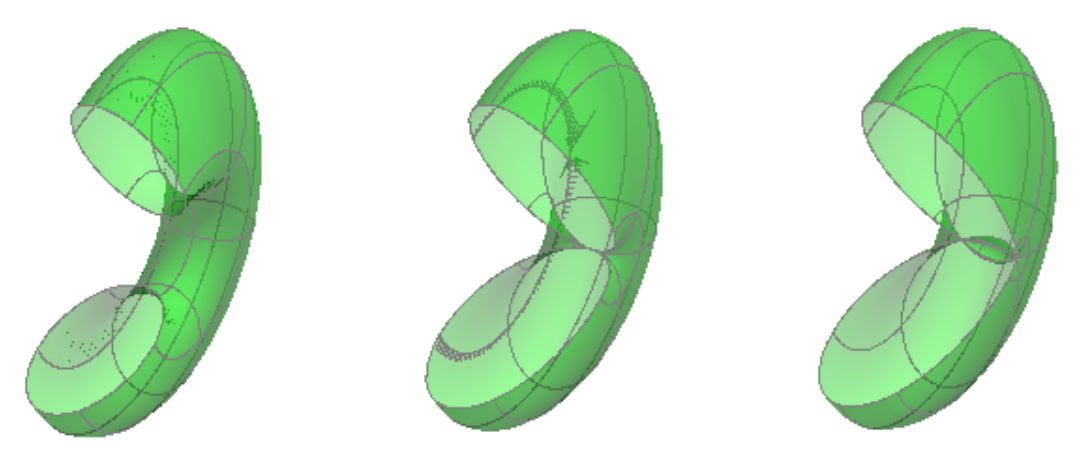}
  \vspace{10pt}
\caption{Part of the branch loci of $S_{2;a,0,0}$ for $a=1$, $a=2\sqrt2$, $a=6$}
\label{disc}
\end{figure}
\end{proof} 

\begin{remark}
  As it stands, Corollary \ref{a2s} is consistent with Theorem
  \ref{twistorfibers}, and with the examples contained in
  Section~\ref{configuration-sec}. If, on the other hand, $a=0$, we
  are back in the situation of a real $(1,1)$ surface, see Subsection
  \ref{real11}. The choice $\la=2$ places us in the second line of
  \eqref{qqq} with $0=q_1=x+iy$. This is completely consistent with
  \eqref{r4r2}, which reduces to
\[ s^4+2(2x^2-1)s^2+(2x^2+1)^2=0,\]
which has a unique solution $x=0$ and $s=1$. After rotation, this is a
circle in the $q_2=u+iv$ plane.
\end{remark}

Cut-away versions of the associated surfaces are shown in Figure
\ref{disc}. The three values of $a$ correspond to a smooth torus, a
horn torus and a spindle torus respectively
\cite[pp.\ 305--306]{GAS}. These images provide a vivid analogy with
the branch loci of quadrics in $\PP^3$ \cite{sv1}. One might
conjecture that the branch loci of more general $(1,1)$ surfaces in
$\FF$ with finitely many twistor fibers fall topologically into the
three categories illustrated by Figure \ref{disc}, but verification is
beyond the scope of the present paper.

\bigbreak

\noindent{\bf Acknowledgments.} The fourth author is grateful to Fran
Burstall and Nick Shepherd-Barron for explaining relevant facts to him. We thank the referee for many helpful comments.

\vskip20pt

\section{Bibliography}\vskip-20pt

\addtocontents{toc}{\protect\setcounter{tocdepth}{-1}}

\frenchspacing

\enddocument